\documentclass[12pt]{article}

\usepackage{geometry}
\geometry{letterpaper}
\usepackage{graphicx}
\usepackage{amssymb}
\usepackage{amsmath,tikz-cd}
\usepackage{mathtools}
\usepackage{bbm} % For blackboard bold digits
\usepackage{tikz}
\usetikzlibrary{matrix,arrows,decorations.pathmorphing}

%%%New commands
\newcommand{\R}{\mathord{\mathbb{R}}}
\newcommand{\N}{\mathord{\mathbb{N}}}

\newcommand{\B}{\mathord{\mathcal{B}}}

\newcommand{\myimplies}{\mathord{\;\Rightarrow\;}}

\newcommand{\down}{\mathord{\downarrow}}
\newcommand{\up}{\mathord{\uparrow}}

\newcommand{\ideals}{\mathop{\mathrm{Id}}}
\newcommand{\cpt}{\mathop{\mathrm{cpt}}}
\newcommand{\tbigvee}{\textstyle{\bigvee\,}}
\newcommand{\tbigcup}{\textstyle{\bigcup\,}}

\newcommand{\Irr}{\mathord{\mathrm{Irr}\kern 2pt}}
\newcommand{\Spec}{\mathop{\mathrm{Spec}}}

\newcommand{\SpecM}{\mathop{\mathrm{Spec_{Max}}}}

\newcommand{\maxx}{\mathop{\mathrm{max}}}

\newcommand{\llangle}{\mathord{\langle\kern-3pt\langle}}
\newcommand{\rrangle}{\mathord{\rangle\kern-3pt\rangle}}
\newcommand{\coz}{\mathop{\mathrm{coz}}}
\newcommand{\spec}{\mathop{\mathrm{spec}}}

\usepackage{stackengine}

%%% Environments
\usepackage{amsthm}
\theoremstyle{plain}
\newtheorem{thm}{Theorem}[subsection]
\newtheorem{lem}[thm]{Lemma}
\newtheorem{prop}[thm]{Proposition}
\newtheorem{cor}[thm]{Corollary}

\theoremstyle{definition}
\newtheorem{defn}[thm]{Definition}

\newtheorem{exmp}[thm]{Example}

\theoremstyle{remark}

\newtheorem*{rem}{Remark}

%%%%%%%%%%%%%%%%%%%%%%%%%%%%%%
%%%%%%%%%%%%%%%%%%%%%%%%%%%%%%
%%%%%%%%%%%%%%%%%%%%%%%%%%%%%%

\title{Conjunctive Join-Semilattices}

\author{
  Delzell, Charles N.\\
  \texttt{delzell@math.lsu.edu }
  \and
  Madden, James J.\\
  \texttt{mmmad@lsu.edu}
  \and
  Ighedo, Oghenetega\\
  \texttt{ighedo@unisa.ac.za}
}

\begin{document}
\maketitle
\section{Introduction}

\iffalse
The introduction should contain 
\begin{enumerate}
\item known representation theorems for join semilattices: distributive case;
\item some history on the congruence: Glivenko, Frink, Pierce; 
\item a discussion of the results of this paper and their applications
\end{enumerate}
\fi

One of the most general extensions of Stone's Representation Theorem for Boolean Algebras  concerns distributive join-semilattices, i.e., join-semilattices in which any finite cover of any element has a refinement that exactly covers that element.  Distributivity thus defined assures that a join-semilattice has sufficiently many prime ideals to represent it as the join-semilattice of all compact opens in some sober $T_0$-space in which every open is a union of compact opens.  See \cite{G}, Chapter II, section 5. 

The present paper includes a representation theorem for join-semilattices that need not be distributive but instead (or in addition) are \textit{conjunctive}.  A join-semilattice $L$ is said to be conjunctive (or have the conjunction property) if it has a top element $1$ and it satisfies the following first-order condition: for any two distinct $a,b\in L$, there is $c\in L$ such that either $a\vee c\not=1=b\vee c$ or $a\vee c=1\not=b\vee c$.  Equivalently, one can show, a join-semilattice is conjunctive if every principal ideal is an intersection of maximal ideals.   We present simple examples showing that a conjunctive join-semilattice may fail to have any prime ideals.  We show that every conjunctive join-semilattice is isomorphic to a join-closed subbase for a compact $T_1$-topology on $\maxx L$, the set of maximal ideals of $L$.  The representation is canonical, in that when applied to a join-closed subbase for a compact $T_1$-space $X$, the space produced by the representation is homeomorphic with $X$.  

 For any ideal $I\subseteq L$ there is a join-semilattice morphism $\phi:L\to M$ and a maximal ideal $J$ of $M$ such that $I=\phi^{-1}(J)$.   We say a  join-semilattice morphism $\phi:L\to M$ is  \textit{conjunctive} if $\phi^{-1}(w)$ is an intersection of maximal ideals of $L$ whenever $w$ is a maximal ideal of $M$.  We show that every conjunctive morphism between conjunctive join-semilattices is induced by a multi-valued function from $\maxx M$ to $\maxx L$.  Thus, the representation is functorial in a manner that is reminiscent of distributive lattices or rings, but with some unexpected and intriguing modifications.

The conjunction property---or more precisely its dual, the ``disjunction property''---first appeared in Wallman's 1938 paper \cite{W} as a condition on a  distributive lattice that guarantees the existence of sufficiently many maximal filters to distinguish between elements of the lattice.  This property was generalized by Pierce in his 1954 paper \cite{P}, in a way that is meaningful in any semigroup.  Simmons and Macnab \cite{S} (1978) seems to have been the first to use of the term ``conjunctive'' with the same meaning as in the present paper, but they applied it to distributive lattices, not to join-semilattices in general.  Johnstone mentions their work and uses their terminology in \cite{J84b}.   The conjunction property  is closely related to the property of subfitness, introduced in 1973 by Isbell \cite{I73} as a separation axiom for locales.  Subfitness is discuused at length in Chapter V of the book \cite{PP}, but here again, though the definition is the same, it is applied only to frames (i.e., complete lattices in which $\wedge$ distributes over all suprema).   

A conjunctive join-semilattice must have a top element, but  applications (such as those discussed in \cite{MZ}) require more generality.  We define a new property, \textit{ideal conjunctivity}, that enables us to extend the representation theory to join semilattices that do not have $1$.  Ideal conjunctivity is a generalization to join-semilattices of the notion of ``finite subfitness'' that was introduced in \cite{MZ}; see subsection \ref{mzsect}, below. 

Every join semilattice $L$ with $1$ has a smallest congruence (called $R^1(L)$) for which the quotient is conjunctive, and $R^1(L)$ is the maximal congruence in which the congruence class of $1$ is a singleton.  (This is easily deduced from results in \cite{P}.)  This situation carries over to distributive lattices and to compact frames, since $R^1(L)$ respects meets and infinite joins in these settings.  The study of this congruence in the context of point-free topology was initiated by Johnstone in \cite{J84}, where he used it to modify the lattice of ideals of a bounded distributive lattice $L$ to produce the locale of ``almost maximal ideals'' of $L$.  As is commonly done in point-free topology, Johnstone used a nucleus to represent the congruence.  (Recently,  Haykazyan [H] has studied this further.)   Banaschewski and Harting \cite{B85} considered the congruence for general compact frames, calling the quotient mapping the ``saturation,'' and Banaschewski continued the study of the saturation in \cite{B02}.      A manifestation of Pierce's theorem in frames is stated in Corollaries V.1.3.3  and V.1.4.1 of \cite{PP}. 

Here is an outline of the present paper.   We have included some expository material, especially in subsections 2.3 and 4.1, to provide continuity between our work and background literature and to fill in a few details that are not addressed in other sources. 

Section 2 begins with a review of basic definitions.  An ideal of a join-semilattice is a $\vee$-closed downset; a prime ideal is a proper ideal whose complement is a filter.  We provide an example of a finite join-semilattice with three maximal ideals and no prime ideals.  After this, we review facts about the complete lattice of all ideals of a join-semilattice.  In general, it may fail to satisfy any distributive laws, but it may be characterized as the solution to a universal mapping problem.   In the third subsection, we present Pierce's theory \cite{P} specialized to join-semilattices.  Finally, we introduce and discuss the new concept of \textit{ideal conjunctivity}, which generalizes the concept of conjunctivity to join-semilattices without top element. 

In Section 3, we prove the Representation Theorem for Conjunctive Join Semilattices and its partial generalization to ideally conjunctive join semilattices.  Then we apply the theory to prove a far-reaching generalization of a result of Martinez and Zenk concerning ``Yosida frames.''  

In Section 4, we review the representation theory for distributive join-semilattices, then show that a \textit{complete} conjunctive join-semilattice is distributive if and only if all its maximal ideals are prime.  It is an open question as to whether this is true without the completeness hypothesis.  

The final section considers two problems that relate to distributive lattices.  A base for a topological space is said to be annular if it is a lattice.  A Wallman base for a space $X$ is an annular base such that for any point $u$ in any basic open $U$, there a basic open $V$ that misses $u$ and together with $U$ covers $X$.  It is easy to show that every Wallman base is conjunctive.  We give an example of a conjunctive annular base that is not Wallman.  Finally, we examine the free distributive lattice $dL$ over a conjunctive join semilattice $L$.  In general, it is not conjunctive, but we show that $dL/R^1(dL)$ is isomorphic to 
$wL: =$ the sub-lattice of the topology of the representation space that is generated by $L$.  The passage from $L$ to $wL$ is not functorial.

The present paper demonstrates that the category of conjunctive join-semilattices and conjunctive morphisms has a rich and interesting theory and that much of the existing theory of conjunctive distributive lattices and subfit frames springs from the properties of these more-elementary structures.  The foundational role of join-semilattices in the theory of frames and locales was highlighted in \cite{JT}, where the authors suggested the following fruitful analogy:
$$\text{ frames : rings :\kern 1.5pt: complete join-semilattices : abelain groups.}$$ Few authors, however, have built on this.  We hope that our work will inspire others to explore the role of conjunctive join-semilattices in pointfree topology and other areas.  We have included a number of unanswered questions that appear at the ends of Sections 1 through 4.

\section{Join-Semilattices}

%%%%%%%%%%%%%%%%%%%%%%%%
%%%%%%%%%%%%%%%%%%%%%%%%
\subsection{Basic Facts}
%%%%%%%%%%%%%%%%%%%%%%%%
%%%%%%%%%%%%%%%%%%%%%%%%

A \textit{join-semilattice} is a set $L$ equipped with  an associative, commutative, idempotent binary operation  $\vee$.  $L$ is partially ordered by the relation $x\leq y$, which by definition means $x\vee y = y$.  In this order, $x\vee y$ the least upper bound of $x$ and $y$.   The largest (respectively, smallest) element of $L$, if it contains one, is denoted by $1$ or $1_L$ (respectively, $0$ or $0_L$).  We say $U\subset L$ is an \textit{up-set} if $a\in U$ and $a\leq b$ implies $b\in U$.  The up-set $\{\,b\in L\mid a\leq b\,\}$ is denoted by $\up a$.  Down-sets and $\down a$ are defined analogously.

\begin{defn} Let $L$ be a join-semilattice.  
\begin{itemize}\item[$(i)$]  We call a subset $I\subset L$ a \textit{join-semilattice-ideal} (or simply an \textit{ideal} when the context is clear) if it is a down-set and $a\vee b\in I$ whenever $a,b\in I$.

\item[$(ii)$]  We call an up-set a \textit{filter} if it is non-empty and contains a lower bound for any two of its elements.  We say an ideal $I\subseteq L$ is \textit{prime} if it is non-empty and its complement in $L$ is a filter. 

\item[$(iii)$]   If $a\in L$, then an ideal of $L$ is said to be  \textit{$a$-maximal} if it does not contain $a$, and any properly larger ideal does contain $a$.  When $L$ has a top element $1$, a $1$-maximal ideal is  called simply \textit{maximal}.  When $L$ lacks a top element, a proper ideal that is contained in no larger proper ideal is said to be \textit{maximal proper}.
\end{itemize}\end{defn}

\begin{lem}\label{lemlem} Suppose $L$ is a join-semilattice and $a\in L$.  Every proper ideal of $L$ that does not contain $a$ is contained in an $a$-maximal ideal.\end{lem}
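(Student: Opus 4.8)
The plan is to obtain the desired ideal by a standard Zorn's Lemma argument applied to the poset of ideals that lie above the given one but still avoid $a$. First I would fix a proper ideal $I_0\subseteq L$ with $a\notin I_0$, and let $\mathcal{P}$ denote the collection of all ideals $J$ of $L$ satisfying $I_0\subseteq J$ and $a\notin J$, partially ordered by inclusion. This collection is nonempty, since $I_0\in\mathcal{P}$.

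The key step is to verify that every chain in $\mathcal{P}$ has an upper bound in $\mathcal{P}$. Given a chain $\mathcal{C}\subseteq\mathcal{P}$, I would form $J^*:=I_0\cup\bigcup\mathcal{C}$ and check that $J^*\in\mathcal{P}$. As a union of down-sets it is again a down-set. For $\vee$-closure, if $x,y\in J^*$ then, since $\mathcal{C}$ is totally ordered, $x$ and $y$ lie in a common member of $\mathcal{C}$ (or in $I_0$), and $\vee$-closure of that member returns $x\vee y$ to $J^*$. Because no member of $\mathcal{C}$ contains $a$ and $a\notin I_0$, we get $a\notin J^*$; and $I_0\subseteq J^*$ by construction. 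Thus $J^*\in\mathcal{P}$ is an upper bound for $\mathcal{C}$. Writing $I_0$ explicitly into the union also disposes of the empty chain.

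Zorn's Lemma then yields a maximal element $M$ of $\mathcal{P}$, and the final step is to confirm that $M$ is $a$-maximal. By construction $a\notin M$. If $M'$ is any ideal with $M\subsetneq M'$, then $M'\supseteq I_0$, so were $a\notin M'$ we would have $M'\in\mathcal{P}$ strictly above $M$, contradicting maximality; hence $a\in M'$. This is exactly the defining property of an $a$-maximal ideal, and since $a\in L$ but $a\notin M$, such an $M$ is automatically proper.

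The argument is essentially the join-semilattice analogue of the Krull-type maximal-ideal theorem, and it carries no real obstacle. The only point requiring a moment's care is the $\vee$-closure of the union of a chain, which genuinely relies on the chain being totally ordered (equivalently, one could run the argument over an arbitrary up-directed family); everything else is routine verification of the ideal axioms.
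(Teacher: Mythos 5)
Your proof is correct and follows essentially the same route as the paper's: a Zorn's Lemma argument on the chain-closed family of ideals containing the given one and avoiding $a$, with the union of a chain verified to be an ideal. You are merely a bit more explicit about the empty chain and the final check that the maximal element is $a$-maximal, which the paper leaves to the reader.
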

\begin{proof} Suppose $\mathcal C$ is an increasing chain of ideals of $L$, none of which contains $a$.  Evidently $\bigcup\mathcal C$ does not contain $a$, and it is a downset closed under $\vee$, so it is an ideal.  This shows that every increasing chain of ideals not containing $a$ is contained in an ideal not containing $a$. The lemma therefore follows from Zorn's Lemma.\end{proof} 

\begin{exmp}\label{maxnotprime}\textit{A maximal ideal of a join-semilattice need not be prime.}  Let $X$ be the set $\{x,y,z\}$, and let $L =\{xyz, xy, xz, yz, \emptyset\}\subseteq \mathcal P X$, where we abbreviate the subset $\{x,y\}$ as $xy$.  The maximal ideals of $L$ are $m_z:=\{xy, \emptyset\}$, $m_y:=\{xz, \emptyset\}$, and $m_x:=\{yz, \emptyset\}$.  Observe that $L\setminus m_z = \{xyz, xz, yz\}$, and note that $xz$ and $yz$, though not in $m_z$, have no lower bound in $L\setminus m_z$.  Thus, $m_z$ is not prime.  Similarly, neither are the other maximal ideals.   The only proper ideal of $L$ other than the three maximal ideals is $\{\emptyset\}$, which is also not prime since its complement is not a filter; the improper ideals $L$ and $\emptyset$ are of course not prime.  Thus, while every nonempty join-semilattice must have at least one
maximal ideal (by Lemma \ref{lemlem}), it need not have any prime ideals.. \end{exmp}

Let $\mathbbm 2:=\{0,1\}$ be the join semilattice with $0\vee 1=1$.  For any join-semilattice $L$ and any ideal $I\subseteq L$, define $\phi_I:L\to \mathbbm 2$ by setting $\phi_I(a) = 0$ if $a\in I$ and $\phi_I(a) = 1$ if $a\not\in I$.  Then $\phi_I$ preserves $\vee$ by the definition of ideal.  In contrast, if $L$ is a lattice and $I$ is proper, then $\phi_I$ is a lattice morphism (i.e., preserves both $\vee$ and $\wedge$) if and only if $I$ is prime. 

%%%%%%%%%%%%%%%%%%
\subsection{The Join-Semilattice of Ideals $\ideals L$}
%%%%%%%%%%%%%%%%%%

\textit{Throughout the remainder of this paper, $L$ denotes a join-semilattice.  Sometimes additional conditions are imposed.}  The set of all ideals of $L$, including the improper ideals $\emptyset$ and $L$, is denoted by  $\ideals L$.   Since any intersection of ideals is an ideal, $\ideals L$ is a complete lattice (with meet being set-theoretic intersection and join being the meet of all upper bounds).  For any subset $X\subset L$, the intersection of all ideals containing $X$ is denoted by $\langle X\rangle$.  Note that $\langle\{x\}\rangle = \down x$.  Evidently, 
$$\langle X\rangle =\{\,y\in L\mid y\leq \tbigvee X'\;\text{for some finite}\; X'\subseteq X\,\}.$$   Observe that $\ideals L$, though it is a lattice, does not generally satisfy any distributive laws.   The equational law: $$\text{for all $a\in \ideals L$ and all $B\subseteq \ideals L$, $a\wedge \bigvee\! B=\bigvee\{\,a\wedge b\mid b\in B\,\}$}$$ holds if and only if $L$ satisfies the  \textit{distributive axiom for join-semilattices}---see Section~\ref{djsl}.

The injection map $\down: L\to \ideals L$ is a join-semilattice morphism since $\down a \vee \down b = \langle \down a \cup \down b\rangle = \down(a\vee b)$.  To simplify notation, we sometimes identify $L$ with its image in $\ideals L$.  For example, if $I\in \ideals L$ and $a\in L$, $I\vee a$ is understood to mean $\langle I\cup\{a\}\rangle$.

\begin{lem}\label{univdown} Let $J$ be a complete join-semilattice. If $f:L\to J$ is a $\vee$-morphism,  then there is a unique morphism $\overline f: \ideals L\to J$ that preserves all suprema and satisfies $\overline f\circ\down = f$.   \end{lem}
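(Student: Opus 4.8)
The plan is to define $\overline f$ by the explicit formula
$\overline f(I) := \bigvee_{a\in I} f(a)$,
where the supremum is computed in $J$; this is legitimate precisely because $J$ is a complete join-semilattice, so every subset---including the empty set, whose supremum is the bottom of $J$---has a join. I would first record two easy consequences of the definition. It is monotone: if $I\subseteq I'$ then $\{f(a)\mid a\in I\}\subseteq\{f(a)\mid a\in I'\}$, so $\overline f(I)\le\overline f(I')$. And it satisfies the compatibility condition $\overline f\circ\down=f$: since $f$ is a $\vee$-morphism it is monotone, so for $b\le a$ we have $f(b)\le f(a)$, and since $a\in\down a$ the join $\bigvee_{b\le a}f(b)$ equals $f(a)$; thus $\overline f(\down a)=f(a)$.

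Next I would verify that $\overline f$ preserves arbitrary suprema, which I expect to be the main obstacle, as it is the only step that uses the structure of $\ideals L$ in an essential way. Fix a family $\{I_k\}_{k\in K}$ of ideals, with join $\bigvee_k I_k=\langle\,\tbigcup_k I_k\,\rangle$ in $\ideals L$. Monotonicity immediately gives $\overline f(I_k)\le\overline f\bigl(\bigvee_k I_k\bigr)$ for every $k$, hence $\bigvee_k\overline f(I_k)\le\overline f\bigl(\bigvee_k I_k\bigr)$. For the reverse inequality I would invoke the explicit description of the generated ideal from Subsection 2.2: any $a\in\langle\,\tbigcup_k I_k\,\rangle$ satisfies $a\le\tbigvee X'$ for some finite $X'=\{x_1,\dots,x_n\}$ with each $x_i$ lying in some $I_{k_i}$. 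Because $f$ is a $\vee$-morphism (hence monotone and finite-join preserving), $f(a)\le f(\tbigvee X')=\bigvee_{i=1}^n f(x_i)$, and each $f(x_i)\le\overline f(I_{k_i})\le\bigvee_k\overline f(I_k)$. Therefore $f(a)\le\bigvee_k\overline f(I_k)$ for every such $a$, and taking the supremum over $a\in\bigvee_k I_k$ yields $\overline f\bigl(\bigvee_k I_k\bigr)\le\bigvee_k\overline f(I_k)$. Combining the two inequalities gives equality.

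Finally I would establish uniqueness. The key observation is that every ideal is the join of the principal ideals of its elements: since $I$ is a downset we have $\tbigcup_{a\in I}\down a=I$, and since $I$ is an ideal $\langle I\rangle=I$, so $\bigvee_{a\in I}\down a=I$ in $\ideals L$. Now suppose $g:\ideals L\to J$ preserves all suprema and satisfies $g\circ\down=f$. Then
$g(I)=g\bigl(\bigvee_{a\in I}\down a\bigr)=\bigvee_{a\in I}g(\down a)=\bigvee_{a\in I}f(a)=\overline f(I)$
for every $I\in\ideals L$, so $g=\overline f$. This shows the morphism is unique and completes the argument.
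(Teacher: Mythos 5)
Your proposal is correct and follows essentially the same route as the paper: the same explicit formula $\overline f(I)=\bigvee\{f(a)\mid a\in I\}$, the same use of the description of $\langle\bigcup\mathcal A\rangle$ via finite joins to get preservation of suprema, and the same computation for $\overline f\circ\down=f$. The only difference is that you also spell out the uniqueness argument (via $I=\bigvee_{a\in I}\down a$), which the lemma asserts but the paper's proof leaves implicit; that is a worthwhile addition rather than a deviation.
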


\begin{proof}  Define $\overline f:\ideals L\to J$ by  $\overline f(I):=\tbigvee\{\,f(y)\mid y\in I\,\}$.  If $\mathcal A\subseteq \ideals L$, then 
\begin{align*}
\overline f(\tbigvee \mathcal A) 
&= \tbigvee\{\,f(y)\mid y\leq \tbigvee\negthinspace A,\;A\;\text{a finite subset of }\tbigcup\mathcal A \,\}\\
&= \tbigvee\{\,f(y)\mid y\in I,\; I\in \mathcal A\,\},\;\text{since $f$ preserves finite $\vee$s},\\
&= \tbigvee\{\,f(I)\mid I\in \mathcal A\,\}
\end{align*}
For any $a\in L$, $\overline f(\down a) = \bigvee\{\,f(y)\mid y\leq a\,\} = f(a)$.  Thus $\overline f\circ\down= f$.
\end{proof}

Recall that an element $c$ of a complete join semilattice is said to be \textit{compact} if: whenever $c\leq \bigvee X$ for some subset $X$ of the lattice, there is a finite subset $X'$ of $X$ such that $c\leq \bigvee X'$.  A join-semilattice is said to be \textit{algebraic} if it is complete (and hence has $0$ and $1$) and is generated by its compact elements.  The following facts are well-known; see \cite{B}, VIII.5.

\begin{enumerate}
\item[$(i)$] Let $K$ be a complete join-semilattice. The set of compact elements, denoted by $\cpt K$, forms a sub-join-semilattice of $K$ containing $0$.  For any element $a\in K$, let $C(a):=\{\,c\in \cpt K\mid c\leq a\,\}$. Then $C(a)$ is an ideal of $\cpt K$, and the map $C:K\to \ideals\cpt K$ is order-preserving.  For any ideal $I\subseteq \cpt K$, we have $\bigvee I \in K$ since $K$ is complete.  The map $\bigvee:\ideals\cpt K\to K$ is order-preserving.  
\item[$(ii)$]  For any join semi-lattice $L$, $I\in \ideals L$ is compact if and only if $I = \emptyset$ or $I = \down a$ for some $a\in L$.   Since every ideal is the supremum of the principal ideals in it,  $\ideals L$ is algebraic. 
\item[$(iii)$] If $A$ is an algebraic join-semilattice, then $\bigvee C(a) = a$ for all $a\in A$ and $C(\bigvee I) = I$ for all $I\in \cpt A$.  Hence, the maps $C$ and $\bigvee$  are inverses of one another, giving an order-isomorphism $A\cong \ideals\cpt A$.
\end{enumerate}

%%%%%%%%%%%%%%%%%%%%%%%%
%%%%%%%%%%%%%%%%%%%%%%%%
\subsection{Pierce Congruences and Conjunctivity}
%%%%%%%%%%%%%%%%%%%%%%%%
%%%%%%%%%%%%%%%%%%%%%%%%

In this subsection, we present results of \cite{P}, specialized to join semilattices.  We include proofs because they do not take up much space, and it is useful to have them at hand in a notation that is consistent with the rest of this paper.

We say $R\subseteq L\times L$ is a $\vee$-congruence on $L$ if $R$ is an equivalence relation and for all $a,a',b\in L$, $(a, a')\in R$ implies $(a\vee b, a'\vee b)\in R$.  The set of equivalence classes of $R$ is denoted by $L/R$, and the class of $a\in L$ is denoted by $[a]$, or $[a]_R$ if reference to $R$ is needed.  The rule $[a]\vee [b]:=[a\vee b]$ defines an operation on $L/R$, since if $(a,a'), (b,b') \in R$, then $a\vee b\sim a'\vee b\sim a'\vee b'$.  With this operation, $L/R$ is a join-semilattice, and $a\mapsto [a]:L\to L/R$ is a surjective semilattice morphism.   If $R,R'$ are $\vee$-congruences on $L$, we say \textit{$R$ is weaker than $R'$} or  \textit{$R'$ is stronger than $R$} if $R\subseteq R'$.  The weakest $\vee$-congruence on $L$ is equality and the strongest is $L\times L$.

Let $Y$ be a subset of $L$.  We define the relation $R^Y=R^Y(L)$ by:
$$(a,a')\in {R}^Y\quad\Leftrightarrow_{def}\quad \forall x\in L,\, x\vee a\in Y \Leftrightarrow  x\vee a'\in Y.$$     We define $$(Y:a):=\{\,x\in L\mid x \vee a \in Y\,\}.$$ We call the elements of $(Y:a)$ the \textit{$Y$\kern-3pt-\kern1pt supercomplements of $a$}.  If $1\in L$, we call the elements of $(1:a):=(\{1\}:a)$ simply the \textit{supercomplements of $a$}. Observe that $(a,a')\in R^Y$ if and only if $(Y:a) = (Y:a')$.  

\begin{lem} For any subset $Y$ of $L$, $R^Y$ is a $\vee$-congruence on $L$.\end{lem}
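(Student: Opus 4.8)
The plan is to work throughout with the reformulation recorded immediately before the statement, namely that $(a,a')\in R^Y$ if and only if $(Y:a)=(Y:a')$. This translates both requirements in the definition of a $\vee$-congruence---that $R^Y$ be an equivalence relation and that it be compatible with $\vee$---into statements about the family of sets $\{(Y:a)\mid a\in L\}$, which are much easier to verify.

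First I would dispatch the equivalence-relation axioms. Since $(a,a')\in R^Y$ is, by the reformulation, literally the assertion that two sets are equal, reflexivity, symmetry, and transitivity of $R^Y$ are inherited directly from the corresponding properties of equality of subsets of $L$. There is essentially nothing to compute here beyond invoking the reformulation.

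The substantive step is compatibility with $\vee$: I must show that $(a,a')\in R^Y$ implies $(a\vee b,\,a'\vee b)\in R^Y$ for every $b\in L$. The key is the identity
\[
(Y:a\vee b)=\{\,x\in L\mid x\vee b\in (Y:a)\,\},
\]
which follows from the associativity and commutativity of $\vee$: indeed $x\in(Y:a\vee b)$ means $x\vee a\vee b\in Y$, i.e. $(x\vee b)\vee a\in Y$, i.e. $x\vee b\in(Y:a)$. The right-hand side of this identity depends on $a$ only through the set $(Y:a)$. Hence if $(Y:a)=(Y:a')$, the same description gives $(Y:a\vee b)=(Y:a'\vee b)$, which is exactly $(a\vee b,\,a'\vee b)\in R^Y$.

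I do not anticipate any genuine obstacle: the whole argument is routine once the reformulation is used, and the only point requiring any care is confirming the displayed set identity, where one must keep the roles of $a$ and $b$ straight when applying associativity and commutativity. With that identity in hand, both the equivalence-relation axioms and the congruence condition fall out immediately.
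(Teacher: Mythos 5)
Your proposal is correct and is essentially the paper's own argument: the paper also notes the equivalence-relation axioms are immediate and verifies compatibility by rewriting $x\vee(b\vee a)=(x\vee b)\vee a$, which is exactly your set identity $(Y:a\vee b)=\{\,x\in L\mid x\vee b\in(Y:a)\,\}$ stated pointwise. The only difference is presentational---you package the computation as a statement about the sets $(Y:a)$ rather than about individual elements.
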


\begin{proof} $R^Y$ is clearly an equivalence relation.  Suppose $(a,a')\in R^Y(L)$ and $b\in L$.  Then for all $x\in L$,  $x\vee (b\vee a) = (x\vee b)\vee a \in Y \;\Leftrightarrow (x\vee b)\vee a' = x \vee (b\vee a') \in Y$.  Thus $(a\vee b, a'\vee b)\in R^Y$, as required.
\end{proof}

\begin{defn} A congruence of the form $R^Y$ will be called a \textit{Pierce congruence} (in recognition of \cite{P}).  We write  $R^b$ as shorthand for $R^{\up b}$.  We say \textit{$L$ is conjunctive} if $L$ has $1$ and $R^1(L)$ is equality.\end{defn}

There are several equivalent ways to formulate the conjunctivity condition.  Specifically, the following are clearly equivalent:
\begin{enumerate}
\item $L$ is conjunctive.
\item If two elements of $L$ have the same supercomplements, they are equal.
\item If $a_0$ and $a_1$ are distinct elements of $L$, then there is $w\in L$ such that either \begin{align*}w\vee a_0 &= 1\not=w\vee a_1,\;\text{or}\\ w\vee a_0&\not= 1=w\vee a_1.\end{align*}
\item For all $a, b\in L$ such that $b\not\leq a$, there is $w\in L$ such that  $a\leq w<1 = w\vee b$. 
\item For all $a, b\in L$ such that $a< b$, there is $w\in L$ such that  $a\leq w<1 = w\vee b$. 
\item Every principal ideal of $L$ is an intersection of maximal ideals.  (See Proposition \ref{conjmax}.)
\end{enumerate}

\begin{exmp} A product of conjunctive join-semilattices is conjunctive.  A sub-semilattice of a conjunctive join-semilattice need not be conjunctive. The two-element join-semilattice  $\mathbbm{2}:=\{0,1\}$ is conjunctive, as is $\mathbbm{2}\times \mathbbm{2}$, but the sub-join-semilattice $\{(0,0), (1,0), (1,1)\}\subseteq \mathbbm{2}\times \mathbbm{2}$ is not conjunctive.  \end{exmp} 

\begin{rem}  It follows from the definitions that for any $a\in L$ and any up-set $U\subseteq L$, $(U:a)$ is an up-set of $L$.  The map $a\mapsto (U:a)$ is order-preserving, if we order up-sets by containment.  Thus, $a\mapsto (U:a)$ is an order-isomorphism of $L/R^U$ onto $\{\,(U:a)\mid a\in L\}$.   Note that $(U:a)\cup(U:b)$ is not generally equal to $(U:a\vee b)=((U:a):b) = ((U:b):a)$.  For example, if $L$ is the power set of $\{x,y\}$, $(1:\{x\})\cup(1:\{y\})$ does not contain $\emptyset$, but $(1:\{x,y\}) = (1:1)$ does. \end{rem}

\begin{thm}\label{mainpierce1}Suppose $U\subseteq L$ is an up-set.   Then:
\begin{enumerate}
\item[$(i)$]   $U$ is an $R^U$ class (so $1_{L/R^U} = U$).
\item[$(ii)$]  $L/R^U$ is conjunctive.
\item[$(iii)$]  In any congruence properly stronger than $R^U$, the top class properly contains $U$.
\end{enumerate}
\end{thm}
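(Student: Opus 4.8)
The plan is to extract a single computation that drives all three parts: for every $a\in L$ one has $(U:a)=L$ if and only if $a\in U$. The forward implication is immediate from $U$ being an up-set, since $a\in U$ forces $x\vee a\geq a\in U$, hence $x\vee a\in U$, for every $x$; the reverse implication follows by instantiating $x=a$, because $a\vee a=a$, so $(U:a)=L$ yields $a\in U$. I assume throughout that $U\neq\emptyset$ (if $U=\emptyset$ then $(U:a)=\emptyset$ for all $a$, so $R^U=L\times L$ and the statements degenerate). Since $(a,a')\in R^U$ means exactly $(U:a)=(U:a')$, this computation shows that the elements with $(U:a)=L$ are precisely the members of $U$; they are therefore pairwise $R^U$-related and $R^U$-related to nothing outside $U$, so $U$ is a single $R^U$-class. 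To see it is the top class, I note that for any $a$ and any $u\in U$ we have $[a]\vee U=[a\vee u]$ with $a\vee u\in U$, whence $[a]\vee U=U$; thus $[a]\leq U$ for all $a$ and $1_{L/R^U}=U$, proving $(i)$.

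For $(ii)$ I use part $(i)$ to translate the conjunctivity test into a statement about supercomplements. By $(i)$, a class $[c]$ equals the top $1_{L/R^U}$ if and only if $c\in U$, so $[w]\vee[a]=[w\vee a]$ is the top class if and only if $w\vee a\in U$, i.e. $w\in(U:a)$. Now take distinct classes $[a]\neq[a']$; by definition of $R^U$ this means $(U:a)\neq(U:a')$, so I may choose $w$ in the symmetric difference, say $w\in(U:a)\setminus(U:a')$. Then $[w]\vee[a]=1_{L/R^U}$ while $[w]\vee[a']\neq 1_{L/R^U}$, which is exactly the conjunctivity condition $(3)$ applied to $L/R^U$. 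Hence $R^1(L/R^U)$ is equality and $L/R^U$ is conjunctive.

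For $(iii)$ I argue by contrapositive. Let $S$ be a congruence with $R^U\subseteq S$. Since $U$ is one $R^U$-class it lies inside a single $S$-class $T$, and the join computation from $(i)$ shows $T$ is the top of $L/S$; as $U\subseteq T$ always holds, it suffices to prove that $T=U$ forces $S=R^U$. So suppose $U$ is itself an $S$-class, and take $(a,a')\in S$. For any $w\in(U:a)$ the congruence property gives $(w\vee a,\,w\vee a')\in S$; since $w\vee a\in U=T$ and $U$ is a full $S$-class, it follows that $w\vee a'\in U$, i.e. $w\in(U:a')$. By symmetry $(U:a)=(U:a')$, so $(a,a')\in R^U$; thus $S\subseteq R^U$ and $S=R^U$. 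Contrapositively, if $S$ is properly stronger than $R^U$ then the top class $T$ properly contains $U$.

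The main obstacle is part $(iii)$, and its content is precisely the implication used there: membership in $U$ becomes an $S$-invariant exactly when $U$ is a full $S$-class, and it is the interaction of this invariance with the congruence property $(a,a')\in S\Rightarrow(w\vee a,w\vee a')\in S$ that collapses $S$ into $R^U$. Parts $(i)$ and $(ii)$, by contrast, are essentially bookkeeping around the single equivalence $(U:a)=L\iff a\in U$ together with the symmetry $w\in(U:a)\iff a\in(U:w)$ that is built into the definition of $R^U$.
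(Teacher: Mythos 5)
Your proof is correct. Parts $(i)$ and $(ii)$ follow the paper's argument essentially verbatim: the equivalence $(U:a)=L\iff a\in U$ and the choice of a witness $c$ (your $w$) in the symmetric difference of $(U:a)$ and $(U:a')$ are exactly the paper's steps, and your explicit caveat about $U=\emptyset$ is a reasonable addition the paper leaves implicit. Part $(iii)$ is where you genuinely diverge: the paper argues directly, extracting a pair $(a,a\vee b)\in R'\setminus R^U$ and a $c$ with $a\vee c\notin U$ but $a\vee b\vee c\in U$, thereby exhibiting a concrete element of the top $R'$-class lying outside $U$; you instead argue by contraposition, showing that if $U$ survives as a full $S$-class then every $(a,a')\in S$ satisfies $(U:a)=(U:a')$, so $S\subseteq R^U$. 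Your route has the advantage that it directly establishes the maximality statement ``any congruence in which $U$ is a class is contained in $R^U$,'' which is the substance of Theorem \ref{mainpierce2}; the paper reaches that conclusion only later, by a detour through Lemma \ref{piercelemma}. The paper's direct construction, on the other hand, is slightly more informative in that it names the element witnessing the failure. Both arguments rest on the same mechanism --- applying the congruence property to $w\vee a$ versus $w\vee a'$ and testing membership in $U$ --- so the difference is one of logical packaging rather than of underlying idea.
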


\begin{proof} 
$(i)$ If $a\in U$, $(U:a) = L$.  If $b\not\in U$, $b\vee b \not\in U$, so $(U:b)\not= L$.

$(ii)$  We use $[a]$ to denote $[a]_{R^U}$.  Suppose $[a]\not=[b]$.  Interchanging $a$ and $b$ if necessary, we may assume that there is $c$ such that  $a\vee c \not\in U$ and $b\vee c\in U$.  Then $[a]\vee\, [c]\not=U$ and $[b]\vee\, [c] =U$.  Thus, $[a]$ and $[b]$ are in different classes of $R^1(L/R^U)$.

$(iii)$  If $R'$ is properly stronger than $R^U$, then $(a, a\vee b) \in R'\setminus R^U$ for some $a, b\in L$.  Therefore, there is $c\in L$ such that $a\vee c\not\in U$ but $a\vee b\vee c\in U$. But $a\vee c\equiv a\vee b\vee c  \mod R'$, so the $R'$ equivalence class containing $U$ also contains an element not in $U$.
\end{proof}

\begin{lem}\label{piercelemma} Suppose $K$ is a conjunctive join-semilattice and $h:L\to K$ is a surjective $\vee$-morphism. Let $V=h^{-1}(1_K)$.  Then $[x]_{R^V}\mapsto h(x):L/R^V\to K$ is an isomorphism.\end{lem}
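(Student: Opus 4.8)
The plan is to show that the assignment $\psi([x]_{R^V}):=h(x)$ is a well-defined bijective $\vee$-morphism, with essentially all of the content packaged into the single equivalence
\[
(x,x')\in R^V \quad\Leftrightarrow\quad h(x)=h(x').
\]
First I would record the easy preliminaries. Since $h$ is order-preserving (being a $\vee$-morphism) and $\{1_K\}$ is an up-set of $K$, the preimage $V=h^{-1}(1_K)$ is an up-set of $L$. Hence $R^V$ is a $\vee$-congruence (every $R^Y$ is), so $L/R^V$ is a join-semilattice, and by Theorem~\ref{mainpierce1}$(i)$ the class $V$ is its top. Because $h$ preserves $\vee$, the rule $\psi([x]_{R^V})=h(x)$ will respect $\vee$ the moment it is known to be well-defined; so the real work lies entirely in comparing the fibres of $h$ with the classes of $R^V$.

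The crux is the displayed equivalence. The backward implication is immediate: if $h(x)=h(x')$, then $h(w)\vee h(x)=h(w)\vee h(x')$ for every $w\in L$, so $w\vee x\in V \Leftrightarrow w\vee x'\in V$, i.e.\ $(x,x')\in R^V$. For the forward implication I would transfer the defining condition of $R^V$ across $h$. Since $w\vee x\in V$ means $h(w)\vee h(x)=1_K$, that is $h(w)\in(1_K:h(x))$, the relation $(x,x')\in R^V$ asserts that $h(w)\in(1_K:h(x))\Leftrightarrow h(w)\in(1_K:h(x'))$ for all $w\in L$. This is exactly where surjectivity of $h$ enters: as $w$ ranges over $L$, the value $h(w)$ ranges over all of $K$, so the condition becomes $(1_K:h(x))=(1_K:h(x'))$, meaning $h(x)$ and $h(x')$ have the same supercomplements in $K$. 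Conjunctivity of $K$, in the form of equivalent formulation $(2)$ of the definition, then forces $h(x)=h(x')$.

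This surjectivity-driven passage of a universally quantified condition from $L$ up to $K$, closed off by the conjunctivity of $K$, is the only nonroutine point of the argument; everything else is bookkeeping. The established equivalence does double duty: read left to right it gives well-definedness of $\psi$, and read right to left it gives injectivity. Surjectivity of $\psi$ is inherited directly from that of $h$, and $\psi$ preserves $\vee$ by the first paragraph, so $\psi$ is a bijective $\vee$-morphism. Finally, since the order on any join-semilattice is definable from $\vee$ (via $a\leq b\Leftrightarrow a\vee b=b$), a bijective $\vee$-morphism automatically has a $\vee$-preserving inverse; hence $\psi$ is an isomorphism, as claimed.
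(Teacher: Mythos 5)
Your proof is correct and follows essentially the same route as the paper's: both reduce the lemma to the single equivalence $(x,x')\in R^V \Leftrightarrow h(x)=h(x')$, proved by pushing the quantifier over $L$ up to $K$ via surjectivity of $h$ and then invoking conjunctivity of $K$ to identify elements with the same supercomplements. The only difference is that you spell out the routine bookkeeping (well-definedness, injectivity, the $\vee$-preserving inverse) a bit more explicitly than the paper does.
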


\begin{proof}  Note that $V:=h^{-1}(1_K)$ is an up-set.  Let $[a]$ denote $[a]_{R^V}$.  For all $a, a'\in L$, the following are equivalent: $(i)$ $[a] = [a']$; $(ii)$ for all $x\in L$, $x\vee a\in V\;\Leftrightarrow\; x\vee a'\in V$; $(iii)$ for all $x\in L$, $h(x)\vee h(a) = 1_K\;\Leftrightarrow\;  h(x)\vee h(a')= 1_K$.  Since $K$ is conjunctive and $h$ is surjective, $(iii)$ is equivalent to $h(a) = h(a')$.  Thus, the map $[a]\mapsto h(a)$ is well-defined and injective.  It is surjective and respects $\vee$ by hypothesis, so it is an isomorphism.\end{proof}

\begin{thm}\label{mainpierce2}Suppose $U\subseteq L$ is an up-set.  Let $Q$ be any congruence on $L$ in which $U$ is a class.  Then, $[x]_Q\mapsto [x]_{R^U}$ is a surjective $\vee$-morphism from $L/Q$ to $L/R^U$.  Thus $R^U$ is the strongest congruence on $L$ in which $U$ is a class. \end{thm}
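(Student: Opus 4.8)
The plan is to reduce everything to the single set-theoretic containment $Q \subseteq R^U$. First I would record the general principle that, for two congruences on $L$, the assignment $[x]_Q \mapsto [x]_{R^U}$ descends to a well-defined map on $L/Q$ precisely when $Q \subseteq R^U$: well-definedness asserts exactly that $(x,y) \in Q$ forces $(x,y) \in R^U$. So the entire first claim of the theorem hinges on this inclusion, and once it is in hand the map is automatically a $\vee$-morphism (since $[x]_Q \vee [y]_Q = [x\vee y]_Q \mapsto [x\vee y]_{R^U} = [x]_{R^U}\vee[y]_{R^U}$ by the definition of join on each quotient) and automatically surjective (every $R^U$-class $[x]_{R^U}$ is the image of $[x]_Q$).

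The substantive step is therefore to show $Q \subseteq R^U$, and this is where the hypothesis that $U$ is a single $Q$-class is used. I would take a pair $(a,a') \in Q$ and verify the defining condition of $R^U$ directly: for an arbitrary $x \in L$, the congruence property of $Q$ gives $(x\vee a,\, x\vee a') \in Q$, so these two elements lie in the same $Q$-class. Because $U$ is assumed to be an \emph{entire} class of $Q$, membership in $U$ is constant on $Q$-classes; hence $x\vee a \in U$ if and only if $x\vee a' \in U$. Since $x$ was arbitrary, $(a,a') \in R^U$, which is the desired inclusion.

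To extract the characterization of $R^U$ as the strongest such congruence, I would invoke Theorem \ref{mainpierce1}$(i)$, which tells us that $U$ is itself an $R^U$-class; thus $R^U$ is one of the congruences $Q$ under consideration. The inclusion just proved says every competing $Q$ satisfies $Q \subseteq R^U$, i.e.\ $R^U$ is stronger than each of them, so $R^U$ is the strongest congruence in which $U$ is a class.

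I do not anticipate a genuine obstacle: the argument is formal once one sees that ``$U$ is a $Q$-class'' is exactly the leverage needed to pass from $Q$-equivalence of $x\vee a$ and $x\vee a'$ to their simultaneous membership in $U$. The only point demanding a little care is keeping straight that ``stronger'' means ``larger as a relation,'' so that the maximal (strongest) congruence corresponds to the largest inclusion, matching the direction of $Q \subseteq R^U$.
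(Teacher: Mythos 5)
Your proof is correct, but it takes a genuinely different route from the paper's. You reduce everything to the single containment $Q\subseteq R^U$ and verify it by a direct computation: $(a,a')\in Q$ gives $(x\vee a,\,x\vee a')\in Q$ for every $x$, and since $U$ is an entire $Q$-class, membership in $U$ is constant on $Q$-classes, so $(Y:a)=(Y:a')$ with $Y=U$; well-definedness, the $\vee$-morphism property, and surjectivity of $[x]_Q\mapsto[x]_{R^U}$ then all follow formally, and Theorem \ref{mainpierce1}$(i)$ supplies the fact that $R^U$ itself belongs to the class of congruences under comparison. The paper instead passes to $J:=L/Q$, forms the conjunctive quotient $K:=J/R^1$, observes via Theorem \ref{mainpierce1}$(i)$ that the composite $h:L\to K$ satisfies $h^{-1}(1_K)=U$, and invokes Lemma \ref{piercelemma} to identify $K$ with $L/R^U$. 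Your argument is more elementary and self-contained (it never uses Lemma \ref{piercelemma} or the conjunctivity of the quotient from Theorem \ref{mainpierce1}$(ii)$), and it makes the inclusion $Q\subseteq R^U$ --- which the paper leaves implicit in its final ``Thus'' --- fully explicit. What the paper's detour buys is extra structural information: it exhibits the surjection $L/Q\to L/R^U$ as, up to isomorphism, the canonical conjunctive quotient of $L/Q$, i.e.\ $L/R^U\cong (L/Q)/R^1(L/Q)$, which is not visible from your containment argument alone. You are also right to flag the direction of ``stronger''; your usage matches the paper's convention that $R'$ is stronger than $R$ when $R\subseteq R'$.
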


\begin{proof}  For convenience, set $J:=L/Q$ and let $f:L\to J$ be the canonical surjection.   Let $K:=J/R^1$, and let $g:J\to K$ be the canonical surjection.  Let $h = g\circ f$.  By \ref{mainpierce1}, $(i)$, $h^{-1}(1_K) = U$.   By  Lemma \ref{piercelemma}, $g(f(x))\mapsto [x]_{R^U}:K\to L/R^U$ is an isomorphism, so  $f(x)=[x]_Q\mapsto [x]_{R^U}:J\to L/R^U$ is a surjective $\vee$-morphism.  \end{proof}

The following proposition shows that the results above imply analogous results for distributive lattices, if we strengthen the hypotheses on $U$.

\begin{prop}\label{distpierce}  Suppose $L$ is a distributive lattice, and $U$ is a filter.  Then $R^U$ is a lattice-congruence.\end{prop}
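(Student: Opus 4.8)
The plan is to show that $R^U$, which we already know to be a $\vee$-congruence, also respects $\wedge$; these two facts together make it a lattice-congruence. So I would fix $(a,a')\in R^U$ and an arbitrary $b\in L$, and aim to prove $(a\wedge b,\,a'\wedge b)\in R^U$. Recalling that membership in $R^U$ is the equality of supercomplement sets, this amounts to showing $(U:a\wedge b) = (U:a'\wedge b)$, i.e., for every $x\in L$,
\[
x\vee(a\wedge b)\in U \quad\Longleftrightarrow\quad x\vee(a'\wedge b)\in U,
\]
under the hypothesis that $x\vee a\in U \Leftrightarrow x\vee a'\in U$ for all $x$.

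The first key step is to invoke distributivity to rewrite $x\vee(a\wedge b) = (x\vee a)\wedge(x\vee b)$, and likewise $x\vee(a'\wedge b) = (x\vee a')\wedge(x\vee b)$. This converts the join we cannot directly control into a meet whose factors we \emph{can} control: one factor is exactly a supercomplement condition on $a$ (resp.\ $a'$), and the other factor, $x\vee b$, is identical on both sides.

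The second key step is the filter property of $U$. Since $U$ is a filter in a lattice, it is an up-set and is closed under the meet of two of its elements, so for any $p,q\in L$ one has $p\wedge q\in U \Leftrightarrow (p\in U \text{ and } q\in U)$ — the forward implication from $U$ being an up-set (as $p\wedge q\le p,q$), the reverse from closure under $\wedge$. Applying this with $p = x\vee a$ (resp.\ $x\vee a'$) and $q = x\vee b$ gives that $x\vee(a\wedge b)\in U$ is equivalent to ``$x\vee a\in U$ and $x\vee b\in U$,'' and symmetrically for $a'$. The hypothesis $(U:a)=(U:a')$ makes the first conjuncts interchangeable while the second conjunct $x\vee b\in U$ is shared, yielding the desired equivalence.

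I do not anticipate a serious obstacle here; the only point requiring care — and the place where both hypotheses are genuinely used — is the second step, where one must note that detecting membership of a meet in $U$ via its factors needs \emph{both} halves of the filter definition (up-set and closure under $\wedge$), and that the rewriting in the first step needs distributivity, not merely the semilattice structure. It is worth remarking at the end that both hypotheses are essential: without distributivity the join-into-meet rewriting fails, and without $U$ being a filter (merely an up-set) the meet-detection equivalence breaks, so the conclusion is particular to the distributive-lattice setting.
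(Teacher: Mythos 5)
Your proposal is correct and follows exactly the paper's own argument: rewrite $x\vee(a\wedge b)$ as $(x\vee a)\wedge(x\vee b)$ by distributivity, then use the up-set and meet-closure halves of the filter property to reduce membership of the meet in $U$ to membership of the factors, and finally swap $a$ for $a'$ using the hypothesis. No gaps.
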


\begin{proof} We need to verify that if $(a, a')\in R^U$, then $(a\wedge b, a'\wedge b)\in R^U$.  This is seen as follows.  For all $x\in L$:
\begin{align*}
x\vee (a\wedge b)\in U \quad &\Leftrightarrow \quad (x\vee a)\wedge (x\vee b)\in U\\
                                         &\Leftrightarrow \quad (x\vee a)\in U\;\&\; (x\vee b)\in U,\quad\text{since $U$ is an up-set,}\\
                                         &\Leftrightarrow \quad (x\vee a')\in U\;\&\; (x\vee b)\in U,
                                         \quad\text{since $(a, a')\in R^U$,}\\
                                         &\Leftrightarrow \quad (x\vee a')\wedge (x\vee b)\in U,\quad\text{since $U$ is a filter,}\\
                                         &\Leftrightarrow \quad x\vee (a'\wedge b)\in U.   \qedhere
\end{align*}
\end{proof}

\begin{prop}  If $c\in L$ is a compact element, then $R^c$ preserves infinite joins.\end{prop}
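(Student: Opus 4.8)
The plan is to establish that $R^c$ respects arbitrary joins in exactly the sense that a $\vee$-congruence respects binary ones: whenever $(a_i,a_i')\in R^c$ for all $i$ in an index set $I$, then $\bigl(\bigvee_i a_i,\,\bigvee_i a_i'\bigr)\in R^c$. (This is what lets one define $\bigvee[a_i]:=[\bigvee a_i]$ unambiguously, so that $L/R^c$ is complete and the quotient map preserves all suprema.) Writing $a=\bigvee_i a_i$ and $a'=\bigvee_i a_i'$, and recalling that $R^c=R^{\up c}$, so that $(a,a')\in R^c$ means precisely that $c\le x\vee a \Leftrightarrow c\le x\vee a'$ for every $x\in L$, it suffices by symmetry to fix $x$ and prove the single implication $c\le x\vee a \myimplies c\le x\vee a'$. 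We may assume $I$ is nonempty, the empty-join case being trivial since both joins equal $0$.

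First I would rewrite the hypothesis using the identity $x\vee\bigvee_i a_i=\bigvee_i(x\vee a_i)$, which holds in any complete join-semilattice (for nonempty $I$). Thus $c\le x\vee a$ reads $c\le\bigvee_i(x\vee a_i)$. This is where compactness enters, and it is the crux of the argument: since $c$ is compact and $c\le\bigvee_i(x\vee a_i)$, there is a finite $F\subseteq I$ with $c\le\bigvee_{i\in F}(x\vee a_i)=x\vee\bigvee_{i\in F}a_i$. I would then invoke that $R^c=R^{\up c}$ is a $\vee$-congruence, hence respects finite joins, so $\bigl(\bigvee_{i\in F}a_i,\,\bigvee_{i\in F}a_i'\bigr)\in R^c$. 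Setting $y=\bigvee_{i\in F}a_i$ and $y'=\bigvee_{i\in F}a_i'$, the relation $(y,y')\in R^c$ applied to the instance $c\le x\vee y$ yields $c\le x\vee y'$. Monotonicity of $\vee$ then finishes the step, since $x\vee y'=x\vee\bigvee_{i\in F}a_i'\le x\vee\bigvee_{i\in I}a_i'=x\vee a'$, whence $c\le x\vee a'$, as required.

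The only genuine obstacle is the passage from the infinite join to a finite subfamily, and this is exactly what compactness of $c$ supplies; everything else is bookkeeping with the definition of $R^{\up c}$ and the monotonicity of $\vee$. It is worth emphasizing that compactness is used essentially: the binary $\vee$-congruence property alone cannot reach an infinite join, and for a non-compact $c$ the reduction to a finite $F$ — and with it the whole argument — would break down.
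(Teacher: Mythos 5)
Your proof is correct and follows essentially the same route as the paper's: compactness of $c$ reduces the membership test $c\le x\vee\bigvee_i a_i$ to a finite subfamily, the finite $\vee$-congruence property lets you swap that finite join for the corresponding join of the $a_i'$, and monotonicity finishes. The only (harmless) difference is that you phrase ``preserves infinite joins'' via indexed families $(a_i,a_i')$ while the paper uses two subsets with the same set of congruence classes; the argument is the same.
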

\begin{proof} Suppose $A_0, A_1\subseteq L$ and $\{\,[a]\mid a\in A_0\,\} = \{\,[a]\mid a\in A_1\,\}$.
Then, for all $x\in L$:
\begin{align*} x\vee \bigvee A_0\geq c &\Leftrightarrow x\vee \bigvee A'_0\geq c,\quad\text{for some finite $A'_0\subseteq A_0$}\\
&\Leftrightarrow x\vee \bigvee A'_1\geq c,\quad\text{for some finite $A'_1\subseteq A_1$}\\
&\Leftrightarrow x\vee \bigvee A_1\geq c.\qedhere
\end{align*}
\end{proof}

%%%%%%%%%%%%%%%%%%%%%%%%
%%%%%%%%%%%%%%%%%%%%%%%%
\subsection{Ideal Conjunctivity}
%%%%%%%%%%%%%%%%%%%%%%%%
%%%%%%%%%%%%%%%%%%%%%%%%

Recall that $L$ refers to an arbitrary join-semilattice.  Since $\ideals L$ is a join-semilattice, all the results of the previous subsection concerning Pierce congruences apply to it.   This must be understood with care.  First of all, the superscript $1$ in $R^1(\ideals L)$ refers to $1_{\ideals L} = L\in \ideals L$.    Second, while $\ideals L$ is complete, the congruence $R^1(\ideals L)$ in general respects only finite suprema.  The canonical map $a\to [a]:\ideals L\to (\ideals L)/R^1$ preserves finite suprema, but it may fail to preserve infinite suprema, since $1_{\ideals L}$ is not compact when $L$ does not have a top element.

\begin{exmp} View $\N$ as a join semilattice with the natural order.  Then $\ideals\N = \{\,\down n\mid n\in \N\,\}\cup\{\N\}$, where $\N = 1_{\ideals\N}$.  For every $n\in \N$, $(\N:\down n)= \{\N\}$.  On the other hand, $(\N:\N)=\ideals\N$.  Thus, $\ideals\N/R^1\cong\mathbbm 2$.    We have 
$\bigvee\{\,\down n\mid n\in \N\,\} = \N$.  However, $[\down n] = [\down 0]$ for all $n\in \N$.  Thus, it is not the case that $\bigvee\{\,[\down n]\mid n\in \N\,\} = [\N]$.  The map $I\mapsto [I]:\ideals\N\to \ideals\N/R^1$ does not preserve infinite suprema.\end{exmp}

\begin{defn}  The restriction of $R^1(\ideals L)$ to $L$ is denoted by   $$R^1(\ideals L)|_L:=R^1(\ideals L)\cap(L\times L).$$  We say that $L$ is \textit{ideally conjunctive} if $R^1(\ideals L)|_L$ is equality. \end{defn} 

\begin{defn}  We say that $W\in\ideals L$ is an \textit{ideal supercomplement of $a\in L$} if  $W\vee a = L$, i.e., the ideal generated by $W\cup \{a\}$ is the improper ideal $L$.\end{defn}

As with the conjunctive property, there are several ways to say that a join-semilattice is ideally conjunctive.    The following are clearly equivalent:
\begin{enumerate}
\item $L$ is ideally conjunctive.
\item   If two elements of $L$ have the same set of ideal supercomplements, they are equal.
\item for any $a_0, a_1\in L$, if $a_0\not= a_1$, there is an ideal $W\in \ideals L$ such that either \begin{align*}W\vee a_0 &= L\not=W\vee a_1,\;\text{or}\\ W\vee a_0&\not= L=W\vee a_1.\end{align*}
\item For all $a, b\in L$ such that $b\not\leq a$, there is $W\in \ideals L$ such that  $a\in W\not =L = W\vee b$. 
\item For all $a, b\in L$ such that $a< b$, there is $W\in \ideals L$ such that  $a\in W\not =L = W\vee b$.
\item Every principal ideal of $L$ is an intersection of maximal proper ideals.  (See Proposition \ref{idealconjmax}.)
\end{enumerate}

\begin{lem} If $L$ has $1$, then $L$ is ideally conjunctive if and only if $L$ is conjunctive.\end{lem}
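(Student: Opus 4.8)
The plan is to prove the stronger, symmetric statement that drives both implications at once: for all $a,b\in L$, one has $(a,b)\in R^1(L)$ if and only if $(\down a,\down b)\in R^1(\ideals L)$. Granting this, the lemma is immediate. Indeed, $L$ is conjunctive exactly when $R^1(L)$ is equality, whereas $L$ is ideally conjunctive exactly when $R^1(\ideals L)|_L$ is equality; since $\down$ is injective, conjunctivity fails iff there exist $a\neq b$ with $(a,b)\in R^1(L)$, and ideal conjunctivity fails iff there exist $a\neq b$ with $(\down a,\down b)\in R^1(\ideals L)$. The biconditional forces these two failures to occur together.

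For the direction ``$(\down a,\down b)\in R^1(\ideals L)\Rightarrow (a,b)\in R^1(L)$,'' I would restrict attention to principal ideals. Given $x\in L$, take $W=\down x$; since $\down$ is a $\vee$-morphism, $\down x\vee\down a=\down(x\vee a)$, and this equals $1_{\ideals L}=L$ iff $x\vee a=1$. Thus the hypothesis, specialized to $W=\down x$, yields $x\vee a=1\Leftrightarrow x\vee b=1$ for every $x\in L$, which is exactly $(a,b)\in R^1(L)$. This half needs only that $\down$ is a $\vee$-morphism and that $\down c=L$ iff $c=1$.

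The reverse direction carries the real content, and I expect it to be the main obstacle. The key computational lemma is: for $a\in L$ and $W\in\ideals L$, $W\vee\down a=L$ if and only if either $a=1$ or $W$ contains a supercomplement of $a$ (a $w$ with $w\vee a=1$). To see this, recall $W\vee\down a=\langle W\cup\{a\}\rangle=\{\,y\mid y\le\bigvee F\text{ for some finite }F\subseteq W\cup\{a\}\,\}$, so $W\vee\down a=L$ iff $1\le\bigvee F$ for some such $F$; because $W$ is closed under finite joins, $\bigvee(F\cap W)\in W$, and a short case analysis on whether $a\in F$ produces either $a=1$ or a single $w\in W$ with $w\vee a=1$. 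Armed with this, suppose $(a,b)\in R^1(L)$. Then $a=1\Leftrightarrow b=1$ (if $a=1$ then $(1:a)=L$, forcing $b\vee b=b=1$, and symmetrically), and for each individual $w\in L$ we have $w\vee a=1\Leftrightarrow w\vee b=1$. Feeding both facts into the lemma gives $W\vee\down a=L\Leftrightarrow W\vee\down b=L$ for every $W\in\ideals L$, that is, $(\down a,\down b)\in R^1(\ideals L)$.

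The only delicate points are the bookkeeping around the top element---in particular that an arbitrary ideal $W$ realizing $W\vee\down a=L$ can, thanks to $\vee$-closure, be replaced by a single supercomplement of $a$ that it contains---and the degenerate case $a=1$, which the lemma absorbs cleanly. Everything else reduces to recognizing the principal ideals $\down x$ as enough to recover the ordinary supercomplements of $a$.
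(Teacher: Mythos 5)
Your proof is correct and rests on the same two computations as the paper's: extracting a single supercomplement $u\in W$ from $W\vee \down a = L$ by using the $\vee$-closure of the ideal $W$, and specializing to principal ideals $W=\down x$ for the converse direction. The only real difference is packaging --- you establish the stronger, symmetric identity $R^1(L)=R^1(\ideals L)|_L$ directly from the congruence definitions (handling the degenerate case $a=1$ explicitly), whereas the paper argues through the equivalent order-theoretic reformulation (for $b\not\leq a$, produce a witness $w$ with $a\leq w<1=w\vee b$); both routes are sound.
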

\begin{proof}  Suppose $a,b\in L$, $b\not\leq a$: $(\Rightarrow)$ By hypothesis, there is a proper ideal $W$ such that $a\in W$ and $W\vee  b =L$.  Therefore, there is $u\in W$ such that $u\vee b = 1$.  Letting $w=u\vee a$, we have $a\leq w<1=w\vee b$.  
$(\Leftarrow)$.   By hypothesis, there is $w\in L$ such that $a\leq w<1=w\vee b$.  Let $W = \down w.$\end{proof}

Proposition 4 of \cite{J84b} is a version of this lemma for distributive lattices. 

\subsection{Problems}
\begin{enumerate}
\item It is natural to ask if $L/(R^1(\ideals L)|_L)$ is always ideally conjunctive.  We cannot mimic the proof of Theorem \ref{mainpierce1}, because $\ideals(L/(R^1(\ideals L)|_L))$ is not the same as $(\ideals L)/R^1$.  We conjecture that $L/(R^1(\ideals L)|_L)$ may fail to be ideally conjunctive.
\item The definition of ideally conjunctive refers to ideals, so it is not a first-order condition in the language of join-semilattices (as is the definition of conjunctivity).  However, this does not preclude the possibility that the property of being ideally conjunctive is first-order.  We conjecture that it is not.
\end{enumerate}

%%%%%%%%%%%%%%%%%%%%%%%%
%%%%%%%%%%%%%%%%%%%%%%%%
\section{Representation Theorems}
%%%%%%%%%%%%%%%%%%%%%%%%
%%%%%%%%%%%%%%%%%%%%%%%%

The purpose of the present section is to show that every conjunctive  join-semilattice is a join-semilattice of open sets forming a subbase for a compact $T_1$ space, and that any join-semilattice map between conjunctive  join-semilattices that satisfies a certain technical condition induces a continuous relation between the representation spaces from which we can recover the map.   Also, we show that every ideally conjunctive  join-semilattice is a join-semilattice of open sets forming a subbase for a $T_1$ space (not necessarily compact).  As an application, we give a generalization of a theorem of Martinez and Zenk.

\begin{exmp}  \textit{A conjunctive join-semilattice may have maximal ideals that are not prime.}   Indeed, the join-semilattice $L$ in Example \ref{maxnotprime} is conjunctive, as we can see by writing out the supercomplements of each element:
\begin{center}
\begin{tabular}{r||c|c|c|c|c}
If $a =$       & $1$ & $xy$              & $xz$             & $yz$              & $\emptyset$\\
\hline
$(1:a)_L =$ & $L$ & $\{1, xz,yz\}$ & $\{1, xy,yz\}$ & $\{1, xy,xz\}$ & $\{1\}$
\end{tabular} \end{center}    \end{exmp}

%%%%%%%%%%%%%%%%%%%%%%%%
\subsection{Representing conjunctive join-semilattices}\label{repthmsec}
%%%%%%%%%%%%%%%%%%%%%%%%

The representation theory for conjunctive join-semilattices rests on the following proposition.

\begin{prop}\label{conjmax}  Suppose $L$ is  a $\vee$-semilattice with $1$. Then $L$ is conjunctive if and only if: for all $a, b\in L$ such that $b\not\leq a$, there is a maximal ideal of $L$ that contains $a$ and does not contain $b$. \end{prop}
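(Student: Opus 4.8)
The plan is to prove the biconditional by treating ``conjunctive'' in terms of one of its first-order reformulations established before the proposition, namely the form: for all $a,b\in L$ with $b\not\leq a$ there is $w\in L$ with $a\leq w<1=w\vee b$. (This is item (4) in the list of equivalent conditions, and those items are interderivable without reference to this proposition, so there is no circularity.) I would also note at the outset that, since $L$ has $1$, an ideal is proper exactly when it omits $1$, so ``maximal ideal'' and ``maximal proper ideal'' coincide here; this lets me invoke Lemma~\ref{lemlem}.

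For the forward direction, I would start from $b\not\leq a$ and use conjunctivity to produce $w$ with $a\leq w<1=w\vee b$. Then $\down w$ is a proper ideal (as $w<1$) that contains $a$ and omits $b$ (if $b\leq w$ then $w\vee b=w\neq 1$). Applying Lemma~\ref{lemlem} with the element $b$, I extend $\down w$ to a $b$-maximal ideal $M$, which still contains $a$ and omits $b$. The crucial point is to upgrade $b$-maximality to genuine maximality: if $M\subsetneq N$ with $N$ proper, then $b$-maximality of $M$ forces $b\in N$, while $w\in\down w\subseteq M\subseteq N$; since $w\vee b=1$, this gives $1\in N$, contradicting properness. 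Hence $M$ is a maximal ideal containing $a$ and omitting $b$.

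For the reverse direction, I would assume the separation property and verify form (4). Given $b\not\leq a$, take a maximal ideal $M$ with $a\in M$ and $b\notin M$. Because $M\vee b$ is an ideal properly containing the maximal proper ideal $M$, it must equal $L$, so $1\in\langle M\cup\{b\}\rangle$ yields some $u\in M$ with $u\vee b=1$. Setting $w:=u\vee a$, I get $a\leq w$ immediately, and $w\vee b=(u\vee b)\vee a=1\vee a=1$; finally $w=u\vee a\in M$ so $w\neq 1$ (as $1\notin M$), giving $a\leq w<1=w\vee b$, which is exactly form (4). This establishes conjunctivity.

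I expect the main obstacle to be the upgrade step in the forward direction, where a $b$-maximal ideal must be shown to be maximal proper; the argument hinges on the ideal carrying a supercomplement of $b$ (the element $w$ with $w\vee b=1$), which is precisely what the conjunctivity witness supplies. The remaining verifications---properness of $\down w$, the computation $w\vee b=1$ in the converse, and extraction of $u$ from $M\vee b=L$---are routine once the right element is in hand.
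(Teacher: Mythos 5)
Your proof is correct and follows essentially the same route as the paper's: both directions turn on the witness $w$ with $a\leq w<1=w\vee b$ from formulation (4) of conjunctivity, an application of Lemma~\ref{lemlem}, and the extraction of $u\in m$ with $u\vee b=1$ in the converse. The only cosmetic difference is in the forward direction, where the paper extends $\down w$ directly to a ($1$-)maximal ideal $m$ and deduces $b\notin m$ from $w\vee b=1$, whereas you extend to a $b$-maximal ideal and then upgrade it to maximality using the same identity.
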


\begin{proof} $(\Rightarrow)$  Suppose $a, b\in L$ and $b\not\leq a$.  Select $w\in L$ such that $a\leq w < 1$ and $w\vee b =1$.  There is a maximal ideal $m$ that contains $w$ (and hence $a$).  Because $w\vee b =1$ and $w\in m$, we have $b\not\in m$.   $(\Leftarrow)$  Again, suppose $a, b\in L$ and $b\not\leq a$.  Let $m$ be a maximal ideal such that $a\in m$ and $b\not \in m$.  Then, there is $u\in m$ such that $u\vee b =1$.  Since $a\in m$, $u\vee a<1$.  For $w$, take $u\vee a$.  \end{proof}

\textit{For the remainder of this subsection, we assume $L$ is a conjunctive join-semilattice that contains at least two elements (including $1$).}     Let $\maxx L $ denote the set of all maximal ideals of $L$. For each $a\in L$, let $\widehat a: \maxx L \to \{0,1\}$ be defined by 
$$\widehat a (m): = \begin{cases} 0,& \text{if}\; a\in m;\\1,& \text{if}\; a\notin m.\end{cases}$$
In terms of the map $\phi$ defined after Example \ref{maxnotprime}, we have $\widehat a(m): = \phi_m(a)$.

\begin{lem}\label{replem} The map $$a\mapsto \widehat{a}: L\to \{0,1\}^{\maxx L }$$ is an injective $1$-$\vee$-morphism. 
\end{lem}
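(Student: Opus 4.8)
The plan is to verify three things: that $a\mapsto\widehat a$ preserves the top element, that it preserves $\vee$, and that it is injective. The first two are formal consequences of the definition of (maximal) ideal, while injectivity is where conjunctivity enters, via Proposition \ref{conjmax}. Throughout, recall that the codomain $\{0,1\}^{\maxx L}$ carries the pointwise order and pointwise operations, so its top element is the constant function $1$ and its join is computed coordinatewise; in other words it is the power $\mathbbm 2^{\maxx L}$, and the product of copies of the conjunctive semilattice $\mathbbm 2$.

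For preservation of $1$: since a maximal ideal is by definition a $1$-maximal ideal, it does not contain $1$, whence $\widehat 1(m)=1$ for every $m\in\maxx L$, so $\widehat 1$ is the constant function $1$. For preservation of $\vee$, I would check coordinatewise that $\widehat{a\vee b}(m)=\max\{\widehat a(m),\widehat b(m)\}$ for each $m$. This reduces to the observation that $a\vee b\in m$ if and only if $a\in m$ and $b\in m$: the forward direction uses that $m$ is a downset together with $a,b\leq a\vee b$, and the reverse uses that $m$ is closed under $\vee$. This is precisely the statement, recorded just after Example \ref{maxnotprime}, that $\phi_m$ (namely the evaluation $a\mapsto\widehat a(m)$) preserves $\vee$.

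The substantive step is injectivity, and I expect no real obstacle here, since the only nontrivial input has already been established. Suppose $a\neq b$; then $a\not\leq b$ or $b\not\leq a$, and after interchanging $a$ and $b$ if necessary I may assume $b\not\leq a$. By the forward implication of Proposition \ref{conjmax}, there is a maximal ideal $m$ with $a\in m$ and $b\notin m$, so $\widehat a(m)=0\neq 1=\widehat b(m)$ and hence $\widehat a\neq\widehat b$. The remaining care is purely bookkeeping: keeping the pointwise operations on $\mathbbm 2^{\maxx L}$ straight, and noting that $\maxx L$ is nonempty under the standing assumption that $L$ has at least two elements (any element $x\neq 1$ gives a proper ideal $\down x$, which by Lemma \ref{lemlem} extends to a maximal ideal)—though in fact Proposition \ref{conjmax} already supplies the separating $m$ directly whenever distinct $a,b$ exist.
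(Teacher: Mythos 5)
Your proposal is correct and follows essentially the same route as the paper's proof: injectivity via the forward direction of Proposition \ref{conjmax}, preservation of $1$ from the fact that maximal ideals omit $1$, and preservation of $\vee$ by the coordinatewise equivalence $a\vee b\in m\iff a\in m\;\&\;b\in m$. The extra bookkeeping you mention (nonemptiness of $\maxx L$, pointwise operations on $\mathbbm 2^{\maxx L}$) is harmless and consistent with the paper's conventions.
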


\begin{proof}  By Proposition \ref{conjmax}, for any two different elements of  $L$, then there is a maximal ideal of $L$ that contains one and not the other, so the map is injective.  It is clear that $\widehat 1$ is the constant function $1$ on $\maxx L $.  Let $m\in\maxx L $.  Then  $\widehat{a\vee b}\,(m) = 0$ iff $a\vee b \in m$ iff $a\in m\;\&\;b\in m$ iff $\widehat{a}(m) = 0\;\&\;\widehat{b}(m) = 0$  iff $\bigl(\,\widehat{a}\vee \widehat{b}\,\bigr)(m) = 0$.  Thus, $\widehat{a\vee b}=\widehat{a}\vee \widehat{b}$.
\end{proof}

\begin{defn}For each $a\in L$, let $\coz a:=\{\,m\in\maxx L \mid \widehat{a}(m) = 1\,\} = \{\,m\in\maxx L \mid a\not\in m\,\}$.   We call $\coz a$ the \textit{cozero set of $a$}.\end{defn}

\begin{rem} Throughout this paper, we use $\coz a$ to refer to a set of \textit{maximal} ideals.  Observe that $\widehat a$ is the characteristic function of $\coz a$.  The notation $\spec a$, which occurs in Theorem \ref{toprep}, has a similar definition, but it is a set of \textit{prime} ideals.  Since a join semilattice may have maximal ideals that are not prime and prime ideals that are not maximal, in general there is no relationship.  Under the distributive hypothesis (see Section \ref{djsl}), every maximal ideal is prime.    \end{rem}

Let $\mathcal{W}L$ be the weakest topology on $\maxx L $ in which $\coz a$ is open for each $a\in L$.  Let $\SpecM L$ denote $\maxx L$ with the topology $\mathcal{W}L$.

\begin{lem} $\SpecM L$ is $T_1$ and $a\mapsto \coz a$ is an isomorphism of $L$ with a subbase for $\SpecM L$ that is closed under finite joins.\end{lem}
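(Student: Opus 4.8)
The plan is to build everything on Lemma~\ref{replem} and to isolate the single genuinely new ingredient, namely the $T_1$ separation. Since $\widehat a$ is exactly the characteristic function of $\coz a$, Lemma~\ref{replem} translates mechanically into statements about the sets $\coz a$. First I would record that injectivity of $a\mapsto\widehat a$ is injectivity of $a\mapsto\coz a$, and that the identity $\widehat{a\vee b}=\widehat a\vee\widehat b$ is precisely $\coz(a\vee b)=\coz a\cup\coz b$. Together with $\coz 1=\maxx L$ (no maximal ideal contains $1$), this shows that the family $\{\coz a\mid a\in L\}$ contains the whole space and is closed under binary union, hence under all finite joins; this is the sense in which it is a join-closed subbase. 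By the very definition of $\mathcal WL$ as the weakest topology in which each $\coz a$ is open, this family is a subbase for $\SpecM L$.

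For the isomorphism claim I would then invoke the elementary fact that an injective $\vee$-homomorphism onto its image is automatically a $\vee$-semilattice isomorphism: the order on a join-semilattice is recovered from $\vee$, and the inverse of a bijective $\vee$-homomorphism is again a $\vee$-homomorphism. Thus $a\mapsto\coz a$ is a $1$-$\vee$-isomorphism of $L$ onto the subbase $\{\coz a\mid a\in L\}$ (the top $1$ going to the whole space $\maxx L$), and no further computation is required. Conjunctivity enters only through Lemma~\ref{replem} (and hence Proposition~\ref{conjmax}), which is what supplies the injectivity of $\coz$.

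For the $T_1$ claim I would argue directly from the subbase. Given distinct maximal ideals $m\neq m'$, the key observation is that two distinct maximal ideals are incomparable: a maximal ideal is a maximal \emph{proper} ideal (proper meaning ``not containing $1$'', since $1$ is the top), so $m\subseteq m'$ with $m\neq m'$ would exhibit a strictly larger proper ideal above $m$, contradicting maximality of $m$. Incomparability yields an element $a\in m'\setminus m$; then $a\notin m$ gives $m\in\coz a$, while $a\in m'$ gives $m'\notin\coz a$, so the subbasic open $\coz a$ contains $m$ but excludes $m'$. Swapping $m$ and $m'$ separates in the other direction, so every singleton is closed and $\SpecM L$ is $T_1$. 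I do not expect a real obstacle here: once Lemma~\ref{replem} is in hand the semilattice content is essentially free, and the only independent step is the $T_1$ separation, whose crux is the incomparability of distinct maximal ideals — a feature of maximality rather than of conjunctivity (indeed the $T_1$ property holds for the cozero topology on the maximal ideals of \emph{any} join-semilattice with $1$).
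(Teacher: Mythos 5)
Your proposal is correct and follows essentially the same route as the paper: the $T_1$ property comes from the incomparability of distinct maximal ideals (the paper phrases this as ``each fails to contain at least one element of the other''), the subbase claim is by definition of $\mathcal{W}L$, and the isomorphism is just the injective $\vee$-morphism of Lemma~\ref{replem} corestricted to its image. Your added observations --- that incomparability follows from maximality alone, and that a bijective $\vee$-morphism is automatically an isomorphism --- are accurate elaborations of steps the paper leaves implicit.
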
   

\begin{proof}   $\mathcal{W}L$ is $T_1$, because given any two maximal ideals, each fails to contain at least one element of the other, so each is in a cozero set not containing the other.  By definition of the topology, the cozero sets form a subbase.  The map $a\mapsto \coz a$ is an injective $\vee$-morphism by the previous lemma.\end{proof}

\begin{rem}  Let $L$ be a subset of $\mathcal P X$.  We say that $L$ is a \textit{$T_1$ subbase} if for any $x,y\in X$, there is $a\in L$ such that $x\in a$ and $y\not\in a$.  It is clearly the case that if $L$ is a $T_1$ subbase, then the topology that it generates is $T_1$.  Conversely, if $L$ is not a $T_1$ subbase, then there are points $x, y\in X$ such that for all $a\in L$, $x\in a\;\Leftrightarrow\;y\in a$.  Then the same thing is true for finite intersections of elements of $L$, and all unions of such sets---hence for the topology generated by $L$. \end{rem}

\begin{defn}For any $B\subseteq L$, we let $\langle B\rangle$ denote the ideal generated by $B$, and let $\llangle B\rrangle$ denote the intersection of all maximal ideals containing $B$. \end{defn}

\begin{exmp}  In general, $\langle B\rangle$ may be a proper subset of $\llangle B\rrangle$.   For example,  suppose $L$ is the usual topology of $[0,1]$, viewed as a join-semilattice.  By the representation theorem (see below), the maximal ideals of $L$ are the points of $[0,1]$.  The collection of all open subsets of $[0,1]$ that omit a neighborhood of $0$ is an ideal that is properly contained in $m_0 =$ the maximal ideal generated by $(0,1]$, but it is not contained in any other maximal ideal.  
\end{exmp}

\begin{lem}  The following are equivalent:
\begin{enumerate}
\item[$(i)$] $\coz a \subseteq \bigcup \{\,\coz b\mid b\in B\,\};$ 
\item[$(ii)$] $a\in \llangle B\rrangle;$
\item[$(iii)$] $\forall x \in L: x\vee a =1 \myimplies \exists\;b\in\langle B\rangle \;\mathrm{such\, that}\;x\vee b = 1.$
\end{enumerate}
\end{lem}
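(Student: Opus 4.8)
My plan is to prove the cycle $(i)\Rightarrow(ii)\Rightarrow(iii)\Rightarrow(i)$, after first dispatching $(i)\Leftrightarrow(ii)$ as a purely formal step. Throughout I would use the explicit description $\langle X\rangle=\{\,y\in L\mid y\leq\tbigvee X'\text{ for some finite }X'\subseteq X\,\}$ from Subsection~2.2, the fact (Lemma~\ref{lemlem}, with $a=1$) that every proper ideal is contained in a maximal one, and the observation that, since $L$ has at least two elements, every maximal ideal $m$ is nonempty and omits $1$. For $(i)\Leftrightarrow(ii)$ I would simply dualize the set inclusion: $m\in\coz a$ means $a\notin m$, and $m\notin\coz b$ means $b\in m$, so $\coz a\subseteq\bigcup\{\coz b\mid b\in B\}$ asserts exactly that every maximal ideal $m$ with $B\subseteq m$ also satisfies $a\in m$. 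As $\llangle B\rrangle$ is by definition the intersection of all maximal ideals containing $B$, this says precisely $a\in\llangle B\rrangle$. No further work is needed here.

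For $(ii)\Rightarrow(iii)$ I would argue by contraposition. Assuming $\neg(iii)$, fix $x$ with $x\vee a=1$ but $x\vee b\neq1$ for every $b\in\langle B\rangle$. The crucial point is that $\langle B\cup\{x\}\rangle$ is proper: every element of this ideal lies below $x\vee\tbigvee B'$ for some finite $B'\subseteq B$, and since $\tbigvee B'\in\langle B\rangle$ the hypothesis gives $x\vee\tbigvee B'\neq1$, so $1\notin\langle B\cup\{x\}\rangle$. Extending this proper ideal to a maximal ideal $m$ by Lemma~\ref{lemlem} yields $B\subseteq m$ and $x\in m$; moreover $a\notin m$, for otherwise $x\vee a=1$ would lie in $m$. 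Thus $m$ witnesses $a\notin\llangle B\rrangle$, which is $\neg(ii)$.

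For $(iii)\Rightarrow(i)$ I would fix $m\in\coz a$ and produce some $b'\in B$ with $m\in\coz{b'}$. Since $a\notin m$ and $m$ is maximal, $\langle m\cup\{a\}\rangle=L$; using that $m$ is nonempty and $\vee$-closed, this gives a $u\in m$ with $u\vee a=1$. Applying $(iii)$ with $x=u$ produces $b\in\langle B\rangle$ with $u\vee b=1$, whence $b\notin m$ (else $1=u\vee b\in m$). Writing $b\leq\tbigvee B'$ for a finite $B'\subseteq B$ and using that $m$ is a $\vee$-closed down-set, not all of $B'$ can lie in $m$, so some $b'\in B'\subseteq B$ escapes $m$, giving $m\in\coz{b'}$. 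This closes the cycle.

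The steps are all elementary, and I expect the one requiring genuine care to be the properness check in $(ii)\Rightarrow(iii)$: one must ensure that $1$ cannot sneak into $\langle B\cup\{x\}\rangle$, and this is exactly where the full strength of the hypothesis (quantifying over all $b\in\langle B\rangle$, not merely $b\in B$) is consumed. A secondary point is the finite-join decomposition in $(iii)\Rightarrow(i)$, where one passes from a single generator $b$ of $\langle B\rangle$ back to an actual member of $B$ by invoking the $\vee$-closedness of the maximal ideal $m$. Notably, the argument nowhere uses conjunctivity of $L$ beyond the standing assumption that $L$ has $1$ and at least two elements.
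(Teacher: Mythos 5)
Your proof is correct and follows essentially the same route as the paper's: the $(i)\Leftrightarrow(ii)$ dualization is identical, your $(ii)\Rightarrow(iii)$ is the paper's contrapositive argument with the properness of $\langle B\cup\{x\}\rangle$ spelled out, and your direct $(iii)\Rightarrow(i)$ is just the paper's $(iii)\Rightarrow(ii)$ step (extract $u\in m$ with $u\vee a=1$, then contradict $\langle B\rangle\subseteq m$) pushed through the already-established $(i)\Leftrightarrow(ii)$ translation. No gaps; the extra care you take with the finite-join decomposition and the nonemptiness of maximal ideals is welcome but not a substantive departure.
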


\begin{proof}
$(i\;\Leftrightarrow\; ii)$ Observe that  
$$\bigcup\{\,\coz b\mid b\in B\,\} = \{\,m\in\maxx L \mid B\not\subseteq m\,\}= \{\,m\in\maxx L \mid \llangle B\rrangle \not\subseteq m\,\}.$$  Thus, $(i)\iff \bigl(\forall m\in \maxx L,\;a\not\in m \myimplies \llangle B\rrangle \not\subseteq m\bigr)\iff (ii)$.   

$(iii\myimplies ii)$   Let $m$ be a maximal ideal containing $B$.  Toward a contradiction, suppose $a\not\in m$.  Then $x\vee a = 1$ for some $x\in m$.  Assuming $(iii)$, it follows that there is $b\in\langle B\rangle$ such that $x\vee b = 1$.  But this is impossible, since $x$ and $b$ are both in $m$.  Hence, $a\in m$.

$(ii\myimplies iii)$ Suppose $a$ does not satisfy $(iii)$.   Then there is $c\in L$ such that $c\vee a =1$, while $B\cup\{c\}$ is contained in a proper ideal, and hence is contained in a maximal ideal $m_0$.  Clearly, $a\not\in m_0$, so $a\not\in  \llangle B\rrangle$.
\end{proof}

\begin{lem} $\SpecM L$ is compact.\end{lem}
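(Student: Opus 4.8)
The plan is to prove compactness via the Alexander Subbase Theorem. Since the cozero sets $\{\,\coz a\mid a\in L\,\}$ form a subbase for $\mathcal{W}L$, it suffices to show that every cover of $\maxx L$ by \emph{cozero} sets has a finite subcover. So I would start from an arbitrary subbasic cover $\maxx L = \bigcup_{b\in B}\coz b$ and aim to extract a finite $F\subseteq B$ that already covers.

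The crux is to translate the covering condition into an algebraic statement about $B$. Unwinding the definitions, $m\in\coz b$ means $b\notin m$, so the cover condition says exactly that \emph{no} maximal ideal contains every element of $B$, i.e.\ no maximal ideal contains the ideal $\langle B\rangle$. Here Lemma~\ref{lemlem} is the essential tool: if $\langle B\rangle$ were proper, then, being a proper ideal not containing $1$, it would be contained in a $1$-maximal (that is, maximal) ideal, which would then contain $B$ --- contradicting the cover condition. Hence $\langle B\rangle = L$, so $1\in\langle B\rangle$, and by the explicit description of the generated ideal this forces $\bigvee F = 1$ for some finite $F\subseteq B$. (Equivalently, one may read this off the preceding lemma with $a=1$: the cover condition is $\coz 1\subseteq\bigcup_{b\in B}\coz b$, which by $(i)\Leftrightarrow(ii)$ gives $1\in\llangle B\rrangle$, again forcing $\langle B\rangle = L$.)

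To finish, I would invoke that $a\mapsto\coz a$ preserves finite joins (Lemma~\ref{replem}): since $\bigvee F = 1$ we get $\bigcup_{b\in F}\coz b = \coz(\bigvee F) = \coz 1 = \maxx L$, so $\{\,\coz b\mid b\in F\,\}$ is the required finite subcover. Alexander's Subbase Theorem then yields that $\SpecM L$ is compact.

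The only genuine obstacle is the middle step: passing from ``the cozero sets cover'' to ``$B$ generates the improper ideal.'' This is precisely where the existence of enough maximal ideals, guaranteed by Lemma~\ref{lemlem}, is indispensable --- without it a proper ideal could fail to sit inside any maximal ideal, and the argument would break down. The reduction to subbasic covers via Alexander's theorem is what makes the finiteness drop out cleanly; everything surrounding it is routine.
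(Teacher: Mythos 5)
Your proof is correct and follows essentially the same route as the paper: reduce to subbasic covers via the Alexander Subbase Theorem, show that covering $\maxx L$ forces $1\in\langle B\rangle$, and use compactness of principal ideals to extract a finite join equal to $1$. The only cosmetic difference is that you derive the key equivalence directly from Lemma~\ref{lemlem} (Zorn), whereas the paper cites the $(i)\Leftrightarrow(ii)\Leftrightarrow(iii)$ lemma with $a=1_L$ --- a route you also note parenthetically.
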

\begin{proof} Setting $a=1_L$  in the previous lemma and letting $x$ be any element of $B$  we see that $\coz 1_L \subseteq \bigcup \{\,\coz b\mid b\in B\,\}$ if and only if $1\in\langle B\rangle$.  Thus any cover of $\SpecM L(=\coz 1_L )$ by elements of the subbase $\{\,\coz b\mid b\in L\,\}$ has a finite subcover.  Compactness of $\SpecM L$ then follows from the Alexander subbase theorem.   \end{proof}

\begin{lem}\label{replemma} Suppose $X$ is a set with at least two elements and $L\subseteq \mathcal P X$ is a $T_1$ subbase for a compact topology on $X$.  Suppose further that $L$ contains $X$ and is closed under joins.  Then $L$ is conjunctive, and the map $x\mapsto m_x:=\{\,a\in L\mid x\not\in a\,\}$ is a homeomorphism of $X$ with $\SpecM L$.\end{lem}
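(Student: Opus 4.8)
The plan is to show that $x\mapsto m_x$ is a bijection from $X$ onto $\maxx L$ which carries each subbasic open set $a\in L$ onto the corresponding subbasic open set $\coz a$, and then to read off both conjunctivity and the homeomorphism from this correspondence. The organizing observation is that, under the candidate bijection, the preimage of $\coz a=\{\,m\in\maxx L\mid a\notin m\,\}$ is exactly $a$: indeed $m_x\in\coz a$ means $a\notin m_x$, which means $x\in a$. Since the topology on $X$ is generated by $L$ as a subbase and the topology $\mathcal W L$ on $\maxx L$ is generated by $\{\,\coz a\mid a\in L\,\}$ as a subbase, once the set map is shown to be a bijection matching these two subbases the homeomorphism is immediate.

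First I would check that each $m_x=\{\,a\in L\mid x\notin a\,\}$ is a maximal ideal. It is a down-set closed under joins, since membership is governed by the single condition $x\notin a$, and $x\notin X$ gives $X\notin m_x$, so $m_x$ is proper. For maximality I would take any $a\in L$ with $a\notin m_x$, so $x\in a$, and show that $m_x\cup\{a\}$ generates all of $L$. For each point $y\in X\setminus a$ we have $y\neq x$, so the $T_1$-subbase property supplies $c_y\in L$ with $y\in c_y$ and $x\notin c_y$, i.e.\ $c_y\in m_x$. Then $\{a\}\cup\{\,c_y\mid y\in X\setminus a\,\}$ is a cover of $X$ by open sets, so compactness yields finitely many $c_{y_1},\dots,c_{y_n}$ with $a\cup c_{y_1}\cup\dots\cup c_{y_n}=X$; the join $c_{y_1}\vee\dots\vee c_{y_n}$ lies in $m_x$, whence $X=1_L\in\langle m_x\cup\{a\}\rangle$. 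Thus every ideal properly containing $m_x$ contains $1$, so $m_x$ is maximal.

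Next I would establish that the map is a bijection. Injectivity is precisely the $T_1$-subbase property: distinct $x,y$ are separated by some $a\in L$, which lies in one of $m_x,m_y$ but not the other. For surjectivity, given a maximal ideal $m$ I would note that $\bigcup m\neq X$, for otherwise the elements of $m$ would form an open cover of $X$ whose finite subcover, being a join of members of $m$, would place $1_L$ in $m$, contradicting properness. Choosing $x\in X\setminus\bigcup m$ gives $m\subseteq m_x$, and since $m$ is maximal while $m_x$ is proper we conclude $m=m_x$. Conjunctivity of $L$ then follows from Proposition \ref{conjmax}: if $b\not\leq a$ in $L$ there is a point $y\in b\setminus a$, and $m_y$ is a maximal ideal containing $a$ but not $b$.

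Finally, the homeomorphism is formal. The preimage of the subbasic set $\coz a$ is $a$, which is open in $X$, so the map is continuous; and because every maximal ideal is some $m_x$, the image of $a$ is $\{\,m_x\mid x\in a\,\}=\coz a$, which is open in $\maxx L$, so the map is also open. A continuous open bijection is a homeomorphism. The main obstacle is the pair of steps that genuinely use the hypotheses beyond pure set theory, namely the maximality of $m_x$ and the surjectivity onto $\maxx L$; both hinge on combining the $T_1$-subbase property (to manufacture covers that avoid a prescribed point) with compactness (to extract a finite subcover and thereby force $1_L$ into the relevant ideal). Everything else is bookkeeping built on the identity that the preimage of $\coz a$ is $a$.
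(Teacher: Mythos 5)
Your proof is correct and follows essentially the same route as the paper's: maximality of $m_x$ via the $T_1$-subbase property plus compactness, injectivity from the $T_1$-subbase property, surjectivity from compactness (you phrase it as $\bigcup m\neq X$ where the paper uses the finite intersection property of the closed complements, which is the same argument), conjunctivity via Proposition \ref{conjmax}, and the homeomorphism from the subbase correspondence $a\leftrightarrow\coz a$. The only blemish is the slip ``$x\notin X$ gives $X\notin m_x$,'' which should read ``$x\in X$ gives $X\notin m_x$.''
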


\begin{rem}The conclusions of the theorem are also true if $X$ contains a single point and $L=\mathcal P X$.\end{rem} 

\begin{proof}  First, we show that for each $x\in X$, $m_x$ is a maximal ideal of $L$.  By the $T_1$ hypothesis, $m_x$ is not empty.   Suppose  $a\in L\setminus m_x$ (so $x\in a$). Since $L$ is a $T_1$ subbase, for every $y\in X\setminus \{x\}$, $m_x$ contains an open neighborhood $b_y$ of $y$.    Because $X$ is compact, there is a finite set $Y\subset X\setminus\{x\}$ such that $X\subseteq a\vee b$, where $b=\bigvee\{\,b_y\mid y\in Y\,\}\in m_x$.   Second, we show that $L$ is conjunctive.   Suppose $a,b\in L$ and  $b\not\subseteq a$.  Pick $x\in b\setminus a$.  Then $m_x$ contains $a$ and does not contain $b$.  By Proposition \ref{conjmax}, $L$ is conjunctive.  Third, we show that the map $x\mapsto m_x: X\to \SpecM L$ is bijective.  It is injective by the $T_1$ subbase hypothesis.  It is surjective, for suppose $m\in \SpecM L$.  Since $m$ is an ideal, $\{\, X\setminus a\mid a\in m\,\}$ is a family of closed subsets of $X$ with the finite intersection property, and since $X$ is compact, there is at least one point that all these sets have in common.  There can be no more than one, since $m$ is maximal.  Finally, $x\mapsto m_x$ is a homeomorphism: by definition, $\coz a =  \{\,m_x\mid x\not\in a\}$, so $a\mapsto \coz a$ is a bijection  between subbases for the topologies on $X$ and $\SpecM L$.\end{proof}

The following theorem summarizes all the lemmas in this subsection.

\begin{thm}[Representation Theorem for Conjunctive Join-Semilattices]\label{repthm}$\phantom{x}$

$(i)$ Suppose $L$ is a conjunctive join-semilattice with at least two elements.  Let $\maxx L$ be the set of maximal ideals of $L$ and let \begin{align*}\coz: L&\to \mathcal{P}(\maxx L)\\ a&\mapsto \coz a:=\{\,m\in \maxx L\mid a\notin m\,\}.\end{align*}  Then $\coz$ is a join-semilattice injection and its image $\coz L$ is a subbase for a compact $T_1$ topology (which we call $\mathcal{W}L$) on $\maxx L$.

$(ii)$ Suppose $X$ is a non-empty set and $\mathcal{T}$ is a compact $T_1$ topology on $X$.  Further, suppose that $L$ is a subbase for $\mathcal{T}$ that is closed under finite unions.  Then $L$ is conjunctive and $x\mapsto m_x:=\{\,a\in L\mid x\notin a\,\}$ is a homeomorphism of $(X, \mathcal{T})$ with $(\maxx L, \mathcal{W}L)$. \qed   \end{thm}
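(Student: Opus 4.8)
The plan is to assemble the theorem directly from the lemmas already proved in this subsection, since every assertion of the two parts has been established piecemeal; the only work is to check that the hypotheses line up, and, for part $(ii)$, to bridge the small gap between the hypotheses stated here and those of Lemma~\ref{replemma}.

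For part $(i)$, I would argue that nothing new is required. Lemma~\ref{replem} gives that $a\mapsto\widehat a$ is an injective $1$-$\vee$-morphism into $\{0,1\}^{\maxx L}$; since $\widehat a$ is the characteristic function of $\coz a$, this says exactly that $\coz$ is an injective join-semilattice morphism, so $L\cong\coz L$. The lemma asserting that $\SpecM L$ is $T_1$ and that $a\mapsto\coz a$ carries $L$ isomorphically onto a subbase closed under finite joins supplies the subbase and separation claims, and the compactness lemma supplies compactness. Concatenating these three statements yields $(i)$ verbatim.

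For part $(ii)$, the strategy is to reduce to Lemma~\ref{replemma}, whose conclusion is precisely that $x\mapsto m_x$ is a homeomorphism of $X$ with $\SpecM L$ and that $L$ is conjunctive. To invoke it I must recover its three hypotheses from the weaker ones given here. First I would treat the case $|X|\ge 2$, deferring the single-point case to the Remark following Lemma~\ref{replemma}. Then I would verify that $L$ is a $T_1$ subbase in the sense defined above: given distinct $x,y\in X$, the $T_1$ property of $\mathcal T$ furnishes an open set containing $x$ but not $y$; writing it as a union of finite intersections of members of $L$ and selecting one such intersection that contains $x$, some factor $a\in L$ satisfies $x\in a$ and $y\notin a$. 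Next I would establish $X\in L$: for $|X|\ge 2$ the $T_1$-subbase property forces the members of $L$ to cover $X$, so by compactness finitely many of them already cover $X$, and closure under finite unions then gives $X\in L$ (this also provides the top element that makes ``maximal ideal'' meaningful). With ``$L$ is a $T_1$ subbase,'' ``$X\in L$,'' and ``closed under finite unions'' in hand, Lemma~\ref{replemma} applies and delivers the stated conclusion, and the Remark disposes of the one-point case.

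I expect the only genuine obstacle to be the step establishing $X\in L$ in part $(ii)$: the hypotheses guarantee closure under \emph{finite} unions only, so membership of the whole space must be recovered from compactness together with the covering property of the subbasic sets, which in turn relies on the $T_1$ separation hypothesis and on $X$ having more than one point. Everything else is bookkeeping that matches the conclusions of the individual lemmas to the wording of the theorem.
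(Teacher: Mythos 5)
Your proposal is correct and matches the paper's approach exactly: the paper offers no separate proof, stating only that the theorem ``summarizes all the lemmas in this subsection,'' which is precisely your assembly of Lemma~\ref{replem}, the $T_1$/subbase lemma, the compactness lemma, and Lemma~\ref{replemma}. Your bridging argument for part $(ii)$ --- deducing the $T_1$-subbase property from the $T_1$ separation axiom and recovering $X\in L$ from compactness plus closure under finite unions --- correctly supplies a detail the paper leaves implicit.
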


\subsection{Functoriality} 

We begin by summarizing the functorial nature of Stone's representation for distributive lattices.  
Let $A$ be a bounded distributive lattice.  $\Spec A$ denotes the set of prime ideals of $A$.   For each $a\in A$, define $\widehat a:\Spec A\to \{0,1\}$ by $$\widehat{a}(p) = \begin{cases}0&\text{if $a\in p$}\\1&\text{otherwise.}\end{cases}$$  Suppose $\phi:A\to B$ is a morphism of bounded distributive lattices.    
For $q\in \Spec B$, define $f_\phi(q):=\phi^{-1}(q) \in \Spec A$. 
Then $f_\phi$ is a function from $\Spec B$ to $\Spec A$.  Moreover  $\widehat a\circ f_\phi = \widehat {\phi(a)}$.    The usual topology on $\Spec A$ is the weakest topology in which $\widehat a^{-1}(1)$ is open for all $a\in A$ (and similarly for $B$).  With respect to these topologies, $f_\phi$  is continuous, for $f_\phi^{-1}(\widehat a^{-1}(1)) =(\widehat a\circ f_\phi)^{-1}(1) = \widehat {\phi(a)}^{-1} (1)$, so the inverse image of any basic open is open. 

If we attempt to replicate this in the category of conjunctive join semilattices (using maximal ideals rather than prime ideals), some modifications are necessary.  First, there is no hope of representing every morphism of conjunctive join-semilattices for the following reason.  We have mentioned above that for any ideal $I$ in a join-semilattice $L$, the map $\phi_I:L\to \{0,1\}$ defined by $\phi_I(a) = 0$ iff $a\in I$ is a $1$-$\vee$-morphism, and we have given examples that illustrate that there is a conjunctive join-semilattice $L$ with an ideal $I\subseteq L$ that is \textit{not} an intersection of maximal ideals of $L$. For such an ideal, there is evidently no way to determine $\phi_I(a)$ for each $a\in L$ from $\widehat a$, since the only information we can extract from $\widehat a$ is the set of maximal ideals to which $a$ belongs.  We address this problem by excluding such deviant morphisms from consideration.  We will seek representations only for morphisms that satisfy the following definition. 

\begin{defn} \label{conjmorph} Suppose $\phi:L\to M$ is a $1$-$\vee$-morphism of join-semilattices.  We say that $\phi$ is a \textit{conjunctive morphism} if $\phi^{-1}(w)$ is an intersection of maximal ideals of $L$ whenever $w$ is a maximal ideal of $M$.\end{defn}

The second problem is that even when $\phi$ is conjunctive, taking inverse images does not yield a function from $\SpecM M$ to $\SpecM L$.  We must deal with the fact that for $w\in \SpecM M$, $\phi^{-1}(w)$ is in general only an intersection of maximal ideals.  We address this problem by replacing $f_\phi$ with a multi-valued function $Q_\phi$.  Using a relation $Q_\phi$ in place of a function $f_\phi$ raises a third problem.  How do we compose $\widehat a$ with a relation?  The idea is to use the join in the complete semilattice $\mathbbm 2$, as we show in the next paragraph. 

To simplify notation, we use $X_L$ as shorthand for $\SpecM L$.  Let $X_L$ and $X_M$ be the representation spaces for conjunctive join-semilattices $L$ and $M$.  
Suppose $a \in L$ and $\widehat a :X_L \to \mathbbm{2} = \{0,1\}$ is its representation.  
Let $Q\subset X_M\times X_L$ be any relation such that for all $w\in X_M$, there exists at least one $v\in X_L$ such that $(w,v)\in Q$.  Thus, $Q:w\mapsto v$ is a ``multi-valued function'' from $X_M$ to $X_L$.  
For any $a\in L$, define $\bigvee\,\widehat{a}\circ Q: X_M\to \mathbbm 2$ 
by:
$$\bigvee\, \widehat a \circ Q(w):= \bigvee\{ \widehat a(v)\mid v\in X_L\;\&\;(w,v)\in Q\,\}.$$  
Note that 
$$\bigvee\, \widehat{1_L} \circ Q  = \widehat{1_M}.$$
Now suppose $b \in L$ and $\widehat b :X_L \to \mathbbm 2$ is its representation.  For any $w\in X_M$,
\begin{align*}
\bigvee\, \widehat{a\vee b} \circ Q(w)
     &= \bigvee\bigl\{\,\widehat{a\vee b}\,(v)\bigm| v\in X_L\;\&\;(w,v)\in Q\,\bigr\}\\
     &= \bigvee\bigl\{\,\widehat{a}(v)\vee \widehat{b}(v)\bigm| v\in X_L\;\&\;(w,v)\in Q\,\bigr\}\\
     &= \bigvee\bigl\{\,\widehat{a}(v)\bigm| v\in X_L\;\&\;(w,v)\in Q\,\bigr\}\,\vee\,\bigvee\bigl\{\, \widehat{b}(v)\bigm| v\in X_L\;\&\;(w,v)\in Q\,\bigr\}\\
     &= \bigl(\,\bigvee\, \widehat{a}\circ Q\,\bigr)(w)\vee \bigl(\,\bigvee\, \widehat b\circ Q\,\bigr)(w)
\end{align*}
Thus, we see that $a\mapsto \bigvee\, \widehat a\circ Q$ is a $1$-$\vee$-morphism from $L$ to $\mathbbm 2^{X_M}$.  Whether or not $\bigvee\, \widehat a\circ Q$ lies in $\widehat M$ depends, of course, on $Q$.

We now show that for any conjunctive morphism $\phi:L\to M$ between conjunctive join-semilattices, we have a natural relation $Q_\phi$ that induces $\phi$.  Let
$$Q_\phi:=\{\,(w,v)\in X_M\times X_L\mid \phi^{-1}(w)\subseteq v\,\}.$$

\begin{prop} If $\phi:L\to M$ is a conjunctive morphism between conjunctive join-semilattices, then $\bigvee\, \widehat{a} \circ Q_\phi=\widehat{\phi(a)}$ for all $a\in L$.   \end{prop}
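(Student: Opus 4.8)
The plan is to verify the equality pointwise on $X_M$, so fix a maximal ideal $w\in X_M$. Since both $\bigvee\,\widehat a\circ Q_\phi$ and $\widehat{\phi(a)}$ take values in $\mathbbm 2=\{0,1\}$, it suffices to show that they vanish at $w$ under exactly the same circumstances. First I would record that $\phi^{-1}(w)$ is a \emph{proper} ideal of $L$: it is an ideal because $\phi$ preserves $\vee$ and hence order, and it is proper because $\phi(1_L)=1_M\notin w$ forces $1_L\notin\phi^{-1}(w)$. By Lemma \ref{lemlem} it is therefore contained in at least one maximal ideal, so the fiber $\{\,v\in X_L\mid (w,v)\in Q_\phi\,\}=\{\,v\in\maxx L\mid \phi^{-1}(w)\subseteq v\,\}$ is non-empty and the supremum defining $\bigvee\,\widehat a\circ Q_\phi(w)$ is taken over a non-empty set.

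Next I would unwind the definition of the join. Since $\widehat a(v)=1$ exactly when $a\notin v$, the computation runs as follows:
\begin{align*}
\bigvee\,\widehat a\circ Q_\phi(w)=0 &\iff \widehat a(v)=0 \text{ for every } v\in\maxx L \text{ with } \phi^{-1}(w)\subseteq v\\
&\iff a\in v \text{ for every maximal ideal } v\supseteq \phi^{-1}(w)\\
&\iff a\in\llangle\phi^{-1}(w)\rrangle,
\end{align*}
the last line being merely the definition of $\llangle\,\cdot\,\rrangle$ as the intersection of all maximal ideals containing $\phi^{-1}(w)$.

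The crux is the hypothesis that $\phi$ is conjunctive, which says $\phi^{-1}(w)$ is an intersection of maximal ideals of $L$. I would first note the elementary fact that any ideal $I$ that is an intersection of maximal ideals satisfies $I=\llangle I\rrangle$: the inclusion $I\subseteq\llangle I\rrangle$ is automatic, and if $I=\bigcap_i m_i$ with each $m_i$ maximal, then each $m_i$ is one of the maximal ideals containing $I$, whence $\llangle I\rrangle\subseteq\bigcap_i m_i=I$. Applying this with $I=\phi^{-1}(w)$ gives $\llangle\phi^{-1}(w)\rrangle=\phi^{-1}(w)$. Combining this with the computation above yields
$$\bigvee\,\widehat a\circ Q_\phi(w)=0\iff a\in\phi^{-1}(w)\iff \phi(a)\in w\iff \widehat{\phi(a)}(w)=0,$$
and since both functions are $\mathbbm 2$-valued this proves $\bigvee\,\widehat a\circ Q_\phi=\widehat{\phi(a)}$.

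I do not anticipate a genuine obstacle: the only point requiring care is that $Q_\phi$ really is a \emph{total} multi-valued function, so that the supremum over the fiber of $w$ is not an empty one (which would collapse to $0$ and break the identity when $\phi(a)\notin w$). This is exactly why I begin by checking that $\phi^{-1}(w)$ is proper. The conceptual content is simply that the definition of conjunctive morphism is tailored precisely so that $\phi^{-1}(w)$ is $\llangle\,\cdot\,\rrangle$-closed, collapsing the supremum over the fiber to the single membership test $\phi(a)\in w$.
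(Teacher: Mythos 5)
Your proof is correct and follows essentially the same route as the paper's: evaluate pointwise at $w\in X_M$, reduce the vanishing of the supremum to membership of $a$ in every maximal ideal containing $\phi^{-1}(w)$, and collapse that to $a\in\phi^{-1}(w)$ using the conjunctivity of $\phi$. Your explicit check that $\phi^{-1}(w)$ is proper (so the fiber of $Q_\phi$ over $w$ is non-empty) is a detail the paper leaves implicit, and it is a worthwhile one.
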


\begin{proof} 
For any maximal ideal $w$ of $M$,
$$\bigvee\,\widehat{a} \circ Q_\phi(w)
     = \bigvee\{ \widehat{a}(v)\mid v\in X_L\;\&\;\phi^{-1}(w)\subseteq v\,\},$$
by the definitions of $\bigvee\,\widehat{a} \circ Q$ and $Q_\phi$.  Thus, for any $w$,
\begin{align*}
\bigvee\,\widehat{a} \circ Q_\phi(w) =0 
     &\iff\; \text{for all $v\supseteq \phi^{-1}(w)$, $\widehat{a}(v) = 0$}\\
     &\iff\; \text{for all $v\supseteq \phi^{-1}(w)$, $a\in v$}\\
     &\iff\; a\in \phi^{-1}(w)&&\text{since $\phi$ is conjunctive}\\
     &\iff\; \phi(a)\in w\\
     &\iff\;\widehat{\phi(a)}(w)=0.&&\qedhere
\end{align*}
\end{proof}

It is interesting to ask if $Q_\phi^{-1}(V):=\{\,w\in X_M\mid \exists v\in V\;\phi^{-1}(w)\subseteq v \,\}$ is an open subset of $\SpecM M$, whenever $V$ is an open subset of $\SpecM L$.  Note first that
\begin{align*}
w\in Q_\phi^{-1} (\coz a) 
\iff &\exists v\in X_L\;\text{such that}\; \phi^{-1}(w)\subseteq v\;\&\;a\not\in  v\\
\iff  & a\not\in \phi^{-1}(w)\\
\iff & \phi(a)\not\in w\\
\iff & w \in \coz \phi(a).
\end{align*}
From this, we see that for any $a\in L$, $Q^{-1}(\coz a) = \coz\phi(a)$.  Second, note that for any collection $\{A_j\}_{j\in J}$ of subsets of $X_L$, and any relation $Q:X_M\to X_L$, $Q^{-1}(\bigcup_{j\in J}A_j) = \bigcup_{j\in J}Q^{-1}(A_j)$.  Intersections of cozero sets, however, are problematic.  If $a,b\in L:
$\begin{align*}
w\in Q_\phi^{-1} (\coz a\cap \coz b) 
\iff &\exists v\in X_L\;\text{such that}\; \phi^{-1}(w)\subseteq v\;\&\;a\not\in  v\;\&\;b\not\in  v\\
\Longrightarrow\; & a\not\in \phi^{-1}(w)\;\&\; b\not\in \phi^{-1}(w)\\
\iff & \phi(a)\not\in w\;\&\; \phi(b)\not\in w\\
\iff & w \in \coz \phi(a)\cap\coz \phi(b).
\end{align*}
This shows that $Q^{-1}(\coz a\cap\coz b) \subseteq  \coz\phi(a)\cap \coz\phi(b)$, but we see no reason to expect equality, since the complement of $\phi^{-1}(w)$ need not be a filter.  We do not know if  $Q^{-1}(\coz a\cap\coz b)$ is open.  Perhaps there is some other form of continuity for multifunctions that $Q$ satisfies.

\begin{exmp}  Let $L$ be the distibutive lattice of all cofinite subsets of $\N$, together with $0_L = \emptyset$, so $X_L = \{\,m_x\mid x\in \N\,\}$, where $m_x$ is the maximal ideal consisting of all elements of $L$ that do not contain $x$.  Note that $\{0_L\}$ is a prime ideal of $L$ that is not maximal, but it is the intersection of all the maximal ideals.  Let $M = \{0_M,1_M\}$, so $X_M$ is the one-point space, $\{\ast\}$, where $\ast:=\{0_M\}$. Let $\phi: L\to M$ satisfy $\phi(a) = 0$ iff $a=\emptyset$.     Then, $\phi$ is a lattice morphism, and evidently $Q_\phi = X_M\times X_L$.  If $a\in L$, $a \not= 0_L$ then $Q^{-1}(\coz a) = \{\ast\}$.  On the other hand, $\coz 0_L =\emptyset$ and $Q^{-1}(\coz 0_L) = \emptyset = \coz 0_M$.\end{exmp}

\subsection {Representing Ideally Conjunctive Join-semilattices}

We prove an analog of Proposition \ref{conjmax} for ideally conjunctive join-semilattices.

\begin{prop}\label{idealconjmax}  Suppose $L$ is  a join-semilattice. Then $L$ is ideally conjunctive if and only if: for all $a, b\in L$ such that $b\not\leq a$, there is a maximal proper ideal of $L$ that contains $a$ and does not contain $b$. \end{prop}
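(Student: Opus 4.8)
The plan is to mimic the proof of Proposition~\ref{conjmax}, systematically replacing the top element $1$ by the improper ideal $L=1_{\ideals L}$, replacing supercomplements by ideal supercomplements $W\in\ideals L$ (so the defining equation becomes $W\vee b=L$), and replacing maximal ideals by maximal proper ideals. Concretely, I would run the argument through item~4 of the list of reformulations preceding the proposition: $L$ is ideally conjunctive if and only if for all $a,b\in L$ with $b\not\leq a$ there is $W\in\ideals L$ with $a\in W\neq L=W\vee b$.

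For the forward implication, assume $L$ is ideally conjunctive and fix $a,b\in L$ with $b\not\leq a$. Item~4 provides a proper ideal $W$ with $a\in W$ and $W\vee b=L$; note $b\notin W$, since otherwise $W\vee b=W\neq L$. Because $W$ is proper and does not contain $b$, Lemma~\ref{lemlem} (applied with $b$ in place of $a$) embeds $W$ in a $b$-maximal ideal $m$. The one genuinely new step is to promote $m$ from $b$-maximal to maximal proper: if $n$ is any ideal with $n\supsetneq m$, then $b\in n$ by $b$-maximality, and since $W\subseteq m\subseteq n$ we get $L=W\vee b=\langle W\cup\{b\}\rangle\subseteq n$, forcing $n=L$. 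Hence $m$ is a maximal proper ideal, and by construction $a\in W\subseteq m$ while $b\notin m$, which is exactly the separation condition.

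For the converse, assume the separation condition and fix $a,b$ with $b\not\leq a$. Choose a maximal proper ideal $m$ with $a\in m$ and $b\notin m$, and set $W:=m$. Then $W$ is proper and contains $a$. Since $m\vee b=\langle m\cup\{b\}\rangle$ is an ideal that properly contains the maximal proper ideal $m$, it must equal $L$; thus $a\in W\neq L=W\vee b$, which is item~4, so $L$ is ideally conjunctive.

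The main obstacle is the single step in the forward direction where a $b$-maximal ideal is shown to be maximal proper. This is precisely where the hypothesis $W\vee b=L$ is indispensable, and it is the only place where the proof departs from the bounded case of Proposition~\ref{conjmax} (there the presence of $1$ makes the $1$-maximal ideals literally the maximal proper ideals, so no promotion is needed). The remaining steps are routine unwinding of the definitions of ideal, ideal supercomplement, and maximal proper ideal.
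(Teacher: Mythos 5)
Your proposal is correct and follows essentially the same route as the paper: in the forward direction the paper also extends the proper ideal $W$ (with $a\in W$ and $W\vee b=L$) to a $b$-maximal ideal and promotes it to a maximal proper ideal by exactly your argument, and in the converse both proofs take $W:=m$ and use maximality of $m$ to get $m\vee b=L$ while $m\vee a=m\neq L$. The only cosmetic difference is that the paper phrases the converse via distinct sets of ideal supercomplements (reformulation 2) rather than reformulation 4, which the paper itself lists as clearly equivalent.
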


\begin{proof} $(\Rightarrow)$  Suppose $a, b\in L$ and $b\not\leq a$.  Select $W\in \ideals L$ such that $W\vee b =1$ and $a\in W \not= L$.  Clearly, $b\not\in W$, so there is an ideal $W'$ containing $W$ and maximal missing $b$.  Such a $W'$ is maximal proper, because any ideal that properly contains $W'$ contains both $W$ and $b$ and hence is equal to $L$. $(\Leftarrow)$  Suppose $a$ and $b$ are different elements of $L$.  Without loss of generality, $b\not\leq a$.  Let $m$ be a maximal ideal of $L$ that contains $a$ and does not contain $b$.  Then $m$ is a supercomplement of $b$ but not of $a$, so  $a$ and $b$ have different sets of supercomplements.
\end{proof}

The following corollary is analogous to part $(i)$ of the Representation Theorem for Conjunctive Join-Semilattices  (Theorem \ref{repthm}).  This corollary is weaker in that it does not assert that $(\maxx L, \mathcal{W}L)$ is compact.  The proof parallels that of Theorem \ref{repthm}.

\begin{cor}\label{dlrep} Suppose $L$ is an ideally conjunctive join-semilattice with at least two elements.  Let $\maxx L$ be the set of maximal proper ideals of $L$ and let \begin{align*}\coz: L&\to \mathcal{P}(\maxx L)\\ a&\mapsto \coz a:=\{\,m\in \maxx L\mid a\notin m\,\}.\end{align*}  Then $\coz$ is a join-semilattice injection and its image $\coz L$ is a subbase for a $T_1$ topology $\mathcal{W}L$ on $\maxx L$.\qed\end{cor}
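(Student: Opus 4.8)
The plan is to follow the proof of Theorem \ref{repthm}$(i)$ essentially verbatim, systematically replacing each appeal to maximal ideals and Proposition \ref{conjmax} by the corresponding appeal to maximal proper ideals and Proposition \ref{idealconjmax}, and then simply omitting the final compactness step. Concretely, there are four assertions to verify: that $\coz$ is injective, that it is a $\vee$-morphism, that $\coz L$ is a subbase for $\mathcal{W}L$, and that $\mathcal{W}L$ is $T_1$.

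First I would establish injectivity together with the $\vee$-morphism property, which together amount to the analog of Lemma \ref{replem}. For injectivity, given distinct $a,b\in L$ at least one of $a\not\leq b$ or $b\not\leq a$ holds; say $b\not\leq a$. By Proposition \ref{idealconjmax} there is a maximal proper ideal $m$ with $a\in m$ and $b\notin m$, so $m\in\coz b\setminus\coz a$ and hence $\coz a\neq\coz b$. For the $\vee$-morphism property the computation is identical to Lemma \ref{replem}: since every maximal proper ideal $m$ is a down-closed $\vee$-subsemilattice, $a\vee b\in m$ iff both $a\in m$ and $b\in m$, whence $\coz(a\vee b)=\coz a\cup\coz b$, which is exactly the join in $\mathcal P(\maxx L)$. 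The subbase assertion then needs no argument at all: $\mathcal{W}L$ is by definition the weakest topology making every $\coz a$ open, so the sets $\coz a$ form a subbase.

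For the $T_1$ property I would reproduce the reasoning used to show $\SpecM L$ is $T_1$. Any two distinct maximal proper ideals $m_1,m_2$ are incomparable: if $m_1\subsetneq m_2$ then $m_2$ would be a proper ideal properly containing the maximal proper ideal $m_1$, a contradiction. Hence I may pick $a\in m_1\setminus m_2$ and $b\in m_2\setminus m_1$; then $\coz a$ is open, contains $m_2$, and excludes $m_1$, while $\coz b$ separates the points in the other direction, giving $T_1$.

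I do not expect any of these steps to present real difficulty—the only substantive input is Proposition \ref{idealconjmax}, already proved. The one point genuinely worth flagging is precisely \emph{where} the parallel with Theorem \ref{repthm} breaks down, and this is the reason the corollary drops the compactness conclusion. In the conjunctive case, compactness rested on the element $1_L$: one had $\coz 1_L = \maxx L$ and the equivalence $1\in\langle B\rangle\iff$ the cozero cover reduces to a finite subcover, after which the Alexander subbase theorem applied. When $L$ has no top element there is no such distinguished element, no analogous reduction, and hence no route to compactness—so this step must be excised rather than adapted.
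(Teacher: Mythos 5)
Your proposal is correct and is exactly what the paper intends: the paper gives no separate proof for this corollary, stating only that ``the proof parallels that of Theorem \ref{repthm},'' and your write-up carries out that parallel faithfully (Proposition \ref{idealconjmax} in place of Proposition \ref{conjmax}, maximal proper ideals in place of maximal ideals, and the compactness step correctly identified as the one piece that cannot be transported for lack of a top element). No gaps.
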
 

\begin{rem} If $L$ is distributive, we can assert that $\coz L$ is a base.  This follows from Proposition \ref{toprep}, below, since when $L$ is distributive, $\SpecM L$ is a subspace of $\Spec L$.  \end{rem}

\subsection{Application: Yosida Frames}\label{mzsect}

We close this section by making a connection to work of Martinez and Zenk.    A complete lattice that is isomorphic to $\ideals L$ for some distributive join-semilattice $L$ is called an \textit{algebraic frame}.  (We discuss join-semilattices that satisfy the distributivity condition in detail in Section \ref{djsl}, below.)   In \cite{MZ}, Martinez and Zenk define a \textit{Yosida frame} to be an algebraic frame in which every compact element is the infimum of the maximal elements above it. They define a frame $F$ to be \textit{finitely subfit} if: for all compact elements $b\not\leq a$ in $F$, there is some  $w\in F$ such that $a\leq w < 1$ and $w\vee b =1$.  One of the main results of \cite{MZ} is their Proposition 4.2:  \textit{Suppose $A$ is an algebraic frame with $FIP$ (i.e., the meet of any two compact elements is compact).  Then $A$ is Yosida if and only if it is finitely subfit}.   To translate this into our terminology, $A$ is a finitely subfit algebraic frame if and only if $A$ is isomorphic to the lattice of ideals of some ideally conjunctive distributive join-semilattice.   With this observation, we can see that a much stronger statement than  \cite{MZ} Proposition 4.2 is an immediate corollary of our Proposition \ref{idealconjmax}.     

\begin{cor}\label{mzprop} Let $L$ be a join-semilattice. Every principal ideal of $L$ (i.e., every compact element of $\ideals L$) is the meet of the maximal ideals that contain it if and only if $L$ is ideally conjunctive.\qed
\end{cor}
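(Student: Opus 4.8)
The plan is to recognize this corollary as nothing more than the equivalence ``(1)$\Leftrightarrow$(6)'' from the list of reformulations of ideal conjunctivity, and to obtain it by unwinding the meet condition into the hypothesis of Proposition \ref{idealconjmax}. Throughout, ``maximal ideal'' should be read as ``maximal proper ideal,'' as in Proposition \ref{idealconjmax}, and the meet in $\ideals L$ is set-theoretic intersection.

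First I would fix $a\in L$ and identify the maximal ideals that contain the principal ideal $\down a$: since a maximal proper ideal $m$ is a downset, $\down a\subseteq m$ if and only if $a\in m$. Thus the meet in question is $M(a):=\bigcap\{\,m\mid m\text{ maximal proper},\ a\in m\,\}$, with the convention that an empty intersection equals $1_{\ideals L}=L$. Because each such $m$ contains $\down a$, the inclusion $\down a\subseteq M(a)$ holds automatically for every $a$; hence the assertion ``$\down a$ is the meet of the maximal ideals containing it'' reduces, for each fixed $a$, to the reverse inclusion $M(a)\subseteq \down a$.

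Next I would rewrite this reverse inclusion in elementwise form. $M(a)\subseteq \down a$ says that whenever $b\in m$ for every maximal proper ideal $m$ with $a\in m$, then $b\leq a$. Taking the contrapositive, this is exactly: for every $b\in L$ with $b\not\leq a$ there is a maximal proper ideal $m$ with $a\in m$ and $b\notin m$. Quantifying over all $a\in L$, the statement that every principal ideal equals the meet of the maximal ideals containing it is therefore word-for-word the condition ``for all $a,b\in L$ with $b\not\leq a$ there is a maximal proper ideal of $L$ that contains $a$ and does not contain $b$.'' By Proposition \ref{idealconjmax}, this condition is equivalent to $L$ being ideally conjunctive, which finishes the proof.

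There is no real obstacle here beyond bookkeeping: the two points to watch are the automatic inclusion $\down a\subseteq M(a)$ (so that only one inclusion carries content) and the empty-intersection convention, which correctly makes the meet condition fail for any $\down a\neq L$ lying in no maximal proper ideal --- exactly the behaviour one wants, since such an $L$ (for instance $\N$) is not ideally conjunctive.
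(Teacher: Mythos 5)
Your proposal is correct and matches the paper's intent exactly: the paper states this corollary with no written proof, presenting it as an immediate consequence of Proposition \ref{idealconjmax}, and your argument is precisely the elementwise unwinding (``$\down a\subseteq m$ iff $a\in m$,'' the automatic inclusion $\down a\subseteq M(a)$, and the contrapositive of the reverse inclusion) that makes that immediacy explicit. Your attention to the empty-intersection convention and the reading of ``maximal'' as ``maximal proper'' are both consistent with the paper's conventions.
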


Observe that the ideals of a lattice are defined entirely in terms of the join operation.  In other words, if $\mathbf{F}$ is the forgetful functor from lattices to join-semilattices, and $L$ is a lattice, then $\ideals L = \ideals  \mathbf{F}L$. Therefore, the corollary applies to all algebraic frames, and of course much more. The Martinez-Zenk result is an immediate consequence.  The corollary shows that the FIP hypothesis is unnecessary.  In fact, there is no need to refer to meets, except as a way to restate the condition in Proposition \ref{idealconjmax}.  Nor are any hypotheses about distributivity needed.  

\subsection{Questions}
\begin{enumerate}
%\item The representation is not functorial.  Suppose $L$ is conjunctive and $\phi:L\to \{0,1\}$ is a function that preserves $\vee$ and $1$.  Then $\phi^{-1}(0)$ need not be maximal.  For example, let $L$ be the cofinite topology on an infinite set.  If we define $\phi(a) = 1$ unless $a=\emptyset$, then  $\phi^{-1}(0) =\{\emptyset\}$, which is not maximal.  Are there reasonable hypotheses on $L$ under which an ideal of $L$ is maximal if and only if it is $\phi^{-1}(0)$ for some $\phi:L\to \{0,1\}$?

\item Let $wL$ be the sublattice of $\mathcal{W}L$ generated by $\{\,\coz a \mid a\in L\,\}$.  As a sublattice of the power set of $\maxx L$, $wL$ is distributive.  Can we characterize the relationship between $L$ and $wL$ algebraically?  What is the relationship between $\SpecM L$ and $\Spec wL$?  (We answer these questions in subsection \ref{join-generation}.)

\item Since the criterion for $\{\,\coz b\mid b\in B\,\}$ to cover $\coz a$ is not finite,  $\coz a$ in general is not compact.  For example, let $L$ be the join semilattice of all open subsets $[0,1]$.  By the Representation Theorem, part $(ii)$,  $\SpecM L$ is homeomorphic with $[0,1]$ and the cozero sets are open subsets of $[0,1]$.    Under what conditions is $\coz a$ compact?  Characterize those $L$ for which $\coz a$ is compact for all $a\in L$.

\item The conjunction condition for join-semilattices is first-order in the language of join-semilattices, but though it is equivalent to the higher-order condition that every principal ideal is an intersection of maximal ideals.  Is there a first-order way of stating the conjunction condition for morphisms (Definition \ref{conjmorph})? (This must avoid direct reference to maximal ideals.)
\end{enumerate}

%%%%%%%%%%%%%%%%%
%%%%%%%%%%%%%%%%%
\section{Distributive join semilattices}\label{djsl}
%%%%%%%%%%%%%%%%%
%%%%%%%%%%%%%%%%%

A join-semilattice $L$ is said to be \textit{distributive} if $$\text{$\forall\, a, b_0, b_1\in L$: $a\leq b_0\vee b_1\myimplies\exists\, a_0, a_1\in S$ s.t.  $a_0\leq b_0\;\&\; a_1\leq b_1\;\&\; a=a_0\vee a_1$.}$$   The distributive join semilattices are important for two reasons.  First, these are the most general semilattices for which there is a good topological representation theory; see \cite{G}.  Second, the compact elements of an algebraic frame form a distributive join semilattice with $0$ and every  distributive join semilattice arises this way.   Distributivity proves to be a powerful but subtle property.

\subsection{Algebraic Frames and Prime Spectra}
In the present subsection, we review results from \cite{G} concerning the relationship of a distributive join-semilattice $L$ to its frame $F=\ideals L$ of join-semilattice ideals.  In a few cases, we supply details that are relevant to present work and are not fully elaborated in \cite{G}.  We provide specific references to the relevant content of \cite{G}.

Recall that a \textit{frame} $F$ is a complete lattice with $0$ and $1$ in which the binary meet operation $\wedge$ distributes over any join:
$$a\wedge \bigvee B = \bigvee \{\,a\wedge b\mid b\in B\,\}.$$  A \textit{frame morphism} is a map between frames that preserves $\wedge$ and  $\bigvee$.  An element $c\in F$ is \textit{compact} if for all subsets $B$ of $F$, $c\leq \bigvee B$ implies $c\leq \bigvee B'$ for some finite $B'\subseteq B$.  The set of compact elements of $F$ is a $\vee$-semilattice and is denoted $\cpt F$.  A frame is said to be \textit{algebraic} if every element is a join of compact elements.
   
\begin{lem}\label{djl1}  Let $L$ be a join semilattice, and let $\ideals L$ denote the complete lattice of ideals of $L$.  The following are equivalent:
\begin{enumerate}
\item[$(i)$]  $L$ is distributive.
\item[$(ii)$] $\ideals L$ is a distributive lattice.
\item[$(iii)$] $\ideals L$ is a frame, i.e., it satisfies the infinite distributive law:
 $$\text{
 for all $I\in \ideals L$ and $\mathcal J\subseteq \ideals L$, $I \wedge\bigvee \mathcal J =  \bigvee\{\,I\wedge J\mid J\in \mathcal J\,\}$}.$$ 
\end{enumerate}
\end{lem}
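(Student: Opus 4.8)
The plan is to prove the cycle $(i)\Rightarrow(iii)\Rightarrow(ii)\Rightarrow(i)$, which yields the three-way equivalence. The implication $(iii)\Rightarrow(ii)$ needs no work: the finite distributive law defining a distributive lattice is exactly the frame law of $(iii)$ specialized to a two-element family $\mathcal J=\{J,K\}$.

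Next I would handle $(ii)\Rightarrow(i)$ by reducing distributivity of $L$ to the distributive law for principal ideals. Given $a,b_0,b_1\in L$ with $a\leq b_0\vee b_1$, I pass to $\ideals L$, where $\down a\leq \down b_0\vee\down b_1=\down(b_0\vee b_1)$. Applying $(ii)$ and using that meets in $\ideals L$ are intersections gives
$$\down a=\down a\wedge(\down b_0\vee\down b_1)=(\down a\cap \down b_0)\vee(\down a\cap\down b_1).$$
Since $a$ itself lies in the right-hand ideal, unwinding the description of $\langle\,\cdot\,\rangle$ for the join of two ideals produces $u\in\down a\cap\down b_0$ and $v\in\down a\cap\down b_1$ with $a\leq u\vee v$; because $u,v\leq a$ force $u\vee v\leq a$, we get $a=u\vee v$ with $u\leq b_0$ and $v\leq b_1$, which is precisely the required refinement. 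The one point to watch is that this extraction presumes the intersections $\down a\cap\down b_i$ are nonempty, which is automatic once $L$ has a least element; this step deserves an explicit remark in the borderline case where $L$ lacks $0$.

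The substance of the lemma is $(i)\Rightarrow(iii)$. The inclusion $\bigvee\{I\wedge J\mid J\in\mathcal J\}\leq I\wedge\bigvee\mathcal J$ holds in any complete lattice, so only the reverse inclusion uses distributivity of $L$. The key is that membership in $\bigvee\mathcal J$ is always witnessed by a \emph{finite} join: if $x\in I\wedge\bigvee\mathcal J=I\cap\langle\bigcup\mathcal J\rangle$, then $x\leq y_1\vee\cdots\vee y_n$ with each $y_k$ in some $J_k\in\mathcal J$. I would then invoke the $n$-ary form of distributivity of $L$, obtained from the binary definition by an easy induction, to split $x=a_1\vee\cdots\vee a_n$ with $a_k\leq y_k$. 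Each $a_k\leq y_k$ gives $a_k\in J_k$, and each $a_k\leq x\in I$ gives $a_k\in I$, so $a_k\in I\wedge J_k$ and hence $x=\bigvee_k a_k\in\bigvee\{I\wedge J\mid J\in\mathcal J\}$.

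The main obstacle, and really the only conceptual content, is $(i)\Rightarrow(iii)$, where the crux is recognizing that the \emph{infinite} distributive law for $\ideals L$ reduces to the \emph{finite} distributivity of $L$. This reduction is exactly the algebraicity of $\ideals L$ established earlier: principal ideals are compact, so any element of an arbitrary join $\bigvee\mathcal J$ already sits beneath a finite subjoin drawn from the members of $\mathcal J$. Once this is in hand, the rest, establishing the $n$-ary refinement by induction and checking the two downset closures $a_k\in I$ and $a_k\in J_k$, is routine. It is also this reduction that forces conditions $(ii)$ and $(iii)$ to coincide, since there is no gap between the finite and infinite laws in $\ideals L$.
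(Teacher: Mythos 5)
Your $(i)\Rightarrow(iii)$ argument is essentially the paper's own: the paper likewise observes that every element of $\bigvee\mathcal J$ lies beneath a finite join drawn from $\bigcup\mathcal J$, applies the ($n$-ary) distributivity of $L$ to split such an element, and checks that each piece lands in $I\wedge J$ for some $J\in\mathcal J$; it also dismisses $(iii)\Rightarrow(ii)$ as obvious, as you do. Where you genuinely diverge is on $(i)\Leftrightarrow(ii)$: the paper outsources this to Lemma 184 of Gr\"atzer, whereas you prove $(ii)\Rightarrow(i)$ directly by applying the finite distributive law to $\down a\wedge(\down b_0\vee\down b_1)$ and unwinding the description of the join of two ideals. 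That is a legitimate and more self-contained route, and your cycle $(i)\Rightarrow(iii)\Rightarrow(ii)\Rightarrow(i)$ is structurally cleaner than proving the two equivalences separately.

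However, the point you flag as merely ``deserving a remark'' is a genuine gap, and under the paper's conventions it cannot be closed. The paper counts $\emptyset$ as an ideal, so $\ideals L$ is always a complete lattice; now take $L=\{x,y,x\vee y\}$ with $x$ and $y$ incomparable. Then $\ideals L=\{\emptyset,\down x,\down y,L\}\cong\mathbbm{2}\times\mathbbm{2}$ is a distributive lattice and a frame, yet $L$ is not distributive: with $a=b_0=x$ and $b_1=y$, the only candidates $a_0=x$, $a_1=y$ give $a_0\vee a_1=x\vee y\neq x$. So $(ii)\Rightarrow(i)$ is false as stated, and your extraction fails exactly where you said it might, namely when $\down a\cap\down b_1=\emptyset$. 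The repair is either to assume $L$ has a least element (or is downward directed), or to adopt Gr\"atzer's convention that ideals are nonempty, under which $\ideals L$ in the example above is not a lattice at all and $(ii)$ fails vacuously. Note that distributivity of $L$ forces downward directedness (split $a\le a\vee b$ as $a=a_0\vee a_1$ with $a_1\le a$ and $a_1\le b$), so the forward implications are unaffected; only the converse needs the extra hypothesis. You should state that hypothesis explicitly rather than leaving it as a remark --- and it is worth recording that the lemma as printed, read literally with $\emptyset\in\ideals L$, has the same defect.
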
 

\begin{proof} $(i\Leftrightarrow ii)$ is Lemma 184 of \cite{G}.  $(iii\Rightarrow ii)$ is obvious.  $(i\Rightarrow iii)$.  Let $\mathcal J$ be a set of ideals.  The distributivity of $L$ implies that if $a\leq b_1\vee\cdots\vee b_n$, then 
 $a=a_1\vee\cdots\vee a_n$ for some $a_i\leq b_i$.  Accordingly,  each element of $\bigvee \mathcal J$ is of the form $\bigvee A$, where $A$ is a finite subset of $\bigcup \mathcal J$.  Using this, we show the infinite distributive law.  $I\cap\bigvee \mathcal J$ consists of the  $\bigvee A$ that are in $I$, but if $\bigvee A\in I$, then each $a\in A$ belongs to $I\cap J$ for some $J\in \mathcal J$.  Thus, $I\cap\bigvee \mathcal{J}\subseteq \bigvee\{\,I\wedge J\mid J\in \mathcal J\,\}$.  The other containment is obvious. \end{proof}
 
 Suppose $L$ is a distributive join semilattice and $\ideals L$ is its frame of ideals.  Clearly, every $J\in \ideals L$  is a supremum of principal ideals.  Moreover, $J\in \ideals L$ is compact if and only if it is principal.  Thus, $\ideals L$ is algebraic.
 
\begin{prop}  If a frame $F$ is algebraic, then $\cpt F$ is a distributive join semilattice, and $F\cong \ideals\cpt F$.\end{prop}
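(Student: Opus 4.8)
The plan is to dispatch the isomorphism $F\cong\ideals\cpt F$ from general facts already in hand and to reserve the real effort for the distributivity of $\cpt F$. For the isomorphism, I would first note that an algebraic frame is in particular an algebraic join-semilattice: a frame is a complete lattice, hence a complete join-semilattice, and the notion of compactness is the same in both settings, so ``every element is a join of compact elements'' is exactly the generation condition. Thus the facts about algebraic join-semilattices recalled after Lemma~\ref{univdown} apply with $A=F$, giving that $C\colon F\to\ideals\cpt F$ and $\bigvee\colon\ideals\cpt F\to F$ are mutually inverse order-preserving bijections. An order-isomorphism between complete lattices automatically preserves all meets and joins, so this already yields $F\cong\ideals\cpt F$; once $\cpt F$ is shown distributive, Lemma~\ref{djl1} makes $\ideals\cpt F$ a frame and the bijection a frame isomorphism.

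The substance of the proposition is that $\cpt F$ is a distributive join-semilattice. I would take compact $a,b_0,b_1$ with $a\leq b_0\vee b_1$ and seek compact $a_0\leq b_0$ and $a_1\leq b_1$ with $a=a_0\vee a_1$. The starting point is the finite distributive law, valid in any frame: $a=a\wedge(b_0\vee b_1)=(a\wedge b_0)\vee(a\wedge b_1)$. The naive choice $a_i:=a\wedge b_i$ fails because these meets need not be compact, and this is the one real obstacle. To overcome it, I would use algebraicity of $F$ to write each meet as the join of the compact elements beneath it, so that $a\leq\bigvee\{c\in\cpt F\mid c\leq a\wedge b_0\}\vee\bigvee\{d\in\cpt F\mid d\leq a\wedge b_1\}$, a single join of compact elements. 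Because $a$ is compact, it lies below a finite subjoin; gathering the finitely many chosen compacts below $a\wedge b_0$ into $a_0$ and those below $a\wedge b_1$ into $a_1$ produces elements that are compact, since a finite join of compact elements is again compact ($\cpt F$ being a sub-join-semilattice), and that satisfy $a_0\leq b_0$, $a_1\leq b_1$, and $a\leq a_0\vee a_1$. The reverse inequality is immediate from $a_0,a_1\leq a$, so $a=a_0\vee a_1$, the required refinement.

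The only delicate point is securing compactness of $a_0$ and $a_1$; this is precisely where compactness of $a$ is invoked, to pass from the infinite join supplied by algebraicity to a finite one, after which closure of $\cpt F$ under finite joins finishes the argument. Everything else is routine bookkeeping with the frame law and the order-isomorphism from the first step.
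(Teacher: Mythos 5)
Your proposal is correct and follows essentially the same route as the paper: the isomorphism is obtained from the mutually inverse maps $I\mapsto\bigvee I$ and $f\mapsto\down f\cap\cpt F$, and distributivity of $\cpt F$ is proved by writing $a=(a\wedge b_0)\vee(a\wedge b_1)$, expanding each meet as a join of compact elements by algebraicity, and using compactness of $a$ to extract a finite subjoin whose pieces give $a_0$ and $a_1$. The paper's proof is the same argument in slightly more compressed form.
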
 

\begin{rem}The first assertion is a point-free version of (part of) Theorem 191 of \cite{G}.  For the convenience of the reader, we include the proof.   Note that the $\vee$ operation of $F$ always restricts to a $\vee$ operation on $\cpt F$.  The $\wedge$ operation of $F$ does not---$\cpt F$ is not in general closed under meets.  \end{rem}

\begin{proof}  Given $a, b_0, b_1\in \cpt F$ with $a\leq b_0\vee b_1$, we must find $a_0, a_1\in \cpt F$ with $a_0\leq b_0$ and $a_1\leq b_1$, and $a=a_0\vee a_1$.  Since $F$ is join-generated by $\cpt F$, we have: $a\wedge b_i = \bigvee A_i$ for some $A_i\subseteq  \cpt F$, $i=0,1$.  Using the operations in $F$, $a = (a\wedge b_0)\vee (a\wedge b_1) = \bigvee A_0\vee \bigvee A_1$.  Since $a$ is compact, there are finite sets $A'_0\subseteq  A_0$ and $A'_1\subseteq  A_1$ such that
$a  =  \bigvee A'_0\vee \bigvee A'_1$.  The elements $a_i:= \bigvee A'_i$, $i=0,1$, are in $\cpt F$ and satisfy the required conditions.  The second assertion follows from the observation that the following maps are inverses of one another:
\begin{align*}
\ideals \cpt F\ni I&\mapsto \bigvee I\in F, \text{and}\\
F\ni f&\mapsto(\down f\cap \cpt F)\in \ideals \cpt F.\qedhere
\end{align*}
\end{proof}

The following lemma and proposition give the main features of the topological representation theorem for distributive join-semilattices.

\begin{lem}\label{djl2} Suppose $L$ is a distributive join semilattice.  Let $F\subseteq L$ be a filter, and suppose $I$ is an ideal maximal disjoint from $F$.  Then $I$ is prime (i.e., its complement is a filter).\end{lem}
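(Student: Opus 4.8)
The plan is to show directly that the complement $L\setminus I$ is a filter. Since $I\cap F=\emptyset$ with $F$ nonempty, we have $F\subseteq L\setminus I$, so $L\setminus I$ is a nonempty up-set (it is an up-set because $I$ is a downset). Hence the only real content is that $L\setminus I$ contains a common lower bound of any two of its elements. So I would fix $a,b\in L\setminus I$ and aim to produce $c\le a$, $c\le b$ with $c\notin I$. The first move is to exploit maximality: because $a\notin I$, the ideal $\langle I\cup\{a\}\rangle$ properly contains $I$, so by maximality of $I$ among ideals disjoint from $F$ it cannot itself be disjoint from $F$; thus it meets $F$, and likewise for $\langle I\cup\{b\}\rangle$. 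Using the description of generated ideals, this yields $f_1,f_2\in F$ and $i_1,i_2\in I$ with $f_1\le i_1\vee a$ and $f_2\le i_2\vee b$.

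Next I would consolidate these into a single witness. Since $F$ is a filter, choose $f\in F$ with $f\le f_1$ and $f\le f_2$, and set $i:=i_1\vee i_2\in I$. Monotonicity then gives $f\le i\vee a$ and $f\le i\vee b$ simultaneously. Intuitively, $f$ is an element of $F$ lying below both $a$ and $b$ \emph{modulo $I$}, and the remaining task is to peel off the $I$-part of $f$ to expose a genuine common lower bound of $a$ and $b$.

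The key step is a two-fold application of the distributive axiom. From $f\le i\vee a$, distributivity produces $p,q$ with $p\le i$, $q\le a$, and $f=p\vee q$; note $p\in I$ because $I$ is a downset. Now $q\le f\le i\vee b$, so a second application of distributivity gives $q_1,q_2$ with $q_1\le i$, $q_2\le b$, and $q=q_1\vee q_2$; again $q_1\in I$. The element $q_2$ then satisfies $q_2\le b$ and also $q_2\le q\le a$, so it is a common lower bound of $a$ and $b$. Finally $q_2\notin I$: otherwise $q=q_1\vee q_2\in I$, whence $f=p\vee q\in I$, contradicting $f\in F$ together with $I\cap F=\emptyset$. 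Thus $q_2\in L\setminus I$ is the required common lower bound, and $L\setminus I$ is a filter, so $I$ is prime.

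I expect the main obstacle to be conceptual rather than computational. In a join-semilattice there are no meets, so primality cannot be phrased as ``$a\wedge b\in I\Rightarrow a\in I$ or $b\in I$''; one must instead manufacture an explicit common lower bound of $a$ and $b$ lying outside $I$. It is precisely the nested use of the distributive axiom---first to split off the $a$-part $q$ of the filter element $f$, then to split off the $b$-part $q_2$ of $q$---that produces such an element, and without distributivity there is no reason one should exist. A minor bookkeeping point to check is that the witnesses $i_1,i_2$ can be chosen in $I$ even when the generating finite subsets avoid $I$; this is harmless, since one simply pads with a fixed element of $I$ whenever $I\ne\emptyset$, the case $I=\emptyset$ being degenerate (it forces $F=L$).
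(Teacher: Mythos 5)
Your proof is correct and follows essentially the same route as the paper's: use maximality of $I$ to get a filter element below $a$ and below $b$ modulo elements of $I$, consolidate via the filter property, then apply the distributive axiom twice in succession to extract a common lower bound of $a$ and $b$ that cannot lie in $I$. The only cosmetic difference is that you merge the two $I$-witnesses into a single $i=i_1\vee i_2$ up front, whereas the paper keeps them separate and notes afterward that one may take them equal.
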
  

\begin{proof}Suppose $a, b\not\in I$.  
We must show that $a$ and $b$ have a lower bound that is not in $I$.  
Since $I$ is maximal disjoint from $F$,  $a\vee p\in F$ and $b\vee q\in F$ for some $p,q\in I$.   
Since $F$ is a filter, there is $c\in F$ such that $c\leq a\vee p$ and $c\leq b\vee q$.  
By distributivity, $c=a'\vee p'$, with $a'\leq a$ and $p'\leq p$.  
But $a'\leq c\leq b\vee q$, so by distributivity again $a' = b' \vee q'$, with $b'\leq b$ and $q'\leq q$. Now $b'$ is a lower bound for both $a$ and $b$.   
Moreover, $b'\notin I$, since $b'\vee q'\vee p'=c\not\in I$, while $q'\vee p'\in I$.\end{proof}

Note that in the proof above, we may take $p=q$, since $p\vee q\in I$.  Let us examine how the proof runs in the special case when $F = \{1\}$.  Suppose $I$ is maximal and $a,b\not\in I$.  Then  $a\vee p=1=b\vee p$ for some $p\in I$.  We apply distributivity once: since $a\leq b\vee p$, we have $a = b'\vee p'$ with $b'\leq b$ and $p'\leq p$.  So, $b'\leq a$ and $b'\leq b$.  Moreover, $1=a\vee p = b'\vee p'\vee p = b'\vee p\not\in I$.  Since $p\in I$, $b'\not\in I$.

\begin{prop}\label{toprep}  Suppose $L$ is a distributive join semilattice.  Let $\Spec L$ denote the set of prime ideals of $L$ with the topology generated by sets $$\spec a:= \{\,p\in \Spec L\mid a\not\in p\,\},$$ where $a\in L$.  Then the map $J\mapsto \{\,p\in \Spec L\mid J\not\subseteq  p\,\}$ is a frame-isomorphism of $\ideals L$ with the topology of $\Spec L$.\end{prop}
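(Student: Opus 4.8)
The plan is to show that the map
$\Phi\colon \ideals L\to \mathcal P(\Spec L)$, $\Phi(J):=\{\,p\in\Spec L\mid J\not\subseteq p\,\}$,
is a bijection onto the topology of $\Spec L$ that preserves arbitrary joins and finite meets; since $\ideals L$ is a frame by Lemma~\ref{djl1}, such a map is automatically a frame isomorphism. The starting observation is that $\spec a=\Phi(\down a)$, because $a\notin p$ exactly when $\down a\not\subseteq p$ (as $p$ is a downset), so the subbasic opens already lie in the image of $\Phi$.

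First I would record that $\Phi$ preserves arbitrary joins. Since each $p$ is an ideal, $\tbigvee\mathcal J\subseteq p$ holds iff $J\subseteq p$ for every $J\in\mathcal J$; passing to complements gives $\Phi(\tbigvee\mathcal J)=\tbigcup\{\,\Phi(J)\mid J\in\mathcal J\,\}$. In particular, writing an ideal $J$ as the join of the principal ideals it contains yields $\Phi(J)=\bigcup_{a\in J}\spec a$, so every $\Phi(J)$ is open. Conversely, a general open set is a union of finite intersections of subbasic opens, and---once meet-preservation is established---each finite intersection equals $\Phi(\down a_1\cap\cdots\cap\down a_n)$; applying join-preservation to the union then shows the image of $\Phi$ is exactly the topology of $\Spec L$.

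Next I would verify that $\Phi$ preserves finite meets, i.e.\ $\Phi(I\cap J)=\Phi(I)\cap\Phi(J)$ (preservation of top and bottom being clear). One inclusion follows from $I\cap J\subseteq I,J$. For the reverse, suppose $I\not\subseteq p$ and $J\not\subseteq p$, and pick $a\in I\setminus p$, $b\in J\setminus p$. Because $p$ is prime, its complement is a filter, so $a$ and $b$ have a common lower bound $c\notin p$; since $I$ and $J$ are downsets, $c\in I\cap J$, whence $I\cap J\not\subseteq p$. Together with join-preservation this makes $\Phi$ a frame morphism.

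The crux is injectivity, and this is exactly where distributivity enters, through Lemma~\ref{djl2}. Suppose $I\not\subseteq J$ and choose $a\in I\setminus J$. Since $J$ is a downset, $a\notin J$ forces $J$ to be disjoint from the principal filter $\up a$. An $a$-maximal ideal is precisely an ideal maximal among those disjoint from $\up a$, so Lemma~\ref{lemlem} supplies an $a$-maximal ideal $p\supseteq J$, which is maximal disjoint from $\up a$ and hence prime by Lemma~\ref{djl2}. Then $J\subseteq p$ while $a\in I\setminus p$, so $p\in\Phi(I)\setminus\Phi(J)$, giving $\Phi(I)\neq\Phi(J)$. This proves $\Phi$ injective and completes the identification of $\Phi$ as a frame isomorphism. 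I expect this last step to be the main obstacle: it is the supply of \emph{enough} prime ideals that is delicate, and without distributivity an $a$-maximal ideal need not be prime (cf.\ Example~\ref{maxnotprime}), so Lemma~\ref{djl2} is indispensable here.
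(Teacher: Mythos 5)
The paper offers no argument of its own here---it simply cites Gr\"atzer, Lemma 186---so your write-up is a genuine reconstruction rather than a variant of the paper's proof. What you give is the standard argument, and it is essentially correct: $\spec a=\Phi(\down a)$, arbitrary joins are preserved for free, finite meets are preserved because the complement of a prime ideal is a filter, surjectivity onto the topology then follows, and injectivity is indeed the crux, resting on the prime-separation fact (Lemma \ref{djl2}) applied to an ideal maximal disjoint from $\up a$. Your closing remark correctly identifies why distributivity is indispensable and where the argument would collapse without it.

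One wrinkle deserves attention, though it traces back to the paper's conventions as much as to your argument. The paper's $\ideals L$ contains the empty ideal, while its prime ideals are required to be non-empty. If $L$ has a least element $0$, then every prime ideal contains $\down 0$, so $\Phi(\emptyset)=\Phi(\down 0)=\emptyset$ and injectivity literally fails; correspondingly, your separation step breaks exactly there: for $a=0$ the only ideal disjoint from $\up 0=L$ is $\emptyset$, and Lemma \ref{lemlem} is stated only for proper (in particular non-empty) ideals, while the $a$-maximal ideal produced is $\emptyset$, which is not prime under the paper's definition (the proof of Lemma \ref{djl2} tacitly assumes $I\neq\emptyset$ when it picks $p,q\in I$). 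So you should either work with non-empty ideals throughout (which is evidently Gr\"atzer's convention and makes the statement true), or isolate the case $a=0_L$ explicitly. Away from this degenerate case---i.e.\ whenever $J\neq\emptyset$, or $a$ is not a bottom element---your argument is complete and correct.
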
 

\begin{proof} This is a restatement of \cite{G}, Lemma 186.\end{proof}

\bigskip

\subsection{Distributivity and maximal vs.~prime ideals in conjunctive join-semilattices}\label{maxanddist}

We have shown that in a distributive join semilattice, all maximal ideals are prime.  Suppose $L$ is a conjunctive join semilattice that is not distributive. Does $L$ have a maximal ideal that is not prime?  In this subsection, we answer in the affirmative for complete (and in particular, for finite) join semilattices.  

Let $L$ be a finite conjunctive join semilattice.  We can represent $L$ as a $T_1$ subbase on a finite set $X$.  $\SpecM L$ is identified with $X$ in the discrete topology.  Each point $x\in X$ is identified with the maximal ideal $m_x:=\{\,a\in L\mid x\not\in a\,\}$.   

Suppose $L$ fails to be distributive.  Then there are $a,b,c\in L$ such that: $a\subseteq b\cup c$ and 
$$\text{$\forall\;b', c'\in L,\;\;b'\subseteq  a\cap b\;\&\;c'\subseteq  a\cap c\;\;\Rightarrow\;\; b'\cup c'\neq a$.}$$ (Note that $a\cap b$ and $a\cap c$ need not be in $L$.)  Since $X$ is finite, there is a largest $b' \in L$ such that $b'\subseteq a\cap b$ and a largest $c'\in L$ such that $c'\subseteq a\cap c$.   

\begin{minipage}{.6\textwidth} 
Now, $a\neq b'\cup c'$, so $b'\cup c'$ does not contain \textit{either}:
	\begin{enumerate}
		\item[($i$)] some $x\in a\setminus c$,   \textit{or} 
		\item[($ii$)]  some $y\in a\cap b\cap c$,  \textit{or}
		\item[($iii$)]  some $z\in a\setminus b$. 
	\end{enumerate}
\end{minipage} \hskip 4em
\begin{minipage}{.4\textwidth}
	\includegraphics[width=0.35\textwidth]{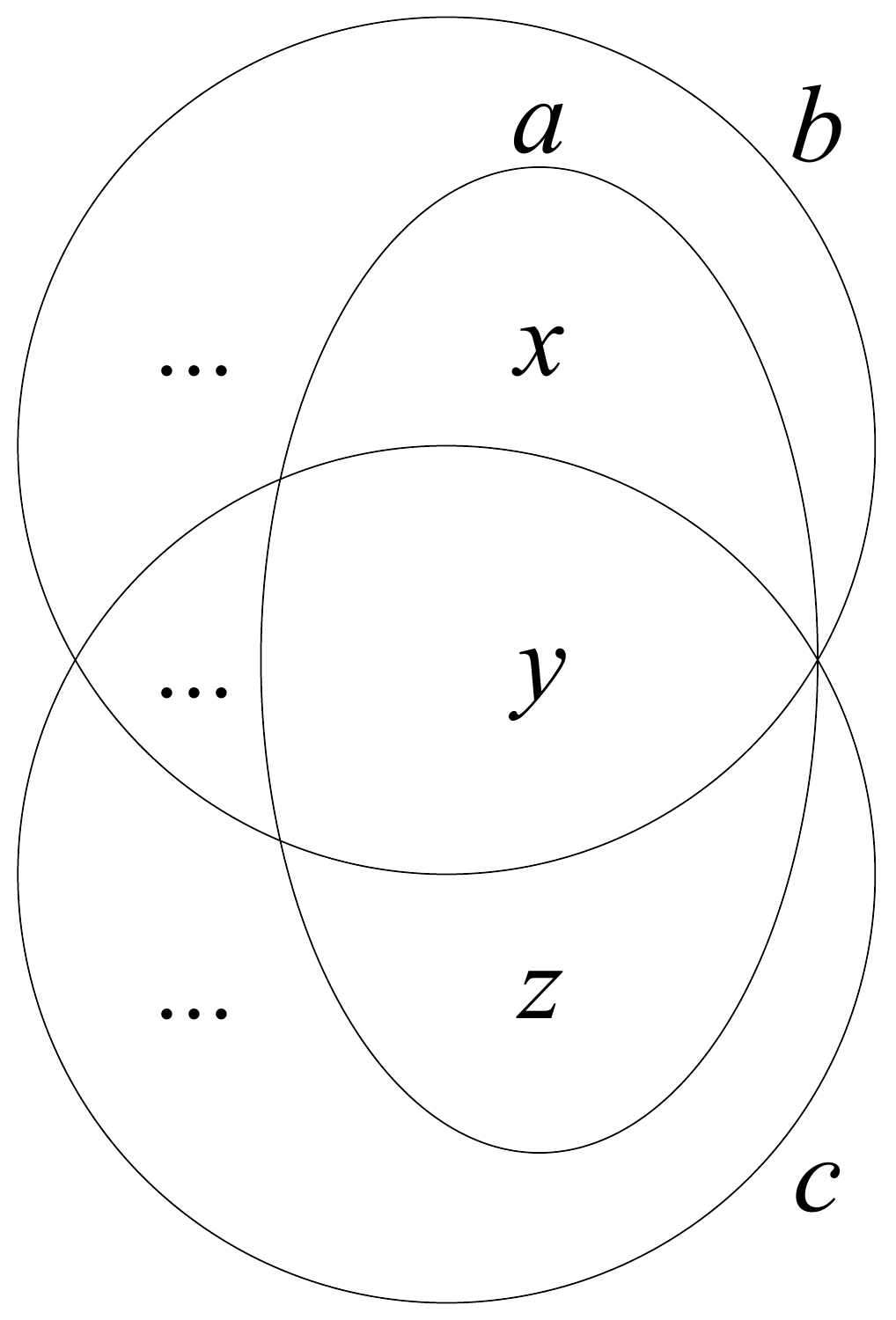}
\end{minipage}

\smallskip
\noindent In case ($i$), $a$ and $b$ contain $x$, i.e., are not in $m_x$, but any lower bound in $L$ for $a$ and $b$ does not contain $x$, and therefore is in $m_x$.  Case ($iii$) is similar: $a$ and $c$ are not in $m_z$, but any lower bound in $L$ for $a$ and $c$ is in $m_z$.
In case ($ii$), $a$, $b$ and $c$ all fail to belong to $m_y$, but the only lower bounds for $a$ and $b$  in $L$ are in $m_y$, and similarly for $a$ and $c$.  Thus, we have shown that \textit{any finite conjunctive join semilattice that fails to be distributive has a maximal ideal that is not prime.}

The argument used above in the finite case depends only on the fact that every subset of $L$ has a \textit{least} upper bound.  Thus, we may replace ``finite'' with ``complete'' in the assertion proved above.   

\begin{prop} Any complete conjunctive join semilattice that fails to be distributive has a maximal ideal that is not  prime.\qed\end{prop}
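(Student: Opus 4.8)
The plan is to reduce the complete case to the finite argument already worked out in the text, by showing that the failure of distributivity can be witnessed entirely in terms of least upper bounds, which exist in any complete semilattice. First I would invoke the representation theorem (Theorem \ref{repthm}, part $(i)$): since $L$ is conjunctive, I may identify $L$ with a join-closed $T_1$ subbase $\coz L$ on the set $X:=\maxx L$, where each $a\in L$ becomes the cozero set $\coz a\subseteq X$, and each point $x\in X$ corresponds to the maximal ideal $m_x=\{\,a\in L\mid x\notin a\,\}$. Under this identification, $a\in m_x$ means $x\notin \coz a$, and $a$ being a lower bound of $b$ and $c$ means $\coz a\subseteq \coz b\cap \coz c$.

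Next I would transcribe the non-distributivity hypothesis into this picture. By Lemma \ref{djl1}, $L$ fails to be distributive, so there exist $a,b,c\in L$ with $\coz a\subseteq \coz b\cup \coz c$ but no $b',c'\in L$ satisfying $\coz{b'}\subseteq \coz a\cap \coz b$, $\coz{c'}\subseteq \coz a\cap \coz c$, and $\coz{b'}\cup \coz{c'}=\coz a$. Here is where completeness does the work formerly done by finiteness: in a complete semilattice the set $\{\,b'\in L\mid \coz{b'}\subseteq \coz a\cap \coz b\,\}$ has a least upper bound $b^{*}$ in $L$, and since cozero sets are determined pointwise and the join of a family corresponds to the union of the corresponding cozero sets, $\coz{b^{*}}$ is the \emph{largest} cozero set contained in $\coz a\cap \coz b$. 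Define $c^{*}$ dually. Thus $b^{*}$ and $c^{*}$ play exactly the role of the ``largest $b'$'' and ``largest $c'$'' in the finite argument, and by the failure of distributivity $\coz{b^{*}}\cup \coz{c^{*}}\subsetneq \coz a$.

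Then I would run the trichotomy from the finite proof verbatim. Since $\coz a\setminus(\coz{b^{*}}\cup \coz{c^{*}})$ is nonempty, pick a point $p$ in it; relative to $\coz b$ and $\coz c$ this point lands in one of the three regions $\coz a\setminus \coz c$, $\coz a\cap \coz b\cap \coz c$, or $\coz a\setminus \coz b$. In each case I would exhibit the maximal ideal $m_p$ and show it is not prime by producing two elements outside $m_p$ (namely $a$ and $b$, or $a$ and $c$) all of whose lower bounds in $L$ lie in $m_p$: any lower bound of $a$ and $b$ has cozero set inside $\coz a\cap \coz b$, hence inside $\coz{b^{*}}$, hence missing $p$, so it lies in $m_p$; and symmetrically for $a$ and $c$. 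This shows $X\setminus m_p=\{\,a\in L\mid p\in \coz a\,\}$ is not closed under lower bounds, so $m_p$ is not prime.

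The one place demanding genuine care — the main obstacle — is the step asserting that the supremum $b^{*}$ of all $b'$ with $\coz{b'}\subseteq \coz a\cap \coz b$ actually has $\coz{b^{*}}$ equal to the union of those cozero sets, i.e.\ that the \emph{largest} such cozero set exists and is itself a cozero set. This is exactly where completeness (not just the existence of finite joins) is essential, and it relies on the fact that $\coz$ converts the join in $L$ — including infinite joins computed in the complete lattice $L$ — into the correct set operation on the representing subbase. I would verify this compatibility carefully, noting that while arbitrary joins in a subbase need not coincide with unions, the relevant family is bounded above inside $\coz a\cap \coz b$, and membership of a point $x$ in $\coz{\bigvee_i b'_i}$ is equivalent to $\bigvee_i b'_i\notin m_x$, which by the ideal property of $m_x$ holds iff some $b'_i\notin m_x$, i.e.\ iff $x\in \coz{b'_i}$ for some $i$. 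Once this pointwise computation is in hand, the rest of the proof is an exact replay of the finite case.
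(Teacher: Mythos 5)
Your overall strategy is exactly the paper's: the paper proves the finite case by taking the \emph{largest} lower bound $b'$ of $a$ and $b$ (resp.\ $c'$ of $a$ and $c$) and running a three-case analysis on a point of $a\setminus(b'\cup c')$, and then observes that the finite argument ``depends only on the fact that every subset of $L$ has a least upper bound,'' so it transfers to complete semilattices. Your reduction, your choice of $b^{*}$ and $c^{*}$ as suprema, and your trichotomy are precisely that argument.

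However, the step you single out as the crux is justified incorrectly, and the identity you assert there is false in general. You claim $\coz\bigl(\bigvee_i b'_i\bigr)=\bigcup_i\coz b'_i$ because ``$\bigvee_i b'_i\notin m_x$ holds, by the ideal property of $m_x$, iff some $b'_i\notin m_x$.'' An ideal of a join-semilattice is closed only under \emph{finite} joins, so a maximal ideal can contain every $b'_i$ and yet fail to contain $\bigvee_i b'_i$; for instance, in the regular-open algebra of $\R$ (a complete, conjunctive join-semilattice) countably many small intervals can have join $1$ while all lying in one maximal ideal. Thus $\coz b^{*}$ may strictly contain $\bigcup_i\coz b'_i$, and the map $\coz$ need not convert infinite joins into unions. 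Fortunately, the argument does not need this. All that is required is that $b^{*}:=\bigvee\{\,b'\in L\mid b'\leq a\ \&\ b'\leq b\,\}$ is itself a common lower bound of $a$ and $b$ --- immediate, since $a$ and $b$ are upper bounds of the set being joined, so $b^{*}\leq a$ and $b^{*}\leq b$, whence $\coz b^{*}\subseteq\coz a\cap\coz b$ --- and that it dominates every common lower bound, which is the definition of supremum. With $b^{*}$ and $c^{*}$ so understood, $b^{*}\vee c^{*}\leq a$ and non-distributivity forces $b^{*}\vee c^{*}\neq a$, so $\coz a\setminus(\coz b^{*}\cup\coz c^{*})$ is nonempty and your case analysis goes through unchanged. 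Replace the pointwise-union claim with this order-theoretic observation and the proof is complete.
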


\subsection{Questions.}
\begin{enumerate}   
\item Rhodes \cite{R} mentions that an epimorph of a distributive join semilattice need not be distributive.  State useful conditions on $L$ and an up-set $U$ (possibly $\{1\}$) that are equivalent to $L/R^U$ being distributive.  
\item Is it true in general (i.e., without the completeness assumption) that a conjunctive join semilattice that fails to be distributive has a maximal ideal that is not  prime?
\item Examine the relationship between the representation theory for distributive join-semilattices and the representation theory for conjunctive join-semilattices when both hypotheses are satisfied.
\end{enumerate}

\section{Distributive Lattices}

\subsection{Relationships to the Wallman compactification}

Let $X$ be a topological space.  We call a base $\B$ for the topology of $X$ \textit{annular} if  $\emptyset$ and $X$ are in $\B$ and $\B$ a sublattice of the frame of open sets of $X$.   A \textit{Wallman base for $X$} is an annular base $\B$ such that:
$$\text{If $u\in U \in \B$, then there exists $V\in \B$ with $u\not\in V$ and $U \vee V= X$.}$$

\begin{lem}  Let $\B$ be an annular base for $X$.  Then $X$ is a Wallman base if and only if, for each $u\in X$, the ideal $m_u:=\{\,V\in \B\mid u\not\in V\,\}$ is maximal. \qed \end{lem}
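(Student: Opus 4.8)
The plan is to trade the covering statement for a statement about ideal generation, using that $\B$, being a sublattice of the open sets with $\emptyset, X\in\B$, is in particular a join-semilattice with top element $1_{\B}=X$. The first step is the routine observation that $m_u$ is always a \emph{proper} ideal of this join-semilattice: it is a down-set (a subset omitting $u$ of a set omitting $u$ still omits $u$), it is closed under $\vee$ (a union of sets omitting $u$ omits $u$), and it is proper because $u\in X$ forces $X\notin m_u$. Since $X$ is the top, the only ideal of $\B$ that contains $X$ is $\B$ itself, so ``$m_u$ is maximal'' unwinds to: every ideal properly containing $m_u$ equals $\B$.

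For the forward implication I would assume $\B$ is Wallman and let $I$ be an ideal with $m_u\subsetneq I$, aiming to show $X\in I$. Choosing $U\in I\setminus m_u$ gives $u\in U$, and the Wallman condition supplies $V\in\B$ with $u\notin V$ and $U\cup V=X$. Then $V\in m_u\subseteq I$ while $U\in I$, so $X=U\vee V\in I$ because $I$ is closed under $\vee$; hence $I=\B$ and $m_u$ is maximal.

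For the converse I would assume every $m_u$ is maximal and take $u\in U\in\B$, so $U\notin m_u$. The ideal $\langle m_u\cup\{U\}\rangle$ then properly contains $m_u$, hence equals $\B$ by maximality, so in particular $X\in\langle m_u\cup\{U\}\rangle$. By the explicit description of the generated ideal from subsection 2.2, $X\leq\bigvee X'$ for some finite $X'\subseteq m_u\cup\{U\}$. Setting $V:=\bigvee(X'\setminus\{U\})\in m_u$ (with the convention that the empty join is $\emptyset\in m_u$) yields $u\notin V$ and $U\cup V=X$, which is exactly the Wallman condition for $u\in U$.

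The proof is essentially a dictionary between the two formulations, so I expect no deep obstacle; the one place needing care is in the converse, where I must extract from ``$X$ lies in the generated ideal'' a single $V\in m_u$ with $U\cup V=X$. The key point there is that $U$ cannot be dropped from the finite join $X'$: if it were, $X'\subseteq m_u$ would force $X\in m_u$, contradicting properness of $m_u$. That observation is what guarantees the witness $V$ genuinely lies in $m_u$.
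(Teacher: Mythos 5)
Your proof is correct, and since the paper marks this lemma with \qed and supplies no argument of its own, your write-up is exactly the routine translation the authors had in mind: maximality of $m_u$ unwinds to ``every properly larger ideal contains $X$,'' and the Wallman condition supplies or extracts the witness $V$. The one delicate point --- that $U$ must occur in the finite join witnessing $X\in\langle m_u\cup\{U\}\rangle$, since otherwise $X\in m_u$ --- is precisely the point you flag and handle correctly.
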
 

A $T_0$ space has a Wallman base if and only if it satisfies the $T_1$ separation axiom; see \cite{GM}. We generally require $T_0$.  In any case, we will always be explicit about separation assumptions.  

If $\B$ is an annular base containing $\{x\}^c$ (the complement of the point $x$) for each $x\in X$, then evidently $X$ is $T_1$ (points are closed) and $\B$ is Wallman. 

\begin{lem} Let $\B$ be an annular base for a compact $T_1$ space $X$.  Then $\B$ is Wallman. \end{lem}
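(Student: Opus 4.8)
The plan is to verify the Wallman condition directly, rather than routing through the maximal-ideal reformulation supplied by the preceding lemma. Thus, given $u\in U\in\B$, I would exhibit a single $V\in\B$ with $u\notin V$ and $U\vee V=X$. The starting observation is that the complement $K:=X\setminus U$ is closed in the compact space $X$, hence is itself compact, and $u\notin K$ because $u\in U$.

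Next I would separate $u$ from each point of $K$ using the base together with the $T_1$ axiom. Since $X$ is $T_1$, the set $X\setminus\{u\}$ is open. For each $y\in K$ we have $y\neq u$, so $y\in X\setminus\{u\}$, and because $\B$ is a base there is a basic open $V_y\in\B$ with $y\in V_y\subseteq X\setminus\{u\}$; in particular $u\notin V_y$. The family $\{\,V_y\mid y\in K\,\}$ is then an open cover of the compact set $K$.

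Extracting a finite subcover $V_{y_1},\dots,V_{y_n}$ and setting $V:=V_{y_1}\vee\cdots\vee V_{y_n}$, I would conclude as follows. Because $\B$ is annular it is closed under finite joins, so $V\in\B$; since $u$ lies in none of the $V_{y_i}$, we have $u\notin V$; and since $K\subseteq V$, we get $X=U\cup K\subseteq U\vee V$, whence $U\vee V=X$. This is exactly the Wallman condition.

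I do not anticipate a serious obstacle here: the argument is the usual maneuver of covering the closed (hence compact) complement $K$ by a finite union of basic opens that all avoid $u$. The only point requiring care is that the separating open sets must be drawn from $\B$ itself, so that their finite union again lies in $\B$; this is precisely why both the base hypothesis and the sublattice (annular) hypothesis are used, while compactness of $X$ supplies the finiteness that keeps the union inside $\B$.
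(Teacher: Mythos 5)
Your proof is correct and follows essentially the same route as the paper's: cover the complement of $U$ by basic opens avoiding $u$ (which exist by $T_1$ and the base property), extract a finite subcover by compactness, and take the union inside $\B$. The only cosmetic difference is that you apply compactness to the closed set $K=X\setminus U$ while the paper applies it to the cover of $X$ by $U$ together with the $V_x$; these are the same maneuver.
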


\begin{proof} Suppose $u\in U\in \B\setminus m_u$.   We can cover $X$ with $U$ together with neighborhoods $V_x$ of each $x\not\in U$, with each $V_x$ not containing  $u$.  Finitely many $V_x$ will do, and their union, call it $V$, also misses $u$.  Moreover, $U\cup V = X$.\end{proof}

Recall that $\B$ is conjunctive if for any  $W$ and $U$ in $\B$ with $W\subsetneq U$, there is $V\in \B$ such that $V\cup W\subsetneq X=  V \cup U$.

\begin{lem} Let $X$ be a topological space.  Any Wallman base for $X$ is conjunctive.   \end{lem}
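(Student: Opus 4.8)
The plan is to verify the conjunctivity condition for $\B$ directly from the defining Wallman property, by producing the required element $V$ from a single well-chosen witness point. Recall that $\B$ being conjunctive means: for every pair $W\subsetneq U$ in $\B$ there is $V\in\B$ with $V\cup W\subsetneq X = V\cup U$. So I would fix such a pair $W\subsetneq U$ and aim to exhibit this $V$; the whole argument is a matter of selecting a point that simultaneously activates the Wallman property and certifies the required strict inclusion.

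First I would use the strict inclusion to locate a point: since $W\subsetneq U$, the set $U\setminus W$ is nonempty, so I pick $u\in U\setminus W$. This $u$ has the two features I need, namely $u\in U\in\B$ and $u\notin W$. Next I would apply the Wallman property to the incidence $u\in U$: it yields $V\in\B$ with $u\notin V$ and $U\cup V = X$ (here $\cup$ is the frame join $\vee$). I claim this $V$ already works. The equality $V\cup U = X$ is exactly the Wallman output. For the strict inclusion $V\cup W\subsetneq X$, I observe that $u\notin V$ and $u\notin W$, hence $u\notin V\cup W$, so $V\cup W\neq X$. Since $\B$ is a sublattice, $V\cup W\in\B$, so the conclusion also matches formulation (5) of conjunctivity with $w:=V\cup W$.

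The main---and essentially only---obstacle is the correct choice of the witness point $u$: it must lie in $U$ so that the Wallman property can be invoked at $u\in U$, and it must lie outside $W$ so that it certifies $V\cup W\neq X$. Both requirements are met at once by taking $u\in U\setminus W$, which is exactly where the hypothesis $W\subsetneq U$ is used. There are no further technical points: no compactness, no appeal to the lattice of ideals, and no case analysis are needed.

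As an alternative I could route the argument through Proposition \ref{conjmax} together with the earlier lemma that each $m_u=\{\,V\in\B\mid u\notin V\,\}$ is maximal when $\B$ is Wallman; one would then exhibit $m_u$ (for $u\in U\setminus W$) as a maximal ideal containing $W$ but not $U$, and conclude conjunctivity from the characterization in Proposition \ref{conjmax}. This works equally well, but the direct verification above is shorter and keeps the proof self-contained, so that is the route I would present.
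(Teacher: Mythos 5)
Your proof is correct and follows exactly the paper's argument: choose $u\in U\setminus W$, apply the Wallman property at $u\in U$ to obtain $V$ with $u\notin V$ and $U\cup V=X$, and observe that $u\notin V\cup W$ gives the strict inclusion. No differences worth noting.
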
 

\begin{proof}  Given $W\subsetneq U$ in $\B$, select a point $u\in U$ that does not belong to $W$.  Using the Wallman assumption, select $V\in \B$ with $U \cup V= X$ and $u\not\in V$.   Then $V\cup W\subsetneq X=  V \cup U$. \end{proof}

The converse is not true.  A conjunctive annular base for the topology of $\R$ need not be a a Wallman base.  This is shown by the second example below.  The first example does not accomplish this goal, but we include it because it is useful in understanding the second

\begin{exmp} In this example, let $\B$ be the sublattice of the topology of $\R$ containing $\R$ and $\emptyset$ and generated by the finite open intervals as well as the sets of the form $E\cup U_0$, where $E=(a, \infty)$ or $E=(-\infty, b)$ for some $a,b\in \R$, and $U_0$ is an open interval containing $0$.  The interval $U= (-1, \infty)$ is in $ \B\setminus m_0$, but there is no $V\in m_0$ such that $V\cup U = \R$.  So this is not a Wallman base. This example falls short of our goal, however, because \textit{$\B$ is not conjunctive.}  Let $W = (-1,0)\cup(0,1)$ and let $U = (-1,1)$.  If $V\cup U = \R$, then $0\in V$, so $V\cup W = \R$.  \end{exmp} 

\begin{exmp}\label{exnw} We shall modify the previous example by demanding that the finite open intervals in $\B$ must either contain $0$ or not have $0$ as an endpoint.  Thus, if $0\not\in W\in \B$, then $0$ has a neighborhood in $\B$ that misses $W$.  

\begin{prop} $\B$ of Example \ref{exnw} is  not Wallman, yet it is conjunctive.\end{prop}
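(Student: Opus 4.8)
The plan is to establish the two claims separately. We must show that $\B$ of Example \ref{exnw} is not Wallman, and that it is nonetheless conjunctive. By the lemma characterizing Wallman bases via maximal ideals, $\B$ fails to be Wallman precisely when some $m_u$ is not a maximal ideal; so for the first claim I would exhibit a specific point $u$ and a proper ideal strictly larger than $m_u$. The natural candidate is $u=0$, since the construction singles out $0$ by forbidding finite intervals with $0$ as an endpoint. I would argue that $U=(-1,\infty)\in\B\setminus m_0$, just as in the previous example, and check that no $V\in\B$ with $0\notin V$ satisfies $U\cup V=\R$: any such $V$ would have to cover $(-\infty,-1]$, forcing $V$ to contain an unbounded set of the form $(-\infty,b)\cup U_0$ with $U_0$ a neighborhood of $0$, contradicting $0\notin V$. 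This shows $m_0$ is not maximal (it is properly contained in a proper ideal), so $\B$ is not Wallman.

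For conjunctivity, I would use the characterization recalled just before the Wallman lemmas: $\B$ is conjunctive if for any $W\subsetneq U$ in $\B$ there is $V\in\B$ with $V\cup W\subsetneq\R=V\cup U$. So I would fix $W\subsetneq U$, pick a point $u\in U\setminus W$, and produce an appropriate $V$. The essential new feature of this example is that whenever $0\notin W$, the point $0$ has a $\B$-neighborhood missing $W$; this is what repairs the defect in the previous (non-conjunctive) example. I would split into cases according to whether the witnessing point $u$ can be taken different from $0$ or is forced to be $0$. When $u\neq 0$, a finite open interval around $u$ avoiding $0$ as an endpoint lies in $\B$, and one can build $V$ covering the complement of $U$ while missing $u$, much as in the Wallman lemma for compact spaces, using finitely many basic neighborhoods. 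The delicate case is when the only point of $U\setminus W$ available is near $0$ or is $0$ itself; here I would invoke the modified hypothesis to obtain a neighborhood of $0$ missing $W$, and combine it with a cover of the unbounded ends to get $V\cup U=\R$ while keeping $V\cup W$ proper.

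The main obstacle I expect is the conjunctivity verification, specifically controlling the behavior at $0$ and at the two unbounded ends simultaneously. Unlike the compact case, $\R$ is not compact, so one cannot simply extract a finite subcover of the complement of $U$; I must use the explicit structure of $\B$ (finite intervals together with the sets $E\cup U_0$) to cover the unbounded tails with a single basic set while still excluding the chosen witness point. The earlier non-example shows exactly where this can go wrong: with $W=(-1,0)\cup(0,1)$ and $U=(-1,1)$, any $V$ completing $U$ to $\R$ was forced through $0$ and hence also completed $W$. I would therefore check carefully that the modified definition of $\B$ genuinely prevents this, i.e.\ that whenever $0\notin W$ one truly has a basic neighborhood of $0$ disjoint from $W$, and that this neighborhood can be chosen inside the $V$ we construct without inadvertently covering the missing point of $U\setminus W$. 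Once the separation at $0$ is secured, the unbounded tails are handled by choosing $V$ of the form $((-\infty,b)\cup U_0)\cup\cdots$ or a union of a finite interval with such a set, and the proper containment $V\cup W\subsetneq\R$ follows from the existence of the omitted witness point.
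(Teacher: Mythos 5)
Your argument that $\B$ is not Wallman is complete and is the same as the paper's: $U=(-1,\infty)\in\B\setminus m_0$, and any $V\in\B$ with $V\cup U=\R$ must cover the unbounded set $(-\infty,-1]$, hence must contain a component of the form $(-\infty,b)\cup U_0$ with $0\in U_0$, so $0\in V$ and $V\notin m_0$.

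For conjunctivity your strategy is also the paper's---cover $\R\setminus U$ by an explicit $V\in\B$ built from the interval structure of $U$, attach a small interval $(-\epsilon,\epsilon)$ to any unbounded piece so as to stay inside $\B$, and shrink $\epsilon$ so that $V\cup W$ still misses a point of $U\setminus W$---but as written it is a plan rather than a proof. The sentence ``one can build $V$ covering the complement of $U$ while missing $u$'' is precisely where all the work lies: the paper spends the bulk of its proof writing $U$ as a union of maximal intervals $(a_1,b_1)\cup\cdots\cup(a_n,b_n)$ satisfying the endpoint conditions of the example and defining $V(\epsilon,U)$ explicitly in four cases according to whether $a_1=-\infty$ and/or $b_n=\infty$; one must then check both that $V(\epsilon,U)\in\B$ (this is why $(-\epsilon,\epsilon)$ is adjoined whenever $V$ has an unbounded component) and that $\epsilon$ can be taken small enough to exclude a witness. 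Until that is written down, the conjunctivity half is not established. One correction to your case split: the witness point is never \emph{forced} to be $0$. If $0\in U\setminus W$ then, by the modification, $W$ misses an entire interval about $0$ while the open set $U$ contains one, so a punctured neighborhood of $0$ lies in $U\setminus W$ and you may always choose $u\neq 0$; taking $\epsilon<|u|$ then keeps $u$ out of the obligatory component $(-\epsilon,\epsilon)$ of $V$. This observation, which you only gesture at, is exactly the point at which the modified example succeeds where the first one fails, and making it explicit would both repair and simplify your case analysis.
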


\begin{proof}  As in the first example, the interval $U= (-1, \infty)$ is in $ \B\setminus m_0$, but there is no $V\in m_0$ such that $V\cup U = \R$, so this is not a Wallman base. To show that $\B$ is conjunctive, suppose $W, U\in \B$ and $W\subsetneq U$  ($W$ is a proper subset of $U$).  We shall show there is $V\in \B$ such that $V\cup W\subsetneq \R$ and $V\cup U = \R$.  Suppose $U = (a_1, b_1)\cup(a_2, b_2)\cup \cdots\cup (a_n, b_n)$ and $W = (a'_1, b'_1)\cup(a'_2, b'_2)\cup \cdots\cup (a'_{n'}, b'_{n'})$.  Here we have\begin{enumerate}
\item[$(i)$] $-\infty\leq a_1<b_1\leq a_2<b_2\leq \cdots<b_n\leq \infty$;
\item[$(ii)$] for all $i$, $a_i\not=0$ and $b_i\not= 0$;
\item[$(iii)$] if $a_1 = -\infty$ or $b_n=\infty$, then for some $i$, $a_i<0<b_i$.
\end{enumerate}
The same conditions apply to $W$.  Moreover, each interval in the decomposition of $W$ is contained in some interval in the decomposition of $U$.  We might, for example, have $(a_i,b_i)\supseteq(a'_j,b'_j)\cup(a'_{j+1},b'_{j+1})$, with $a_i = a'_j$, $b'_j= a'_{j+1}$ and $b_i = b'_{j+1}$.  In other words $(a_i, b_i)\setminus W$ is a single point, $b'_j$.   Now for any $\epsilon>0$, let $V:=V(\epsilon,U)$, be defined as follows:
\begin{enumerate}
\item $a_1=-\infty$ and $b_n = \infty$.  Then $V$ will be the union $$(b_1-\epsilon, a_2+\epsilon)\cup(b_2-\epsilon, a_3+\epsilon)\cup\cdots\cup(b_{m-1}-\epsilon, a_n+\epsilon).$$
\item $a_1>-\infty$ and $b_n = \infty$.  Then $V$ will be the union $$(-\infty, a_1+\epsilon)\cup(b_1-\epsilon, a_2+\epsilon)\cup(b_2-\epsilon, a_3+\epsilon)\cup\cdots\cup(b_{m-1}-\epsilon, a_n+\epsilon)\cup(-\epsilon,\epsilon).$$
\item $a_1=-\infty$ and $b_n < \infty$.  Then $V$ will be the union $$(b_1-\epsilon, a_2+\epsilon)\cup(b_2-\epsilon, a_3+\epsilon)\cup\cdots\cup(b_{m-1}-\epsilon, a_n+\epsilon), (b_n-\epsilon, \infty)\cup(-\epsilon,\epsilon).$$
\item $a_1>-\infty$ and $b_n < \infty$.  Then $V$ will be the union $$(-\epsilon, \epsilon)\cup(-\infty, a_1+\epsilon)\cup(b_1-\epsilon, a_2+\epsilon)\cup\cdots\cup(b_{m-1}-\epsilon, a_n+\epsilon)\cup (b_n-\epsilon, \infty).$$
\end{enumerate}
Since $W$ is a proper subset of $U$, some of the endpoints $\{a'_j, b'_j\}$ defining the intervals of $W$ are in the interior of the intervals of $U$, or $W$ is the union of a subset of the intervals defining $U$.  Moreover, if $W$ does not contain $0$, then it misses an interval about $0$.  Thus, in any case, we may choose $\epsilon>0$ so small that no endpoints of $W$-intervals that are not endpoints of $U$-intervals belong to $V(\epsilon, U)$.  This assures that $V(\epsilon, U)\cup W\not=\R$.  By construction  $V(\epsilon, U)\cup U=\R$.
\end{proof}

Observe that $\{\,U\subseteq\R\mid \text{$U$ open and $0\not\in U$}\,\}$ is a maximal ideal in the full topology of $\R$.   In contrast, in the example above, $m_0:=\{\,U\in \B\mid 0\not\in U\,\}$ fails to be a maximal ideal of $\B$.    It is contained in $I_{fin}$, the ideal of all finite unions of finite open intervals with non-zero endpoints.   Moreover, $I_{fin}$ itself is not maximal.  It is contained in the ideal $m_{0^+}\subseteq \B$ whose elements are all finite unions of intervals of the form $(a, b)$ where $0\not=a\in \{-\infty\}\cup\R$ and $0\not=b\in \R$, with the proviso that any element of $m_{0^+}$ containing an interval of the form $(-\infty, b)$ must also contain an interval of the form $(-\epsilon, \epsilon)$, $\epsilon>0$.  We also have the ideal $m_{0^-}$, which is defined analogously.   Both these ideals are maximal.   What we have here, then, can be viewed as the real line with two additional points added, call them $0^+$ and $0^-$.  Since $m_0\subseteq m_{0^+}$ and $m_0\subseteq m_{0^-}$, each of the new points belongs to the closure of $0$.  A neighborhood base for $0^+$ consists of the elements in the complement of $m_{0^+}$, namely all finite unions of intervals that contain a set of the form $(-\epsilon, \epsilon)\cup (x,\infty)$, and analogously for $0^-$.  Each has a neighborhood that does not contain the other, but they do not have disjoint neighborhoods.\end{exmp}

 \begin{prop}[\cite{Jbook} IV.2.4] If $\B$ is a Wallman base for a $T_1$-space $X$, then $\eta_B: X\to \SpecM \B$ is an embedding with dense image.
\end{prop}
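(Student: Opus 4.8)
The plan is to verify, in turn, that $\eta_B$ is well defined into $\SpecM\B$, that it is an injective topological embedding, and that its image is dense; the last point will be the crux. Recall that $\eta_B(x)=m_x:=\{\,a\in\B\mid x\notin a\,\}$. Since $\B$ is a Wallman base, the lemma characterizing Wallman bases shows that each $m_x$ is a \emph{maximal} ideal, so $\eta_B$ indeed maps $X$ into $\maxx\B=\SpecM\B$. (Because every Wallman base is conjunctive, the Representation Theorem identifies $\SpecM\B$ as a compact $T_1$ space---the Wallman compactification---though I will need only the definition of its topology, namely that the sets $\coz a$ form a subbase.)

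For the embedding, the two computations I would record first are
$$\eta_B^{-1}(\coz a)=a \qquad\text{and}\qquad \coz a\cap\eta_B(X)=\eta_B(a),$$
both immediate from $a\notin m_x\iff x\in a$. The first shows that the preimage of each subbasic open is a basic open of $X$, hence $\eta_B$ is continuous. For the converse I would use that $\B$ is annular: the relatively open sets $\coz a\cap\eta_B(X)$ are closed under finite intersection, since $\eta_B(a)\cap\eta_B(b)=\eta_B(a\wedge b)$, so they form a base for the subspace topology on $\eta_B(X)$ which corresponds, via $\eta_B$, to the base $\B$ of $X$. Thus $\eta_B$ carries $\B$ onto a base of its image and is therefore open onto its image. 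Injectivity follows from the $T_1$ (in fact $T_0$) hypothesis together with $\B$ being a base: distinct points $x,y$ are separated by some $a\in\B$ with $x\in a\not\ni y$, so $a\in m_y\setminus m_x$ and $m_x\neq m_y$. A continuous, open bijection onto its image is a homeomorphism, so $\eta_B$ is an embedding.

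The substantive step is density, and here the plan is \emph{not} to reduce to subbasic opens but to argue directly on a basic open. Let $m\in\SpecM\B$ and let $\coz a_1\cap\cdots\cap\coz a_n$ be a basic neighborhood of $m$, so $a_i\notin m$ for each $i$. I claim $a_1\wedge\cdots\wedge a_n\neq\emptyset$ in $X$. Suppose instead $\bigcap_i a_i=\emptyset$. Since $m$ is maximal and $a_i\notin m$, there is $u_i\in m$ with $u_i\vee a_i=X$, whence $X\setminus a_i\subseteq u_i$. Then
$$X=\bigcup_i (X\setminus a_i)\subseteq u_1\vee\cdots\vee u_n\in m,$$
so $1_\B\in m$, contradicting that $m$ is proper. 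Hence some $x$ lies in every $a_i$, and then $m_x\in\coz a_1\cap\cdots\cap\coz a_n$, so the neighborhood meets $\eta_B(X)$.

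I expect the main obstacle to be exactly this density argument. The temptation is to collapse $\coz a_1\cap\cdots\cap\coz a_n$ to $\coz(a_1\wedge\cdots\wedge a_n)$, but this fails because the maximal ideals of $\B$ need not be prime, so $\coz$ does not preserve meets and the basic open can be strictly larger than the cozero set of the meet. The argument above sidesteps this: it never uses primeness, only that maximality supplies, for each $a_i\notin m$, a supercomplement $u_i\in m$, and that the finite join of these supercomplements stays in the proper ideal $m$ unless the $a_i$ already have a common point in $X$.
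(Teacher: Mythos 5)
Your proof is correct, and it is worth noting that the paper itself offers no proof of this proposition: it is quoted from Johnstone (\cite{Jbook}, IV.2.4), and the paper only adds the remark that the join-semilattice construction of $\SpecM\B$ coincides with the subspace of maximal ideals of the classical prime spectrum $\Spec\B$. Johnstone's route thus passes through the distributive-lattice/prime-ideal machinery, whereas your argument stays entirely inside the paper's join-semilattice framework: you use only that each $m_x$ is maximal (the paper's characterization of Wallman bases), that $\coz$ is a subbase, and, for density, that maximality supplies a supercomplement $u_i\in m$ of each $a_i\notin m$ whose finite join must stay below $1_\B$. That density argument is the right one and is the more portable of the two, since it never invokes primeness. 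One small correction to your closing commentary: in this particular setting the worry you raise does not actually arise. An annular base is by definition a sublattice of the topology, hence a distributive lattice, and by Lemma \ref{djl2} (applied with $F=\{1\}$) every maximal ideal of $\B$ \emph{is} prime; consequently $\coz(a_1\wedge\cdots\wedge a_n)=\coz a_1\cap\cdots\cap\coz a_n$ here, and the ``collapse'' you warn against would in fact be legitimate (one would then conclude directly that a nonempty basic open is $\coz c$ for some $c\neq\emptyset$, hence meets $\eta_B(X)$). The failure of primeness you describe is a genuine phenomenon for conjunctive join-semilattices that are not distributive, but not for Wallman (or any annular) bases. This does not affect the validity of your proof; it only means your stated reason for avoiding the shortcut is misplaced, while the argument you give is sound and proves the density claim without appealing to distributivity at all.
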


In the proposition as presented in \cite{Jbook}, $\SpecM \B$ refers to the subspace the prime spectrum $\Spec \B$ consisting of the maximal ideals.  If we treat $\B$ as a join semilattice and construct $\SpecM \B$ as above, then we get the same space.   The reason is that the definition of ideal we used in the context of join semilattices is the same as the definition used for distributive lattices.  The same is true of maximal ideals, and in both settings the definition of the topology is the same.

\subsection{Distributive lattices generated by join semilattices.}\label{join-generation}

For any join-semilattice $L$ with $1$, there is a $1$-$\vee$-preserving morphism $d_L:L\to dL$ that is universal to distributive lattices with $1$.  In this subsection, we consider this construction when $L$ is conjunctive.  By the universal mapping property, there is a $1$-$\vee$-$\wedge$-preserving surjection $\overline{w_L}:dL\to wL$, where $w_L$ is the distributive sub-lattice of the topology of $\SpecM L$ that is generated by $\{\,\coz a\mid a\in L\,\}$.  We provide an example showing that $\overline{w_L}$ may fail to be injective, and we prove that in general $wL \cong dL/R^1$.

\textit{Throughout the remainder of this section, all join-semilattices and all distributive lattices have $1$, and all morphisms preserve $1$.}   Let $L$ be a join semilattice.  We say that $dL$ is a \textit{free distributive lattice over $L$} if $dL$ is a distributive lattice, and there is a $1$-$\vee$-morphism $d_L:L\to dL$ such that for any  $1$-$\vee$-morphism $f:L\to B$, with $B$ a distributive lattice, there is a unique $1$-$\vee$-$\wedge$-morphism $\overline{f}:dL\to B$ such that $f = \overline{f}\,d_L$.  The universal mapping property guarantees uniqueness up to unique isomorphism, so we speak of \textit{the} free distributive lattice with $1$ over $L$.  The existence of $dL$ follows from the fact that distributive lattices form a varietal category.  In order to examine examples, we provide a concrete description.

We may construct $dL$ by imitating the construction in \cite{Jbook}.  Let $\mathcal{P}L$ denote the power set of $L$.  Define $\delta_L: L\to \mathcal{P}L$ by $\delta_L(a) := \up a$.  Let $dL$ to be the sublattice of $\mathcal{P}L$ genertated by the image of $L$.  We  endow $dL$ with the opposite of the natural order, so for subsets $U, V\subseteq L$, $U\leq V$ means $V\subseteq U$,  $U\vee V := U\cap V$ and $U\wedge V := U\cup V$.   (In other words, we are viewing $dL$ as a sub-poset of $(\mathcal{P}L)^{op}$.)  Then $d_L$ is a $1$-$\vee$ morphism.   A slight modification of the proof of   \cite{Jbook}, II.1.2 shows that $\delta_L$ has the universal mapping property required in the definition of the free distributive lattice over $L$.

Below, we shall use  an equivalent representation $dL$ as a sublattice of $\mathcal{P}L$ with the natural order, $U\leq V\;\Leftrightarrow\;U\subseteq V$.  We simply define $d_L(b)$, $b\in L$, to be the complement of $\up b$, i.e.,
\begin{equation}\label{dLdef}d_L(b):= \{\, y\in L\mid b\not\leq  y\,\}\in \mathcal{P}L.\end{equation}  If $L$ is conjunctive, $b\not\leq y$ if and only if $y$ is contained in a maximal ideal missing $b$; therefore, $d_L(b)$  is the set of all $y\in L$ such that $\widehat y$ vanishes at some element of $\coz b$.  The join in $dL$ is set-theoretic union in $\mathcal{P}L$.   As required, $d_L$ is join-preserving:
\begin{equation}d_L(a)\vee d_L(b) = \{\, y\in L\mid a\not\leq  y\;\text{or}\; b\not\leq  y,\}= \{\, y\in L\mid a\vee b\not\leq  y\,\}.\end{equation}
Meets in $dL$ are described as follows: 
\begin{align}\label{meet}d_L(b_1)\wedge\cdots\wedge d_L(b_n) &= \{\,y\in L\mid b_1\not\leq y\,\}\cap\cdots\cap \{\,y\in L\mid b_n\not\leq y\,\} \\
&=\{\,y\in L\mid b_1\not\leq y\;\&\;\cdots\;\&\;b_n\not\leq y\,\}
\end{align} 
If we assume $L$ is conjunctive,  $d_L(b_1)\wedge\cdots\wedge d_L(b_n)$ is the set of $y\in L$ such that $\widehat y$ vanishes at some point of $\coz b_i$ for each $i=1,\ldots, n$. Note that it is not required that $\widehat y$ vanish at some point of the intersection of all these cozero sets.

Now, let $L$ be a conjunctive join semilattice and let $wL$ denote the sublattice of the topology of $\SpecM L$ that is generated by $\{\,\coz a\mid a\in L\,\}$.  For compatibility with other notation, we used the notation $w_L:L\to wL$, with $w_L(a) := \coz a$.
Thus,
\begin{align}w_L(b_1)\wedge\cdots\wedge w_L(b_n) &=  \coz b_1\cap\cdots\cap  \coz b_n \\
&=\{\,m\in \SpecM\mid b_1\not\in m\;\&\cdots\&\;b_n\not\in m\,\}.
\end{align} 

By the universal mapping property of $d_L$, there is a unique  $1$-$\vee$-$\wedge$-morphism 
$\overline{w_L}:dL\to wL$ such that 
$\overline{w_L}\,d_L = w_L$.   Evidently, we must have 
$$\overline{w_L}\,\bigl(\{\,y\in L\mid b_1\not\leq y\;\&\;\cdots\;\&\;b_n\not\leq y\,\}\bigr) = \{\,m\mid b_1\not\in m\;\&\cdots\&\;b_n\not\in m\,\}.$$   This need not be an injective map, as we show below by example. 

\begin{exmp}Let $L$ be any set containing $1$, with join operation defined by $a\vee b = 1$ for any distinct elements $a$ and $b$ of $L$.   This is clearly conjunctive.  We have 
$d_L(b)= L\setminus\{1,b\}$, and 
$d_L(b_1)\wedge \cdots \wedge d_L(b_n) = L\setminus \{1, b_1,\ldots b_n\}$.   The maximal ideals of $L$ are the singletons $\{b\}$, with $b\not=1$.  We have $w_L(b)  = \coz b = \{\{x\}\in \SpecM L\mid x\not= b\,\}$.  The map $\overline{w_L}$ is given by 
$$\overline{w_L}\bigl(L\setminus\{1, b_1,\ldots b_n\}\bigr) = \bigl(\SpecM L\bigr)\setminus\{\,\{b_1\},\ldots,\{b_n\}\,\}.$$  In this example, $\overline{w_L}$ is injective.
\end{exmp}

Let $L$ be a finite conjunctive join semilattice, and let  $X=\SpecM L$.  We view $L$ as a sub-join-semilattice of $\mathcal{P}X$.  Since $X$ is finite and it's topology is $T_1$, for each $x\in X$, $\{x\}^c=X\setminus \{x\} \in L$. The maximal ideal $x$  is the downset of  $\{x\}^c$.  We have that $\mathcal{W}L = wL =\mathcal{P} X$.  
 
 \begin{exmp} Suppose $X=\{a,b,c\}$ and $L$ is the join semilattice that consists of the non-empty elements of $\mathcal{P} X$.  We write the subsets of $X$ as strings: $a := \{a\}$, $ab:=\{a,b\}$, etc.     Thus, $L = \{a,b,c,ab,ac, bc, abc\}$, and  $m_a:=\down bc=\{b,c,bc\}$.   
The following diagrams show the structure of $dL$.  To avoid clutter, we use $i$, $j$, and $k$ to represent $a$, $b$ and $c$ taken in any order.  The diagram on the left represents the elements as meets of the generators, while the one on the right shows the corresponding elements as downsets of $L$.   Each node on the right labeled with $i$, $j$, and/or $k$ represents three distinct elements of $dL$.  Accordingly, the cardinality of $d_L$ is 18.  Since the cardinality of $wL$ is $8$, evidently, $\overline{w_L}$ is not injective.

\bigskip
{\scriptsize
\hfill
\begin{tikzcd}[tips=false,column sep=.1em,row sep=1em]
    &d_L(ijk)\ar{d}\\
    &d_L(ij)\ar{d}\\
    &d_L(ij)\wedge d_L(ik)\ar{dl} \ar{dr}\\
    d_L(i)\ar{dr} &&d_L(ij)\wedge d_L(ik)\wedge   d_L(jk)\ar{dl}\\
    &d_L(i)\wedge d_L(jk)\ar{d}\\
    &d_L(i)\wedge d_L(j)\ar{d}\\
   & \emptyset
\end{tikzcd}
\hfill
\begin{tikzcd}[tips=false,column sep=.1em,row sep=1em]
    &\{ab, ac, bc, a,b,c\}\ar{d}\\
    &\{ik,jk,i,j,k\}\ar{d}\\
    &\{jk,i,j,k\}\ar{dl} \ar{dr}\\
    \{jk, j,k,\}\ar{dr} &&\{a,b,c\}\ar{dl}\\
    &\{j,k\}\ar{d}\\
    &\{k\}\ar{d}\\
   & \emptyset
\end{tikzcd}
\hfill
}
\end{exmp}

\begin{prop}  Let $L$ be a conjunctive join-semilattice.   The congruence on $dL$ induced by $\overline{w_L}:dL\to wL$ is $R^1(dL)$, and hence $wL\cong dL/R^1$.  \end{prop}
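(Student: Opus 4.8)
The plan is to realize $\overline{w_L}$ as a surjection onto a \emph{conjunctive} lattice and then invoke Lemma \ref{piercelemma}. First I would check that $wL$ is conjunctive. By construction $wL$ is a sublattice of the topology of $\SpecM L$ that contains every $\coz a$, so it is a finite-union-closed subbase for $\mathcal{W}L$; and $\SpecM L$ is a compact $T_1$ space by Theorem \ref{repthm}$(i)$. Part $(ii)$ of that same theorem then gives that $wL$ is conjunctive. Since $\overline{w_L}$ is a lattice morphism with $\overline{w_L}(d_L(a)) = w_L(a) = \coz a$, its image is a sublattice containing every $\coz a$, hence all of $wL$; so $\overline{w_L}$ is a surjective $1$-$\vee$-morphism. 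Lemma \ref{piercelemma} therefore identifies the kernel congruence of $\overline{w_L}$ with $R^V$, where $V:=\overline{w_L}^{-1}(1_{wL}) = \overline{w_L}^{-1}(\maxx L)$, and gives $wL\cong dL/R^V$.

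It remains to show $V=\{1_{dL}\}$, for then $V=\up 1_{dL}$ and $R^V = R^{\up 1_{dL}} = R^1(dL)$, completing the proof. Here I would use the explicit model of $dL$ as the sublattice of $\mathcal{P}L$ generated by the sets $d_L(b)=\{\,y\in L\mid b\not\leq y\,\}$ from \eqref{dLdef}. Since $dL$ is distributive, every $z\in dL$ is a finite union of finite intersections of generators, $z=\bigcup_i\bigcap_j d_L(b_{ij})$; and because $\overline{w_L}$ preserves $\vee$ and $\wedge$, $\overline{w_L}(z)=\bigcup_i\bigcap_j\coz b_{ij}$. Unwinding the definitions, $y\in z \iff \exists i\,\forall j:\ b_{ij}\not\leq y$, while $m\in\overline{w_L}(z)\iff \exists i\,\forall j:\ b_{ij}\notin m$. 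Recalling that $1_{dL}=d_L(1_L)=L\setminus\{1_L\}$, the goal becomes the equivalence
$$\bigl(\forall y\neq 1_L:\ y\in z\bigr)\quad\Longleftrightarrow\quad \bigl(\forall m\in\maxx L:\ m\in\overline{w_L}(z)\bigr).$$

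For the forward implication, given $m\in\maxx L$ I would argue by contradiction: if for every $i$ some $b_{i,j(i)}\in m$, then $y_0:=\bigvee_i b_{i,j(i)}\in m$ (as $m$ is join-closed), so $y_0\neq 1_L$; but then $y_0\in z$ forces some $i_0$ with $b_{i_0,j}\not\leq y_0$ for all $j$, contradicting $b_{i_0,j(i_0)}\leq y_0$. For the converse, given $y\neq 1_L$, the principal ideal $\down y$ is proper, so by Lemma \ref{lemlem} it extends to a maximal ideal $m\ni y$; the hypothesis yields an $i$ with all $b_{ij}\notin m$, and since $\down y\subseteq m$ this gives $b_{ij}\not\leq y$ for all $j$, i.e.\ $y\in z$. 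This establishes $V=\{1_{dL}\}$, and hence $wL\cong dL/R^1$.

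The main obstacle is precisely the identification $V=\{1_{dL}\}$: that a finite union of intersections of cozero sets equals all of $\maxx L$ \emph{exactly} when the corresponding element of $dL$ is the whole of $L\setminus\{1_L\}$. Unlike the bookkeeping in the rest of the argument, this is not formal nonsense—it is where the maximal-ideal structure and the conjunctivity of $L$ genuinely enter, through the ability to enlarge $\down y$ to a maximal ideal and through the join-closure of maximal ideals, and reconciling the two quantifier-swapped conditions is the crux.
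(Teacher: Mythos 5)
Your proof is correct, and its skeleton is the same as the paper's: show $wL$ is conjunctive, invoke Lemma \ref{piercelemma} to identify the kernel congruence of $\overline{w_L}$ with $R^V$ for $V=\overline{w_L}^{-1}(1_{wL})$, and reduce everything to the single nontrivial claim $V=\{1_{dL}\}$. Where you diverge is in how that claim is verified. The paper exploits the fact that the generators $d_L(b)$ are closed under joins (since $d_L$ is a $\vee$-morphism on a join-semilattice), so every $q\in dL$ is a \emph{meet} of generators $q=\bigwedge_{i=1}^n d_L(b_i)$; if $q\neq 1_{dL}$ then some $b_i\neq 1_L$, and since $b_i$ lies in some maximal ideal, $\overline{w_L}(q)\leq\coz b_i\subsetneq\maxx L$ --- two lines, one direction only (the other direction being automatic because $\overline{w_L}$ preserves $1$). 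You instead work with the generic join-of-meets normal form $z=\bigcup_i\bigcap_j d_L(b_{ij})$ and carry out a two-sided quantifier argument, using the join-closure of maximal ideals in one direction and the extension of $\down y$ to a maximal ideal (Lemma \ref{lemlem}) in the other. Both arguments are sound; yours is longer but more self-contained in that it does not require noticing the meet-generation of $dL$ by the image of $d_L$, and it makes explicit exactly where maximality enters. Note that your ``forward implication'' is redundant --- $z=1_{dL}$ forces $\overline{w_L}(z)=1_{wL}$ simply because $\overline{w_L}$ is a $1$-preserving morphism --- so only your converse direction carries content. One small point worth adding to either argument: to conclude that the \emph{lattice} congruence induced by $\overline{w_L}$ is $R^1(dL)$ (and not merely that they agree as sets of pairs identified by a $\vee$-congruence), the paper appeals to Proposition \ref{distpierce}, which guarantees that $R^U$ is a lattice congruence when $U$ is a filter; you omit this, though it does not affect the set-theoretic identification.
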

\begin{proof}  First, by the same arguments used in the proof of Lemma \ref{replemma}, we have that  $\SpecM wL = \SpecM L$.  By Lemma \ref{conjmax}, $wL$ is conjunctive.  We now show that $\overline{w_L}^{-1}(1_{WL}) =\{1_{dL}\}$.  Suppose $q\in dL\setminus\{1\}$.  We show that $\overline{w_L}(q) \not= 1$.  Since $d_L:L\to dL$ is a $1$-$\vee$-morphism and $dL$ is distributive,  $q =\bigvee_{i=1}^n d_L(b_i)$, with $b_i\in L\setminus \{1\}$.  Thus, $q\leq d_L(b_i)$, so $\overline{w_L}(q)\leq  \overline{w_L}d_L(b_i) = \coz b_i<1$.  In light of Proposition \ref{distpierce}, Lemma \ref{piercelemma} applies to distributive lattices, finishing the proof.  \end{proof} 

Of course, since $wL$ is a distributive lattice with $1$, all the elements of $\SpecM wL$ are prime, and $\SpecM wL$ is simply the subspace of the classical spectral space $\Spec wL$ consisting of the maximal ideals.   If $L$ is a conjunctive distributive lattice, then  $\SpecM L$ is dense in $\Spec L$, so in this case, $L = wL$.

The following example shows that $w$ is not functorial.  Let  $L=\mathcal P\{a,b,c\} = \{0=\emptyset, a,b,c,ab,ac,bc,1=abc\}$ and let $M=\mathcal P\{d\} = \{0,1\}$.  Define $\phi:L\to M$ by $\phi(0) = \phi(a) = 0$ and $\phi(x)=1$ for $x\in L\setminus \{0,a\}$.  This is indeed a $\vee$-morphism, because $\{0,a\}$ is an ideal.   In this case, $wL = L$ and $wM=M$, so $w\phi$, if it existed, would be $\phi$.  But $\phi$ is not a $\wedge$-morphism, because $ab\wedge ac = a$, though $\phi(ab)\wedge \phi(ac) = 1$ and $\phi(a) = 0$.

\end{document}